\documentclass[11pt,a4paper]{article}
\usepackage[T1]{fontenc}
\IfFileExists{lmodern.sty}{\usepackage{lmodern}}{}
\usepackage[margin=27mm,headheight=15pt]{geometry}
\usepackage{amsmath,amssymb,amsthm}
\usepackage{fancyhdr,needspace}
\usepackage[pdfborder={0 0 0},hyperfootnotes=false]{hyperref}
\usepackage{tikz-cd}
\hypersetup{pdftitle={Composition factors and finite representation type},pdfauthor={Liang Chen, Zhenxian Chen, Yongyi Wen, Weiheng Xia},pdfkeywords={Artin algebras, composition factors, finite representation type, representation embeddings, graded modules}}
\newtheorem{theorem}{Theorem}[section]
\newtheorem{lemma}[theorem]{Lemma}
\newtheorem{proposition}[theorem]{Proposition}
\theoremstyle{remark}

\theoremstyle{definition}
\newtheorem*{definition}{Definition}
\newtheorem*{conjecture}{Conjecture}
\theoremstyle{plain}
\newcommand{\X}{\mathfrak X}
\DeclareMathOperator{\rad}{rad}
\DeclareMathOperator{\soc}{soc}
\DeclareMathOperator{\Hom}{Hom}
\DeclareMathOperator{\End}{End}
\DeclareMathOperator{\Ext}{Ext}

\DeclareMathOperator{\ann}{ann}
\DeclareMathOperator{\im}{Im}
\DeclareMathOperator{\topm}{top}
\DeclareMathOperator{\length}{\ell}
\DeclareMathOperator{\coker}{Coker}
\newcommand{\D}{\operatorname{D}}
\newcommand{\modu}{\text{-}\mathrm{mod}}

\title{\bfseries Composition factors and finite representation type}
\author{Liang Chen \quad Zhenxian Chen \quad Yongyi Wen$^{*}$ \quad Weiheng Xia}
\date{}
\begin{document}
\maketitle
\begingroup
\renewcommand{\thefootnote}{}
\footnotetext{\textsuperscript{*}\,Corresponding author. Email:
\href{mailto:2240501002@cnu.edu.cn}{2240501002@cnu.edu.cn}.
\par\noindent\hspace*{1.8em}%
2020 Mathematics Subject Classification: Primary 16G60; Secondary 16G10, 16W50.
\par\noindent\hspace*{1.8em}%
Keywords: Composition factors; Finite representation type;
Representation embeddings; Graded modules.}
\endgroup
\begin{abstract}
We prove that an Artin algebra has finite representation type if its
indecomposable finitely generated modules are determined, up to
isomorphism, by their composition factors.
This gives an affirmative answer to the conjecture on composition factors
and finite representation type recorded by Auslander, Reiten and
Smal{\o}.
\end{abstract}

\tableofcontents

\section{Introduction}

Let $A$ be an Artin algebra, and let $S_1,\ldots,S_s$ be representatives
of the isomorphism classes of simple $A$-modules. For a finitely generated
$A$-module $M$, the vector
\[
 [M]=\bigl([M:S_1],\ldots,[M:S_s]\bigr)
\]
records the multiplicities of the simple modules in a composition series
of $M$. It represents the class of $M$ in the Grothendieck group
$K_0(A)$ with respect to the basis given by the simple modules.
Determining how much information about an indecomposable module is
retained by this vector is a classical problem in the representation
theory of algebras. We say that $A$ satisfies property $\X$, and write
$\X(A)$, if any two indecomposable finitely generated $A$-modules $M$
and $N$ with $[M]=[N]$ are isomorphic. Thus $\X$ requires the
composition-factor vector to distinguish all indecomposable modules,
with no restriction on their homological properties or on the
Auslander--Reiten components in which they lie.

The basic example comes from Gabriel's theorem: for a path algebra of
Dynkin type over an algebraically closed field, the dimension vectors
of indecomposable modules are in bijection with the positive roots of
the corresponding root system \cite{Gabriel1972}. More generally,
every representation-finite hereditary Artin algebra satisfies $\X$;
see \cite[Chapter~VIII, Corollary~2.4]{ARS}. Happel proved the
corresponding uniqueness result for representation-directed
finite-dimensional algebras \cite{Happel1982}. Short chains and short
cycles provide further criteria \cite{AuslanderReiten1985,
ReitenSkowronskiSmalo1993}. In particular, if an indecomposable module
$M$ does not lie on a short cycle, then any indecomposable module with
the same composition factors as $M$ is isomorphic to $M$
\cite[Chapter~IX, Theorem~2.1]{ARS}. Consequently, an Artin algebra
whose module category has no short cycles satisfies $\X$
\cite[Chapter~IX, Corollary~2.3]{ARS}. Here a short cycle consists of
two nonzero nonisomorphisms $M\to N\to M$ between indecomposable
modules; the maps need not be irreducible.

Further classes arise from tilting theory. Over an algebraically closed
field, Happel showed that representation-finite piecewise hereditary
algebras satisfy $\X$ \cite[Corollary~7.7]{Happel1987}. Here piecewise
hereditary means that the bounded derived category is triangle
equivalent to that of a finite-dimensional hereditary algebra; this
class includes iterated tilted algebras of Dynkin type. Ringel proved
that all representation-finite cluster-tilted algebras over an
algebraically closed field satisfy $\X$
\cite[Theorem~1]{Ringel2011}, and Geng and Peng obtained an independent
proof \cite{GengPeng2012}. These results show that composition-factor
uniqueness also occurs in classes whose quivers may contain oriented
cycles.

There is also a preservation result involving Cohen--Macaulay Auslander
algebras. Chen and Lu proved that, if $\Lambda=kQ/I$ is a gentle algebra
over an algebraically closed field, $Q$ has no loops, and $\X(\Lambda)$
holds, then its Cohen--Macaulay Auslander algebra
$\operatorname{Aus}(\operatorname{Gproj}\Lambda)$ satisfies $\X$
\cite{ChenLu2019}. The conclusion concerns all indecomposable modules
over the new algebra. In particular, combining their result with the
cluster-tilted case gives $\X$ for the Cohen--Macaulay Auslander
algebras of cluster-tilted algebras of type $A_n$.

Elementary module classifications supply additional examples.
Every local Nakayama algebra satisfies $\X$, since its indecomposable
modules are determined by their lengths
\cite[Chapter~VI, Theorem~2.1]{ARS}. The same classification shows that
if $C_n$ is the oriented cycle on $n\ge2$ vertices and $J$ is the arrow
ideal of $kC_n$, then
\[
 \X(kC_n/J^L)\quad\Longleftrightarrow\quad L<n
 \qquad(L\ge1).
\]
Indeed, an indecomposable module of length less than $n$ has consecutive,
distinct simple composition factors, which determine its top and
length. If $L\ge n$, the length-$n$ quotients of the projective covers
of different simple modules are nonisomorphic, although each contains
every simple module exactly once. Thus representation-finiteness alone
does not imply $\X$, even for Nakayama algebras. Another immediate
consequence of a classical classification is that a string algebra
whose underlying unoriented graph is a tree satisfies $\X$: there are
no bands, and the support of a string determines its path up to
inversion \cite{ButlerRingel1987}.

Composition factors have also been used to study specified parts of
module categories. Zhang proved uniqueness within each
Auslander--Reiten component of a wild hereditary Artin algebra
\cite{Zhang1990}. Skowro\'nski and Zwara studied the numbers of discrete
indecomposable modules with prescribed composition factors over tame
algebras \cite{SkowronskiZwara1997}. For generalized standard almost
cyclic coherent components without external short paths, Malicki
obtained uniqueness for indecomposable modules without self-extensions
\cite{Malicki2006}. Jaworska, Malicki and Skowro\'nski gave conditions
under which an entire Auslander--Reiten component is determined by the
composition-factor vectors of its modules
\cite{JaworskaMalickiSkowronski2012}. In higher homological algebra,
Reid established composition-factor criteria for indecomposable objects
in suitable $d$-cluster tilting subcategories \cite{Reid2020}.
These statements concern restricted collections of modules or the
identification of components and do not, by themselves, assert $\X$
for the whole algebra.

The examples above lead to a converse finiteness question, recorded in
\cite[p.~409, Conjecture~(4)]{ARS}. We refer to it as the
\emph{composition-factor conjecture}.

\begin{conjecture}
An Artin algebra satisfying $\X$ has finite representation type.
\end{conjecture}

The conjecture is closely related to the Brauer--Thrall theorems.
The first Brauer--Thrall theorem asserts that a representation-infinite
Artin algebra has indecomposable modules of arbitrarily large composition
length; see \cite{Ringel2008BT1}. This unboundedness does not contradict
$\X$, since the property compares modules with the same
composition-factor vector. Over an algebraically closed field, the
second Brauer--Thrall theorem supplies the required multiplicity:
a representation-infinite finite-dimensional algebra has infinitely
many pairwise nonisomorphic indecomposable modules of some fixed
dimension \cite{Bautista1985,Bongartz1985,Smalo1980}. There are only
finitely many composition-factor vectors in that dimension, so such
an algebra cannot satisfy $\X$. Related developments include the
absence of gaps in the lengths of indecomposable modules
\cite{Bongartz2013NoGaps}; see also the survey
\cite{Bongartz2014Survey}.

The fixed-dimension assertion used in this argument does not extend to
arbitrary ground fields. In particular, over a finite field there are
only finitely many module structures of any prescribed finite
dimension, up to isomorphism. Thus the algebraically closed case does
not settle the conjecture for general Artin algebras. Blasco established
the conjecture for commutative, hereditary, and radical-square-zero
Artin algebras \cite{Blasco}. In the commutative case, $\X$ is
equivalent to the ring being an artinian principal ideal ring, and in
the hereditary case it is equivalent to finite representation type.
The radical-square-zero result is an implication from $\X$ to finite
representation type; its converse fails, as the example
$kC_2/J^2$ above already shows.

The aim of this paper is to prove the conjecture for arbitrary Artin
algebras. Our main result is the following.

\begin{theorem}\label{artin:main}
Let $A$ be an Artin algebra. If $\X(A)$ holds, then $A$ has finite
representation type.
\end{theorem}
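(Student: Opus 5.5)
We prove the contrapositive. Suppose $A$ is representation-infinite; we produce two nonisomorphic indecomposable modules with the same composition-factor vector. The keywords of the paper (representation embeddings, graded modules) suggest the strategy: pass to a simpler algebra where the failure of $\X$ is visible, then transport the two modules back by a functor that preserves indecomposability, reflects isomorphism, and sends modules with equal vectors to modules with equal vectors. We use two reductions.

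\emph{Step 1: minimal representation-infinite quotients.} Since $A$ is representation-infinite and artinian, there is an ideal $I$ maximal with respect to $A/I$ being representation-infinite. Modules over $A/I$ are $A$-modules with the same composition factors, and the inclusion of module categories is full. Hence $\X(A)$ implies $\X(A/I)$, and we may assume $A$ is minimal representation-infinite. By the first Brauer--Thrall theorem, $A$ has indecomposable modules of unbounded length. Minimality forces every proper quotient to be representation-finite, and so restricts the structure of $A$ strongly, as in Bongartz's and Ringel's analysis of minimal representation-infinite algebras.

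\emph{Step 2: reduction to a radical-square-zero or hereditary situation via graded modules.} We consider the associated graded algebra, or a covering-type construction. Concretely, for $J=\rad A$ and suitable $n$, we build an exact functor
\[
F\colon \Lambda\modu\to A\modu
\]
from a hereditary or radical-square-zero algebra $\Lambda$ built from $A/J^2$ and a bimodule (for instance the separated algebra $\begin{pmatrix} A/J & 0\\ J/J^2 & A/J\end{pmatrix}$, or a graded version). We want $F$ to be a representation embedding: exact, preserving indecomposability, and reflecting isomorphism. We also need $[FM]$ to depend only on $[M]$ through a linear map on $K_0$; exactness gives this, since $[FM]=\sum_i [M:T_i]\,[F T_i]$ over the simple $\Lambda$-modules $T_i$. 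Then $\X(A)$ implies $\X(\Lambda)$, because $FM\cong FN$ forces $M\cong N$. Blasco's theorem for hereditary and radical-square-zero algebras then gives that $\Lambda$ is representation-finite.

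\emph{Step 3: contradiction.} We must arrange that $\Lambda$ is representation-infinite whenever $A$ is. For $A/J^2$, the standard Gabriel-type correspondence gives this: the separated-quiver species is representation-infinite if and only if $A/J^2$ is. The remaining case is when $A/J^2$ is representation-finite but $A$ is not. Here we use minimality from Step 1 together with a grading, or a Galois covering by a locally bounded category, to find a finite convex piece that is tilted or hereditary and representation-infinite. We then push its modules down, checking that push-down preserves indecomposables, is injective on orbits, and is compatible with composition factors; the last point holds because composition factors of push-downs are sums over fibres.

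The main obstacle is Step 3 over arbitrary ground fields. We lack both the second Brauer--Thrall theorem and covering theory in full generality, and we need the embedding $F$ to reflect isomorphism and not merely preserve indecomposability. Our first attempt would be to realise a representation-infinite hereditary species (a tame or wild bimodule) as a subquotient $eAe/\text{(ideal)}$ for a minimal representation-infinite $A$. We would then construct the functor via $-\otimes$ with an explicit bimodule, and verify full faithfulness on the relevant subcategory.
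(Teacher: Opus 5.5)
\textbf{There is a genuine gap: your proposal does not yet prove the theorem.} Everything rests on Step~3, which you leave open, and the statement it needs is not available. You would have to show that every (minimal) representation-infinite Artin algebra $A$ receives an exact, indecomposable-preserving functor, reflecting isomorphism at least up to a free group action, from some algebra on which $\X$ is already known to fail.

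The tools you invoke do not reach that far. Galois coverings and push-down functors are set up for algebras presented by quivers with relations over a field, in practice an algebraically closed one with a good cover. An Artin algebra over a base such as $\mathbb Z/p^n$ has no such presentation, and your plan never reduces to a field. Even over an algebraically closed field, the finite convex representation-infinite pieces that covering theory supplies (Bongartz's critical algebras) are in general tame concealed rather than hereditary. So Blasco's hereditary case would not apply to them without a further argument.

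Nothing else in your sketch produces the required coincidence of composition vectors. Brauer--Thrall~I gives unbounded lengths, which is compatible with $\X$. The $A/J^2$ part of Step~2 is redundant: $\X$ passes to the quotient $A/J^2$, and Blasco's radical-square-zero result applies to it directly. What remains is exactly the case you flag as the obstacle: $A$ representation-infinite with $A/J^2$ representation-finite, over an arbitrary base.

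\textbf{The missing idea is to use $\X$ positively, to constrain $A$ before looking for a contradiction.} For basic $A$ over a local base, the paper shows that $\X$ forces the following:
\begin{itemize}
\item each corner $A_i=e_iAe_i$ is a local Nakayama algebra;
\item $e_iAe_jAe_i=0$ for $i\ne j$;
\item $\mathfrak r_i(e_iAe_j)=0=(e_iAe_j)\mathfrak r_j$ for $i\ne j$.
\end{itemize}
Replacing the corners by truncated skew polynomial algebras over $D_i$ gives a finite-dimensional algebra $A^0$ over the residue field. It comes with a direct-sum-compatible bijection of isomorphism classes that preserves composition multiplicities. This disposes of non-field bases, and afterwards no hereditary piece is needed.

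Over a finite field, $\X$ bounds the number of indecomposables of dimension at most $d$ polynomially in $d$. This bound survives a separable splitting extension $K/k$. Meanwhile, Bongartz's representation embedding of $K[X]$-modules into $K\otimes_kA$ yields $(q^p-q)/p$ pairwise nonisomorphic indecomposables of dimension $rp$. Counting thus replaces any search for two modules with equal composition vectors.

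Over an infinite field, a first-order splitting argument for polynomial families shows the following: $\X$ together with $\Ext_A^1(S,S)=0$ for all simples forces every indecomposable to be a brick, hence finite type. The paper then puts a $\mathbb Z^s$-grading on $A^0$ that places the radical of each corner in its own nonzero degree. This removes graded simple self-extensions and bounds supports uniformly up to shift. The Gordon--Green criterion then gives finite type for $A^0$, and hence for $A$.
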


The proof begins with a structural reduction. After passing to a basic
algebra over a commutative artinian local base ring, we show that
property $\X$ forces the primitive corners to be local Nakayama
algebras and imposes restrictions on their interaction with the
off-diagonal components. Replacing these corners by truncated skew
polynomial algebras produces a finite-dimensional algebra over the
residue field. The construction gives a bijection between module
isomorphism classes that is compatible with direct sums and preserves
composition multiplicities. It therefore preserves indecomposability
and reduces the finiteness problem to algebras over fields.

The finite-field and infinite-field cases require different arguments.
Over a finite field, property $\X$ gives a polynomial upper bound for
the number of indecomposable modules of bounded dimension. After a
finite splitting-field extension, representation embeddings
\cite{Bongartz} produce exponentially many pairwise nonisomorphic
indecomposable modules if the algebra is representation-infinite,
yielding a contradiction. Related representation embeddings over
perfect fields appear in \cite{BautistaPerezSalmeron2025}.
Over an infinite field, polynomial module families and a first-order
splitting argument show that, in the absence of simple self-extensions,
every indecomposable module is a brick and the algebra is
representation-finite. We then apply this result to categories of
graded modules with support in a fixed finite set. A multigrading on
the replacement algebra allows us to bound, uniformly up to shift,
the supports of all indecomposable graded modules in terms of the
Loewy lengths of its primitive corners. Results of Gordon and Green
\cite{GordonGreen1982} and Farnsteiner \cite{Farnsteiner2009} then give
finite representation type. Further connections between gradability
and representation type are developed by Dugas \cite{Dugas}.

The paper is organized as follows. Section~\ref{sec:preliminaries}
collects preliminary results on Artin algebras, commutative rings,
and graded modules. Section~\ref{c2:sec:replacement} establishes
the reduction to finite-dimensional algebras over fields.
Section~\ref{sec:field-cases} treats the finite-field and infinite-field
cases separately. The proof of Theorem~\ref{artin:main} is completed
in Section~\ref{c2:sec:completion}.

\section{Preliminaries}\label{sec:preliminaries}

Section \ref{sec:preliminaries} collects preliminary results on Artin algebras, commutative rings and graded algebras.

\subsection{Artin algebras}\label{sec:prelim-artin}

An \emph{Artin algebra} is an algebra that is finitely generated as a
module over a commutative artinian ring acting centrally. Unless otherwise specified, modules are left modules. We consider finitely generated modules over Artin algebras, except for the auxiliary bimodules and polynomial families used in
Section \ref{sec:field-cases}. $\Hom$ and $\Ext$ involving these auxiliary modules are computed in the category of
all modules. An
algebra is \emph{representation-finite} if it has only finitely many
isomorphism classes of indecomposable finitely generated modules.
We write $J(A)$ for the Jacobson radical of $A$, $\ell(M)$ for the
composition length of $M$, and $A\modu$ for the category of finitely
generated left $A$-modules. A \emph{brick} is a nonzero module whose
endomorphism ring is a division ring.

Morphisms are composed from left to right: for $f:U\to V$ and
$g:V\to W$, their composite is $fg:U\to W$. We write module
homomorphisms on the right, so that $(x)(fg)=((x)f)g$.
Functors act on the left and are composed from right to left.

The first lemma gives a useful criterion for an extension of bricks
to be indecomposable. We then apply it to an indecomposable nonbrick
of minimal length.

\begin{lemma}
\label{c2:lem:brick-extension}
Let $U$ and $V$ be bricks, and suppose that there is a monomorphism
$u:U\longrightarrow V$.  The middle term of any nonsplit exact sequence
\[
  0\longrightarrow V\xrightarrow{i}E\xrightarrow{\pi}U
     \longrightarrow0
\]
is indecomposable and is not a brick.
\end{lemma}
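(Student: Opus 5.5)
Two things need proving: $E$ is not a brick, and $E$ is indecomposable.

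\emph{$E$ is not a brick.} I would use the endomorphism $\varphi=\pi u i$ of $E$, in the paper's left-to-right notation: first $\pi:E\to U$, then $u:U\to V$, then $i:V\to E$. This map is nonzero, since $\pi$ is epi, $u$ is mono and $i$ is mono. It is not an isomorphism, because $\varphi^2=\pi u(i\pi)ui=0$, using $i\pi=0$. So $\End(E)$ contains a nonzero nilpotent element and cannot be a division ring.

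\emph{$E$ is indecomposable.} Since $E$ is finitely generated over an Artin algebra, it suffices to show that $\End(E)$ is local. Equivalently, every idempotent $e$ of $\End(E)$ is $0$ or $1$. Fix $e\in\End(E)$.
\begin{itemize}
\item The composite $iep:V\to U$ has to be analysed first. Since $u$ is a monomorphism, composing with it gives $ie\pi u:V\to V$, which lies in the division ring $\End(V)$.
\item Suppose $ie\pi u$ were nonzero. It would then be an isomorphism, so $ie\pi$ would be a split monomorphism $V\to U$ with $u$ giving a left inverse up to an automorphism. Then $V$ is a summand of $U$. Combined with $u:U\to V$ mono, length comparison gives $U\cong V$, and $u$ is an isomorphism.
\item In that case $\pi u(ie\pi u)^{-1}$ retracts... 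Here I expect a cleaner route is needed. Instead I would argue: if $ie\pi\neq 0$, then $ie\pi u$ is an automorphism of $V$. Hence $i$ composed with $e$ and then $\pi u(ie\pi u)^{-1}$ is the identity of $V$, so $i$ is a split monomorphism. This contradicts nonsplitness.
\item Therefore $ie\pi=0$. So $e$ maps $i(V)$ into $\ker\pi=i(V)$, and induces $e_V\in\End(V)$ with $ie=e_Vi$, and $e_U\in\End(U)$ with $e\pi=\pi e_U$.
\end{itemize}

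\emph{Applying this to an idempotent.} Let $e$ be idempotent. Then $e_V$ and $e_U$ are idempotents in division rings, so each is $0$ or $1$.
\begin{itemize}
\item If $e_V=1$ and $e_U=0$, then $ie=i$ and $e\pi=0$. The map $e$ factors through $i$ as $e=ti$ for some $t:E\to V$, and $ie=i$ gives $iti=i$, so $it=1_V$ since $i$ is mono. Thus $i$ splits, a contradiction.
\item If $e_V=0$ and $e_U=1$, apply the same argument to $1-e$, which has $(1-e)_V=1$ and $(1-e)_U=0$. Again $i$ splits, a contradiction.
\item If $e_V=e_U=0$, then $e$ vanishes on $i(V)$ and has image inside $i(V)$. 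So $e^2$ factors through $i\pi=0$, i.e.\ $e^2=0$. Since $e$ is idempotent, $e=0$.
\item If $e_V=e_U=1$, the same argument applied to $1-e$ gives $1-e=0$, so $e=1$.
\end{itemize}
Hence $E$ is indecomposable.

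\emph{Main obstacle.} The key step is showing $ie\pi=0$. This is the only place where the monomorphism $u$ and the brick property of $V$ are used, together with nonsplitness. The rest is a routine triangular-matrix argument.
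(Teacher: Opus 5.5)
Your proposal is correct and follows the paper's proof essentially step for step. The square-zero endomorphism $\pi u i$ shows $E$ is not a brick. For indecomposability you show $ie\pi=0$ for every endomorphism $e$, using that $ie\pi u$ would otherwise be an automorphism of $V$ splitting $i$, and then run the same four-case analysis on the induced idempotents $e_V,e_U$; you even spell out why the mixed cases split $i$, which the paper leaves implicit. Only cosmetic cleanup is needed: delete the abandoned second and third bullets of the indecomposability argument and correct the typo $iep$ to $ie\pi$.
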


\begin{proof}
For every $a\in \End_{A}(E)$, we claim that $ia\pi=0$.
Otherwise, $ia\pi u\neq 0$, so $ia\pi u$ is an isomorphism since
$V$ is a brick. Hence $i$ is a split monomorphism, a contradiction.
Thus we obtain the following commutative diagram with exact rows:
\[
\begin{tikzcd}[column sep=large, row sep=large]
0 \arrow{r}
  & V \arrow{r}{i} \arrow[dashed]{d}{f}
  & E \arrow{r}{\pi} \arrow{d}{a}
  & U \arrow{r} \arrow[dashed]{d}{g}
  & 0 \\
0 \arrow{r}
  & V \arrow{r}{i}
  & E \arrow{r}{\pi}
  & U \arrow{r}
  & 0
\end{tikzcd}
\]
If $a^2=a$, then $f^2=f$ and $g^2=g$. Since $V$ and $U$ are bricks,
each of $f,g$ is either zero or the identity homomorphism.
The cases $(f, g) = (1, 0)$ and $(0, 1)$ would split the sequence. If $f = g = 0$, then
$a^2 = 0$, so $a = 0$; applying the same argument to $1-a$ treats the case $f = g = 1$.
Thus $a$ is either zero or the identity homomorphism, and $E$ is
indecomposable. Since $\pi ui$ is nonzero and $(\pi ui)^2=0$,
$E$ is not a brick.
\end{proof}

\begin{lemma}\label{c2:lem:shortest-nonbrick}
Suppose that $A$ satisfies $\X$ and that $M$ has minimal composition
length among the indecomposable $A$-modules that are not bricks.
There is a nonsplit exact sequence
\begin{equation}\label{c2:eq:shortest-selfextension}
 \xi:\quad 0\longrightarrow U\longrightarrow M\longrightarrow U
 \longrightarrow0
\end{equation}
with $U$ a brick. Its pullback along the inclusion of any proper
submodule of $U$ is split.
\end{lemma}
\begin{proof}
The local Artin ring $\End_A(M)$ has nonzero radical. Choose a
nonzero element $f$ of that radical for which $\ell(\im f)$ is
minimal, and put
\[
 I=\im f,\qquad K=\ker f,\qquad C=\coker f.
\]
Since $f$ is nilpotent, $f^2\ne0$ would give a nonzero radical
endomorphism with smaller image. Hence $f^2=0$ and $I\subseteq K$.
Clearly, $I$ is indecomposable.
Since $\ell(I)<\ell(M)$, the module $I$ is a brick.

Write $K=\bigoplus_{j=1}^qK_j$ with each $K_j$ indecomposable.
These modules are bricks by the minimality of $M$. Choose a nonzero
component $u:I\to K_j$ of the inclusion $I\hookrightarrow K$.
The composite $M\twoheadrightarrow I\xrightarrow{u}K_j
\hookrightarrow M$ is nonzero and noninvertible. Minimality of
$\ell(I)$ therefore forces $u$ to be injective.
Pushing out $0\to K\to M\to I\to0$ along the projection onto $K_j$
gives
\[
 0\longrightarrow K_j\longrightarrow
 M\big/\bigoplus_{l\ne j}K_l\longrightarrow I\longrightarrow0.
\]
This sequence is nonsplit, since a splitting would extend the
projection $K\to K_j$ to $M\to K_j$ and make $K_j$ a direct summand
of $M$. Lemma~\ref{c2:lem:brick-extension} makes its middle term an
indecomposable nonbrick. Thus $q=1$, by the minimality of $M$, and
$K$ is a brick.

Dually, $C$ is also a brick. Since $K$ and $C$ have the same composition factors,
property $\X$ gives $K\cong C$. The natural map
$K\hookrightarrow M\twoheadrightarrow C$ has kernel $I\ne0$;
as a noninvertible map between isomorphic bricks, it must be zero.
Hence $K=I$. Taking $U=I$ gives
\eqref{c2:eq:shortest-selfextension}, which is nonsplit because $M$
is indecomposable.

Let $V\subsetneq U$, and write $V$ as a direct sum of
indecomposable modules $V_j$. Each nonzero $V_j$ is a brick and
embeds into $U$. If the pullback of $\xi$ to $V_j$ were nonsplit,
Lemma~\ref{c2:lem:brick-extension} would give an indecomposable
nonbrick of length $\ell(U)+\ell(V_j)<\ell(M)$. This is impossible.
Additivity of $\Ext_A^1(-,U)$ on finite direct sums now proves that
the pullback to $V$ is split.
\end{proof}

\subsection{Commutative rings}\label{sec:prelim-commutative}

The results in this subsection will be used to study families of modules over polynomial rings.
No finite-dimensionality assumption is imposed on the commutative base rings; noetherianity and finite generation are assumed where stated. We begin with Krull's intersection
theorem; see \cite[Theorem~8.10]{CRT}.

\begin{lemma}\label{krull}
Let $R$ be a commutative Noetherian local ring, let $I$ be a proper
ideal of $R$, and let $M$ be a finitely generated $R$-module. Then
\[
 \bigcap_{n\geq0}I^nM=0.
\]
\end{lemma}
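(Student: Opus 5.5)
The plan is the standard route through the Artin--Rees lemma; the paper cites the statement from \cite[Theorem~8.10]{CRT}, but the argument is short enough to sketch. Put $N=\bigcap_{n\ge0}I^nM$. Since $R$ is Noetherian and $M$ is finitely generated, $M$ is a Noetherian module. Hence $N$ is a finitely generated submodule of $M$. The aim is to prove the single identity $IN=N$. Then, since $I$ is proper and $R$ is local, $I\subseteq\mathfrak m$, and Nakayama's lemma forces $N=0$.

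To get $IN=N$, I will apply the Artin--Rees lemma to the submodule $N\subseteq M$ and the ideal $I$. It provides an integer $c\ge0$ such that
\[
 I^nM\cap N=I^{n-c}\bigl(I^cM\cap N\bigr)\qquad(n\ge c).
\]
Because $N\subseteq I^nM$ for every $n$, the left side equals $N$ and $I^cM\cap N=N$. Taking $n=c+1$ therefore gives $N=IN$, as required.

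The only real ingredient is the Artin--Rees lemma, so that is the step I expect to be the main obstacle if one insists on a self-contained proof. To prove it, I will form the Rees algebra $R^\ast=\bigoplus_{n\ge0}I^nt^n\subseteq R[t]$. It is Noetherian, being generated over $R$ by finitely many generators of $I$ placed in degree one. The module $M^\ast=\bigoplus_n I^nM$ is a finitely generated graded $R^\ast$-module. Its graded submodule $\bigoplus_n (I^nM\cap N)$ is then finitely generated, say by homogeneous elements of degree at most $c$, and this yields the displayed equality for $n\ge c$.

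Alternatively, one can avoid Artin--Rees by using Krull's original argument via primary decomposition of $IN\subseteq N$. I would not pursue that here, since the Rees algebra argument is direct and entirely routine.
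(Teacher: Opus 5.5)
Your proposal is correct. The paper gives no proof of its own and only cites the Krull intersection theorem from Matsumura, whose standard proof is exactly your argument: Artin--Rees together with $N\subseteq I^nM$ gives $N=IN$, and Nakayama's lemma with $I\subseteq\mathfrak m$ then gives $N=0$. Your Rees-algebra sketch of Artin--Rees, using $\bigoplus_n I^nt^n$, is also the standard one and is fine as stated.
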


\begin{lemma}\label{aux}
Let $R$ be a commutative Noetherian integral domain with fraction field
$K$, and let $I\subsetneq R$ be a proper ideal. Let $L,M$ be finitely
generated $R$-modules such that
\[
 \dim_K(L\otimes_RK)=\dim_K(M\otimes_RK).
\]
Suppose that an $R$-module homomorphism $\varphi:L\to M$ induces a
surjective $R/I$-module homomorphism
\[
 L/IL\longrightarrow M/IM.
\]
Then $\varphi\otimes_R1_K$ is an isomorphism of $K$-vector spaces.
If, in addition, $k$ is a field, $A$ is a $k$-algebra, $R$ is a commutative $k$-
algebra, $L$ and $M$ are
$A\otimes_kR$-modules, and $\varphi$ is an $A\otimes_kR$-module homomorphism,
then $\varphi\otimes_R1_K$ is an $A\otimes_kK$-module isomorphism.
\end{lemma}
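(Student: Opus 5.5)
The plan is to compare $L$ and $M$ through a localization at a maximal ideal containing $I$, where Nakayama's lemma turns the surjectivity modulo $I$ into genuine surjectivity, and then pass to the fraction field. Since $I$ is proper, choose a maximal ideal $\mathfrak m\supseteq I$ and localize: $R_{\mathfrak m}$ is a Noetherian local domain with the same fraction field $K$, and $L_{\mathfrak m},M_{\mathfrak m}$ are finitely generated. The hypothesis says $\varphi(L)+IM=M$, so $C=\coker\varphi$ satisfies $C=IC$. Localizing gives $C_{\mathfrak m}=I_{\mathfrak m}C_{\mathfrak m}\subseteq \mathfrak m R_{\mathfrak m}C_{\mathfrak m}$. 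Hence Nakayama's lemma (or Lemma~\ref{krull}, since $C_{\mathfrak m}=I^nC_{\mathfrak m}$ for all $n$) gives $C_{\mathfrak m}=0$. Thus $\varphi_{\mathfrak m}:L_{\mathfrak m}\to M_{\mathfrak m}$ is surjective.

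Next tensor with $K$. Because $K$ is a localization of $R_{\mathfrak m}$, the functor $-\otimes K$ is exact and $C\otimes_RK=C_{\mathfrak m}\otimes_{R_{\mathfrak m}}K=0$. Therefore $\varphi\otimes 1_K:L\otimes_RK\to M\otimes_RK$ is a surjective $K$-linear map. Its source and target are finite-dimensional, since $L$ and $M$ are finitely generated, and they have equal dimension by hypothesis, so the map is an isomorphism.

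For the additional assertion, note that $L\otimes_RK$ carries an $A\otimes_kK$-module structure: $A$ acts through the first factor, and this action commutes with $R$ because $\varphi$ and the actions are $A\otimes_kR$-linear. Since $\varphi$ is $A\otimes_kR$-linear, $\varphi\otimes1_K$ commutes with the $A$-action and is $K$-linear, hence is $A\otimes_kK$-linear. A bijective module homomorphism is an isomorphism, which gives the claim.

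The only subtle point is the Nakayama step. $R$ itself need not be local, so $C=IC$ does not force $C=0$; it only forces $C$ to vanish at primes containing $I$. Localizing at $\mathfrak m\supseteq I$ handles this, because the generic point is a further localization of $R_{\mathfrak m}$.
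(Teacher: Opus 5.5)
Your proposal is correct and follows essentially the same route as the paper: pass to $C=\coker\varphi$ with $C=IC$, localize at a maximal ideal $\mathfrak m\supseteq I$ to get $C_{\mathfrak m}=0$, then tensor with $K$ and use the equal-dimension count. The only difference is that you kill $C_{\mathfrak m}$ with Nakayama's lemma, where the paper invokes Krull's intersection theorem (Lemma~\ref{krull}). Nakayama suffices here, so the Noetherian hypothesis is not actually needed for that step.
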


\begin{proof}
Let $C=\coker\varphi$, regarded as a finitely generated $R$-module.
The right exactness of taking quotients gives
\[
 L/IL\longrightarrow M/IM\longrightarrow C/IC\longrightarrow0.
\]
Hence $C=IC$. Choose a maximal ideal $\mathfrak m$ of $R$ containing
$I$. Then
\[
 C_{\mathfrak m}=(IR_{\mathfrak m})C_{\mathfrak m}
                 =(IR_{\mathfrak m})^nC_{\mathfrak m}
 \qquad(n\geq0).
\]
Lemma~\ref{krull} gives $C_{\mathfrak m}=0$. As $R$ is an integral
domain, $K$ is also the fraction field of $R_{\mathfrak m}$, and
\[
 C\otimes_RK
 \cong C_{\mathfrak m}\otimes_{R_{\mathfrak m}}K=0.
\]
Tensoring $L\to M\to C\to0$ with $K$ now shows that
$\varphi\otimes_R1_K$ is surjective. The two vector spaces have
the same finite dimension, so it is an isomorphism.
When the additional $A$-action is present, the tensor map is
an $A\otimes_kK$-module homomorphism, and so is its inverse.
\end{proof}

\begin{lemma}\label{c2:lem:finite-exceptional-elements}
Let $R$ be a commutative ring and let $E$ be a Noetherian
$R$-module. Suppose that a family
$\{a_\lambda\}_{\lambda\in\Lambda}\subseteq R$ satisfies
\[
 a_\lambda R+a_\mu R=R
 \qquad(\lambda\neq\mu).
\]
Then multiplication by $a_\lambda$ on $E$ is injective for all
but finitely many $\lambda\in\Lambda$.
\end{lemma}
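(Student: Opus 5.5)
Let $T_\lambda=\{x\in E: a_\lambda x=0\}$ be the kernel of multiplication by $a_\lambda$ on $E$. Each $T_\lambda$ is an $R$-submodule of $E$, and the claim is that $T_\lambda\neq0$ for only finitely many $\lambda$. The plan is to show that the nonzero $T_\lambda$ form an independent family of submodules, and then to use the Noetherian hypothesis to bound how many there can be.

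\emph{Step 1: independence.} Let $\lambda_1,\dots,\lambda_n$ be distinct. We show that the sum $T_{\lambda_1}+\dots+T_{\lambda_n}$ is direct. It suffices to check that
\[
 T_{\lambda_n}\cap\bigl(T_{\lambda_1}+\dots+T_{\lambda_{n-1}}\bigr)=0.
\]
Since $a_{\lambda_n}R+a_{\lambda_i}R=R$ for each $i<n$, multiplying these relations shows
\[
 a_{\lambda_n}R+a_{\lambda_1}\cdots a_{\lambda_{n-1}}R=R .
\]
So we may write $1=a_{\lambda_n}r+a_{\lambda_1}\cdots a_{\lambda_{n-1}}s$. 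Take $x$ in the intersection. Then $a_{\lambda_n}x=0$, because $x\in T_{\lambda_n}$. Also $a_{\lambda_1}\cdots a_{\lambda_{n-1}}x=0$, because $x$ is a sum of elements each killed by one of the factors $a_{\lambda_i}$ and $R$ is commutative. Hence $x=1\cdot x=0$. Induction on $n$ then gives directness.

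\emph{Step 2: finiteness.} Suppose infinitely many $T_\lambda$ are nonzero, and choose distinct $\lambda_1,\lambda_2,\dots$ with $T_{\lambda_i}\neq0$. By Step 1 the chain
\[
 T_{\lambda_1}\subsetneq T_{\lambda_1}\oplus T_{\lambda_2}\subsetneq\cdots
\]
is strictly ascending. This contradicts the hypothesis that $E$ is Noetherian. Therefore $T_\lambda=0$, that is, multiplication by $a_\lambda$ is injective on $E$, for all but finitely many $\lambda$.

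The only point needing care is the comaximality of a product with $a_{\lambda_n}$ in Step 1. This is the standard fact that if $I+J_i=R$ for each $i$, then $I+J_1\cdots J_{n-1}=R$, obtained by expanding the product $\prod_i(a_{\lambda_n}r_i+a_{\lambda_i}s_i)=1$. Everything else is routine.
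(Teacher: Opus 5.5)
Your proof is correct and uses the same key computation as the paper: a finite product of the $a_\mu$ is comaximal with any other $a_\lambda$, so an element killed by both must vanish. The only difference is how Noetherianity enters. The paper uses finite generation of $\sum_\lambda \ker(a_\lambda)$ to reduce it to a finite subsum killed by $d=\prod_{\mu\in\Lambda_0}a_\mu$, whereas you prove the kernels are independent and apply the ascending chain condition; these are interchangeable.
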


\begin{proof}
For $\lambda\in\Lambda$, put
\[
 E_\lambda=\ker(a_\lambda:E\longrightarrow E),
 \qquad
 H=\sum_{\lambda\in\Lambda}E_\lambda.
\]
Since $E$ is a Noetherian $R$-module, its submodule $H$ is
finitely generated. Each member of a finite generating set
for $H$ belongs to a finite sum of the $E_\lambda$.
Consequently, there is a finite subset
$\Lambda_0\subseteq\Lambda$ such that
\[
 H=\sum_{\mu\in\Lambda_0}E_\mu.
\]
Set
\[
 d=\prod_{\mu\in\Lambda_0}a_\mu.
\]
If $H=0$, take $\Lambda_0=\varnothing$ and $d=1$.
Since $a_\mu E_\mu=0$ for every $\mu\in\Lambda_0$, we have
$dH=0$.

Let $\lambda\notin\Lambda_0$. In the quotient ring
$R/a_\lambda R$, each $a_\mu$, for $\mu\in\Lambda_0$, is
invertible because $a_\lambda R+a_\mu R=R$.
Their product $d$ is therefore invertible in this quotient,
so
$a_\lambda R+dR=R.$
As $a_\lambda E_\lambda=0$ and $dE_\lambda=0$, it follows that
\[
 E_\lambda
 =RE_\lambda
 =(a_\lambda R+dR)E_\lambda
 =0.
\]
Thus multiplication by $a_\lambda$ on $E$ is injective.
\end{proof}

\begin{lemma}\label{c2:lem:normalization-over-domain}
Let $k$ be a field, let $A$ be a $k$-algebra, and let $R$ be
a commutative Noetherian $k$-algebra which is an integral domain.
Let $0\neq a\in R$ be a nonunit. Let $F$ be an
$A\otimes_kR$-module which is finitely generated and torsion-free
as an $R$-module.

If $V$ is an $A$-module and $w:V\to F$ is a nonzero
$A$-module homomorphism, then there exist an integer $m\geq0$
and an $A$-module homomorphism $v:V\to F$ such that
\[
 (x)w=a^m((x)v)\qquad(x\in V),
 \qquad
 (V)v\nsubseteq aF.
\]
Moreover, if $V$ is also a left $A\otimes_kR$-module and $w$ is
an $A\otimes_kR$-module homomorphism, then so is $v$.
\end{lemma}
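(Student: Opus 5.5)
The plan is to divide $w$ by the largest possible power of $a$, using Krull's intersection theorem (Lemma~\ref{krull}) to see that the power is bounded and torsion-freeness to make the quotient well defined.

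\textbf{Step 1: $(V)w$ does not lie in $\bigcap_{n\ge0} a^nF$.} Since $w\neq0$, pick $x_0\in V$ with $y_0=(x_0)w\neq0$. I claim $y_0\notin a^nF$ for some $n$. Since $a$ is a nonunit, choose a maximal ideal $\mathfrak m$ of $R$ with $a\in\mathfrak m$. The localization $F_{\mathfrak m}$ is a finitely generated module over the Noetherian local ring $R_{\mathfrak m}$, and $aR_{\mathfrak m}$ is a proper ideal. By Lemma~\ref{krull}, $\bigcap_n a^nF_{\mathfrak m}=0$. Because $F$ is torsion-free over the domain $R$, the map $F\to F_{\mathfrak m}$ is injective. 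Hence $\bigcap_n a^nF=0$, so $y_0\notin a^{n}F$ for some $n$.

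\textbf{Step 2: choose $m$.} Let $m\ge0$ be the largest integer with $(V)w\subseteq a^mF$. It exists because $m=0$ works and, by Step~1, $m<n$.

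\textbf{Step 3: define $v$.} Multiplication by $a^m$ on $F$ is injective, since $F$ is torsion-free and $a^m\neq0$. It is also $A\otimes_kR$-linear, because $R$ is commutative and $A$ and $R$ commute inside $A\otimes_kR$. So $a^m:F\to a^mF$ is an isomorphism of $A\otimes_kR$-modules. Set $v=w\,\theta$, where $\theta:a^mF\to F$ is the inverse of this isomorphism (composition from left to right, as in the paper). Then:
\begin{itemize}
\item $v$ is an $A$-homomorphism, and it is $A\otimes_kR$-linear whenever $w$ is;
\item $(x)w=a^m((x)v)$ for all $x\in V$, by construction;
\item $(V)v\nsubseteq aF$, since otherwise $(V)w\subseteq a^{m+1}F$, contradicting the maximality of $m$.
\end{itemize}

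The only nontrivial point is Step~1. It relies on reducing to the local case via torsion-freeness: injectivity of $F\to F_{\mathfrak m}$ holds because a torsion-free module over a domain embeds in its localization. Everything else is formal.
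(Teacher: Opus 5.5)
Your proposal is correct and follows the paper's proof essentially step for step. Both arguments apply Krull's intersection theorem in $R_{\mathfrak m}$ and use the injectivity of $F\to F_{\mathfrak m}$ to get $\bigcap_{n}a^nF=0$. Both then take the largest $m$ with $(V)w\subseteq a^mF$ and set $v=w\mu_m^{-1}$, where $\mu_m\colon F\to a^mF$ is multiplication by $a^m$.
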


\begin{proof}
Since $a$ is not a unit, there is a maximal ideal
$\mathfrak m$ of $R$ containing $a$. The ring $R_{\mathfrak m}$
is Noetherian and local, $aR_{\mathfrak m}$ is a proper ideal,
and $F_{\mathfrak m}$
is a finitely generated $R_{\mathfrak m}$-module.
By Lemma~\ref{krull}, we have
\[
 \bigcap_{n\geq0}a^nF_{\mathfrak m}=0.
\]
Since $R$ is an integral domain and $F$ is torsion-free,
the natural map $f:F\to F_{\mathfrak m}$ is injective.
Since $(\bigcap_{n\geq0}a^nF)f=0$,
\[
 \bigcap_{n\geq0}a^nF=0.
\]

As $w\neq0$, its image cannot be contained in every $a^nF$.
Therefore there is a largest integer $m\geq0$ such that
$(V)w\subseteq a^mF.$
Consider multiplication by $a^m$,
\[
 \mu_m:F\longrightarrow a^mF,
 \qquad (x)\mu_m=a^mx.
\]
This map is an
$A\otimes_kR$-module isomorphism because
$a^m\neq0$ and $F$ is torsion-free over $R$.

Since $(V)w\subseteq a^mF$, we can define an $A$-module homomorphism
\[
 v=w\mu_m^{-1}:V\longrightarrow F.
\]
Then
\[
 (x)w=a^m((x)v)\qquad(x\in V).
\]
Assume $(V)v\subseteq aF$. Then
\[
 (V)w=a^m((V)v)\subseteq a^{m+1}F,
\]
contrary to the maximality of $m$. Hence
$(V)v\nsubseteq aF$.
\end{proof}

\subsection{Graded algebras}\label{sec:prelim-graded}

We collect the graded-module results needed in the infinite-field case.
They relate composition multiplicities and simple self-extensions in
the graded category to the corresponding ungraded data.
For the general theory, see \cite{GordonGreen1982,CamilloFuller,Dugas}.

We use the following facts for finite-dimensional algebras graded
by a free abelian group of finite rank: the forgetful functor
preserves and reflects indecomposability, and two indecomposable
graded modules with isomorphic underlying ungraded modules differ by a
unique shift; see \cite[Corollary~1.3 and Theorem~1.5]{Farnsteiner2009}.

\begin{definition}
Let $G$ be a group, and let $A=\bigoplus_{g\in G}A_g$ be a $G$-graded $k$-algebra. Let $M=\bigoplus_{g\in G}M_g$ be a $G$-graded $A$-module. Define
\[
 \operatorname{supp}(M)
 :=
 \{g\in G\mid M_g\ne0\},
\]
and call it the \emph{support of $M$ with respect to the grading}.

Let $A\modu_{\mathrm{gr}}^G$ denote the category
of finite-dimensional $G$-graded $A$-modules with degree-preserving $A$-module homomorphisms.
For a subset $I\subseteq G$, let
$A\modu_{\mathrm{gr}}^I$ denote the full
subcategory of $A\modu_{\mathrm{gr}}^G$
consisting of the modules $M$ with
$\operatorname{supp}(M)\subseteq I$.
For $G=\mathbb Z^r$ and $a\in\mathbb Z^r$, our shift convention is
\[
 M[a]_v=M_{v+a}\qquad(v\in\mathbb Z^r).
\]
Thus $\operatorname{supp}(M[a])=\operatorname{supp}(M)-a$.
Shifts are exact autoequivalences of the graded module category
and do not change the underlying ungraded module.
\end{definition}

We first record a criterion ensuring that the radical is homogeneous
and the quotient is concentrated in the identity degree.

\begin{lemma}\label{c2:prop:graded-radical-reduced-quotient}
Let \(G\) be a torsion-free group with identity \(e\), and let $
R=\bigoplus_{g\in G}R_g$
be a \(G\)-graded ring. Assume that
$\operatorname{supp}(R)=\{g\in G\mid R_g\ne 0\}$
is finite and that \(R/J(R)\) has no nonzero nilpotent elements.
Then
\[
J(R)=J(R_e)\oplus\bigoplus_{g\ne e}R_g,
\qquad
R/J(R)\cong R_e/J(R_e),
\]
where the isomorphism is induced by the inclusion
\(R_e\hookrightarrow R\).
In particular, \(J(R)\) is homogeneous, and the induced
\(G\)-grading on \(R/J(R)\) is concentrated in degree \(e\).
\end{lemma}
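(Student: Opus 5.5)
The plan is to show first that every homogeneous component of nonidentity degree lies in $J(R)$, and then to compare $J(R)\cap R_e$ with $J(R_e)$. Throughout we use the standard fact that $1\in R_e$.

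\emph{Step 1: $R_g\subseteq J(R)$ for $g\ne e$.} Let $r\in R_g$ with $g\neq e$. Since $G$ is torsion-free, the elements $g,g^2,g^3,\dots$ are pairwise distinct. As $\operatorname{supp}(R)$ is finite, $R_{g^n}=0$ for all large $n$. Because $r^n\in R_{g^n}$, the element $r$ is nilpotent. Its image in $R/J(R)$ is then a nilpotent element, hence zero, since $R/J(R)$ has no nonzero nilpotent elements. So $r\in J(R)$, and therefore $N:=\bigoplus_{g\ne e}R_g\subseteq J(R)$.

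\emph{Step 2: $J(R)\cap R_e=J(R_e)$.} For the inclusion $\subseteq$, take $x\in J(R)\cap R_e$ and $y\in R_e$. Then $1-yx$ has a two-sided inverse $u$ in $R$. Since $1-yx$ is homogeneous of degree $e$, comparing the degree-$e$ components of $(1-yx)u=1=u(1-yx)$ gives $(1-yx)u_e=1=u_e(1-yx)$. Hence $1-yx$ is invertible in $R_e$ for every $y\in R_e$, and $x\in J(R_e)$.

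For the reverse inclusion, consider the ring homomorphism $\rho:R_e\hookrightarrow R\twoheadrightarrow R/J(R)$. By Step 1, $R=R_e+N\subseteq R_e+J(R)$, so $\rho$ is surjective. A surjective ring homomorphism maps the Jacobson radical into the Jacobson radical. Indeed, if $x\in J(R_e)$ and $\bar y=\rho(y)$ is arbitrary, then $1-yx$ is invertible in $R_e$, so $1-\bar y\rho(x)$ is invertible. Hence $\rho(J(R_e))\subseteq J(R/J(R))=0$, that is, $J(R_e)\subseteq J(R)$.

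\emph{Step 3: conclusion.} The kernel of $\rho$ is $R_e\cap J(R)=J(R_e)$, so $\rho$ induces the isomorphism $R_e/J(R_e)\cong R/J(R)$. It remains to identify $J(R)$. By Steps 1 and 2,
\[
J(R_e)\oplus N\subseteq J(R).
\]
Conversely, let $x\in J(R)$ and write $x=x_e+x'$ with $x'\in N$. Then $x'\in J(R)$ by Step 1, so $x_e\in J(R)\cap R_e=J(R_e)$. This gives
\[
J(R)=J(R_e)\oplus\bigoplus_{g\ne e}R_g .
\]
This ideal is visibly homogeneous, and the quotient $R/J(R)$ is spanned by the image of $R_e$, so its induced grading is concentrated in degree $e$.

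The only delicate point is the inclusion $J(R)\cap R_e\subseteq J(R_e)$ in Step 2. It relies on inverses of homogeneous elements of degree $e$ having invertible degree-$e$ components, and that comparison of components is straightforward.
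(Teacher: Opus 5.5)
Your proof is correct and follows essentially the same route as the paper. In both, nonidentity-degree homogeneous elements are nilpotent (by torsion-freeness and finite support) and so lie in $J(R)$. The equality $J(R)\cap R_e=J(R_e)$ then comes from comparing degree-$e$ components of inverses, together with the observation that the surjection $R_e\to R/J(R)$ sends $J(R_e)$ into $J(R/J(R))=0$. The paper splits off $J(R)=(J(R)\cap R_e)\oplus\bigoplus_{g\ne e}R_g$ before identifying $J(R)\cap R_e$, whereas you do it after, and this ordering is immaterial.
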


\begin{proof}
Put \(J=J(R)\). Let \(g\ne e\) and \(x\in R_g\).
Since \(G\) is torsion-free, the elements
\(g,g^2,g^3,\ldots\) are pairwise distinct.
As \(\operatorname{supp}(R)\) is finite, there exists
\(n\ge 1\) such that \(R_{g^n}=0\).
Hence \(x^n\in R_{g^n}=0\).
Thus \(x+J\) is nilpotent in \(R/J\), so the hypothesis
on \(R/J\) gives \(x\in J\). Consequently,
\[
R_g\subseteq J\qquad(g\ne e).
\]
Taking homogeneous components now yields
\[
J=(J\cap R_e)\oplus\bigoplus_{g\ne e}R_g.
\]
In particular, \(J\) is homogeneous, and the composite
\[
R_e\hookrightarrow R\twoheadrightarrow R/J
\]
is surjective with kernel \(J\cap R_e\).

It remains to show that \(J\cap R_e=J(R_e)\).
First, every element of \(R_e\) that is invertible in \(R\)
is already invertible in \(R_e\).
Indeed, if \(u\in R_e\) and \(v\in R\) satisfy \(uv=vu=1\),
then taking degree-\(e\) components gives
\(uv_e=v_eu=1\), where \(v_e\in R_e\) is the degree-\(e\)
component of \(v\).
Now let \(a\in J\cap R_e\). For every \(b\in R_e\),
the element \(1-ba\) is invertible in \(R\), hence in \(R_e\).
By the usual characterization of the Jacobson radical,
\(a\in J(R_e)\). Thus \(J\cap R_e\subseteq J(R_e)\).

Conversely, the surjection \(R_e\twoheadrightarrow R/J\)
maps \(J(R_e)\) into \(J(R/J)=0\).
Therefore \(J(R_e)\subseteq J\cap R_e\), and equality follows.
The asserted decomposition and isomorphism follow.
Since \(R_g\subseteq J\) for \(g\ne e\), the quotient grading
is concentrated in degree \(e\).
\end{proof}

\begin{lemma}
\label{c2:lem:graded-composition-factors}
Let $k$ be a field, and let $\Lambda$ be a finite-dimensional
$\mathbb Z^r$-graded $k$-algebra satisfying $\X$.
Then any two indecomposable graded $\Lambda$-modules with the same
graded composition multiplicities are isomorphic as graded modules.
The same assertion holds in
$\Lambda\modu_{\mathrm{gr}}^I$ for every subset
$I\subseteq\mathbb Z^r$.
\end{lemma}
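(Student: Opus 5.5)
Let $M$ and $N$ be indecomposable graded $\Lambda$-modules with the same graded composition multiplicities. The plan is to forget the grading, apply $\X(\Lambda)$ to the underlying modules, and then use the rigidity results of \cite{Farnsteiner2009} quoted before the definition. The shift ambiguity this produces is removed by comparing graded multiplicities. Graded composition multiplicities here means the multiplicities $[M:S[a]]$ of the graded simple modules $S[a]$, where $S$ ranges over the graded simples and $a\in\mathbb Z^r$ over shifts.

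First, by \cite[Corollary~1.3]{Farnsteiner2009}, the forgetful functor preserves indecomposability, so the underlying modules $\underline M$ and $\underline N$ are indecomposable. Next, a graded composition series of $M$ is in particular a composition series of $\underline M$ as an ungraded module, because each graded simple module is simple as an ungraded module. This holds since its underlying module is indecomposable and, by a standard argument (e.g.\ via Lemma~\ref{c2:prop:graded-radical-reduced-quotient} applied to $\Lambda/J$, or the general theory in \cite{GordonGreen1982}), semisimple, hence simple. Thus the ungraded multiplicities of $\underline M$ are obtained by summing the graded ones over shifts and forgetting the grading. Consequently $[\underline M]=[\underline N]$, and $\X(\Lambda)$ gives $\underline M\cong\underline N$.

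By \cite[Theorem~1.5]{Farnsteiner2009}, there is a unique $a\in\mathbb Z^r$ with $N\cong M[a]$ as graded modules. It remains to show $a=0$. Shifting is an exact autoequivalence sending $S[b]$ to $S[b+a]$, so $[M[a]:S[b+a]]=[M:S[b]]$. Hence the function $\mu_M\colon(S,b)\mapsto[M:S[b]]$ satisfies $\mu_N(S,b+a)=\mu_M(S,b)$. Since $\mu_N=\mu_M$, the function $\mu_M$ is invariant under translation by $a$. It is nonzero and finitely supported, because $M$ is finite-dimensional. A finite nonempty subset of $\mathbb Z^r$ invariant under translation by $a$ forces $a=0$, since $\mathbb Z^r$ is torsion-free: pick an element of the support maximal for a linear functional $\phi$ with $\phi(a)\neq0$, or iterate the translation. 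Thus $N\cong M$.

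For $\Lambda\modu_{\mathrm{gr}}^I$, this is a full subcategory closed under submodules, quotients, and isomorphism. An object of it that is indecomposable there is indecomposable in $\Lambda\modu_{\mathrm{gr}}^{\mathbb Z^r}$, because direct summands of a module supported in $I$ are supported in $I$. Graded composition multiplicities are computed identically in both categories, so the first assertion applies.

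The main obstacle is the identification of graded simple modules with ungraded simples up to shift, which is what makes the ungraded vector $[\underline M]$ a function of the graded multiplicities. I would settle it by noting that a graded simple $S$ is annihilated by the homogeneous part of $J(\Lambda)$. For a finite-dimensional $\mathbb Z^r$-graded algebra, $J(\Lambda)$ is homogeneous \cite{GordonGreen1982,CamilloFuller}, so $\underline S$ is a module over the graded semisimple algebra $\Lambda/J$. Since $\underline S$ is indecomposable by Farnsteiner, it is simple.
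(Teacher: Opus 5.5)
Your proposal is correct and follows essentially the same route as the paper. The paper forgets the grading (Farnsteiner, Corollary~1.3), applies $\X$, obtains a shift from Farnsteiner's Theorem~1.5, and then kills the shift by finiteness and torsion-freeness. It does this via $\operatorname{supp}(M)=\operatorname{supp}(N)$, which comes from exactness of taking homogeneous components, whereas you use translation invariance of the graded multiplicity function; the two are equivalent. Both treat the case of $I$ by the same Serre-subcategory observation. The step you flag as the main obstacle is not actually needed: exactness of the forgetful functor and additivity of ungraded composition vectors already give $[\underline M]=\sum_T[M:T]\,[\underline T]=[\underline N]$, whether or not each $\underline T$ is simple.
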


\begin{proof}
Let $M$ and $N$ be indecomposable graded modules with the same
graded composition multiplicities, and let $U:\Lambda\modu_{\mathrm{gr}}^{\mathbb Z^r}\to \Lambda\modu$ denote the
forgetful functor. By
\cite[Corollary~1.3]{Farnsteiner2009}, the underlying ungraded modules
$U(M)$ and $U(N)$ are indecomposable. Since $U$ is exact,
the equality of the graded composition multiplicities implies
that $U(M)$ and $U(N)$ have the same composition factors as $\Lambda$-modules. Thus
$U(M)\cong U(N)$ by property $\X$. By
\cite[Theorem~1.5]{Farnsteiner2009}, there exists
$d\in\mathbb Z^r$ such that $M\cong N[d]$ as graded modules.

Taking the component of any fixed degree is exact. Hence the
equality of the graded composition multiplicities also gives
$\dim_k M_g=\dim_k N_g$ for every $g\in\mathbb Z^r$.
In particular,
$S:=\operatorname{supp}(M)=\operatorname{supp}(N).$
The graded isomorphism $M\cong N[d]$ implies $S=S-d$.
Since $S$ is finite and nonempty and $\mathbb Z^r$ is torsion-free, $d=0$, and therefore
$M\cong N$ as graded modules.

Note that $\Lambda\modu_{\mathrm{gr}}^I$ is a Serre subcategory of $\Lambda\modu_{\mathrm{gr}}^{\mathbb Z^r}$. This proves the last assertion.
\end{proof}

Let $k$ be a field, let $\Lambda$ be a finite-dimensional basic
$\mathbb Z^r$-graded $k$-algebra, and put $J=J(\Lambda)$.
By Lemma~\ref{c2:prop:graded-radical-reduced-quotient},
the ideal $J$ is homogeneous and $\Lambda/J$ is concentrated
in degree zero. Thus the natural map
$\Lambda_0\to\Lambda/J$ is surjective with nilpotent kernel
$J_0:=J\cap\Lambda_0$.

A complete set of pairwise orthogonal primitive idempotents
of $\Lambda/J$ therefore lifts to $\Lambda_0$.
Since their images in $\Lambda/J$ are primitive, the lifted
idempotents are also primitive in $\Lambda$.
Fix such a set $f_1,\ldots,f_s\in\Lambda_0$.
For $1\leq i\leq s$, put
$C_i=f_i\Lambda f_i/f_iJf_i$ and
$T_i=\Lambda f_i/Jf_i$.

\begin{lemma}
\label{c2:lem:graded-simple-self-extensions}
For $v\in\mathbb Z^r$, let $T_{i,v}$ denote $T_i$ placed in
degree $v$. Then the simple graded $\Lambda$-modules are precisely
the modules $T_{i,v}$, and there are isomorphisms of $k$-vector spaces
\begin{equation}\label{c2:eq:graded-simple-self-ext}
 \operatorname{Ext}^1_{\mathrm{gr}\Lambda}(T_{i,v},T_{i,v})
 \cong
 \operatorname{Hom}_{C_i}
 \bigl(f_i(J/J^2)_0f_i,C_i\bigr).
\end{equation}
In particular,
\[
 \operatorname{Ext}^1_{\mathrm{gr}\Lambda}(T_{i,v},T_{i,v})=0
 \quad\Longleftrightarrow\quad
 f_i(J/J^2)_0f_i=0.
\]
These equivalent conditions hold whenever $f_i\Lambda_0f_i$
is a division ring. If $v\in I\subseteq\mathbb Z^r$, the same
formula holds with the extension group computed in
$\Lambda\modu_{\mathrm{gr}}^I$.
\end{lemma}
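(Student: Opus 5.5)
First I will classify the simple graded modules. If $T$ is a simple graded $\Lambda$-module, then $JT$ is a graded submodule because $J$ is homogeneous. By Nakayama's lemma $JT\neq T$, so $JT=0$ and $T$ is a simple graded $\Lambda/J$-module. The algebra $\Lambda/J$ is concentrated in degree $0$ by Lemma~\ref{c2:prop:graded-radical-reduced-quotient}. Hence each homogeneous component $T_v$ is a $\Lambda/J$-submodule, and simplicity forces $T=T_v$ for a single $v$, with $T_v$ a simple $\Lambda/J$-module. Since $\Lambda$ is basic, $T_v\cong T_i$ for a unique $i$. So the simple graded modules are exactly the $T_{i,v}$, and they are pairwise nonisomorphic.

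For the Ext computation I will use the graded projective cover $P=\Lambda f_i[-v]$, that is, $\Lambda f_i$ with its grading shifted so that $f_i$ sits in degree $v$. This works because $f_i\in\Lambda_0$. The cover fits into the graded exact sequence $0\to Jf_i[-v]\to P\to T_{i,v}\to0$. Applying $\Hom_{\mathrm{gr}}(-,T_{i,v})$ gives
\[
\Ext^1_{\mathrm{gr}\Lambda}(T_{i,v},T_{i,v})\cong\Hom_{\mathrm{gr}\Lambda}(Jf_i[-v],T_{i,v}),
\]
provided every graded map $P\to T_{i,v}$ factors through $P\to T_{i,v}$. Such a map is determined by the image of $f_i$ in $f_iT_i\cong C_i$. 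The induced map on the top of $P$ is then an element of $\End(T_i)\cong C_i^{\mathrm{op}}$, so the restriction map $\Hom(P,T)\to\Hom(Jf_i,T)$ is zero. That gives the displayed isomorphism.

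Next I identify this Hom space. Any map from $Jf_i$ to a semisimple module kills $J^2f_i$, so it factors through $Jf_i/J^2f_i$. Degree preservation forces the degree-$v$ part of $Jf_i[-v]$, which is $(Jf_i/J^2f_i)_0$, to map to $T_i$, and all other components to map to zero. Now $(J/J^2)_0f_i$ is a semisimple $\Lambda_0$-module, equivalently a $\Lambda/J$-module, and its $T_i$-isotypic part is detected by $f_i$. By the standard adjunction $\Hom_\Lambda(X,\Lambda f_i/Jf_i)\cong\Hom_{f_i\Lambda f_i}(f_iX,f_i\Lambda f_i/f_iJf_i)$ for semisimple $X$, with $C_i$ acting on the left via $f_i\Lambda f_i$, this becomes $\Hom_{C_i}(f_i(J/J^2)_0f_i,C_i)$. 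This proves the isomorphism \eqref{c2:eq:graded-simple-self-ext}.

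The vanishing criterion follows because $C_i$ is a division ring and $f_i(J/J^2)_0f_i$ is a finite-dimensional $C_i$-vector space, so its dual vanishes exactly when the space itself is zero. If $f_i\Lambda_0f_i$ is a division ring, then $f_iJ_0f_i=0$: it is a proper ideal, hence in the radical, but $J_0$ is nilpotent. Therefore $f_i(J/J^2)_0f_i=0$.

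For the subcategory $\Lambda\modu_{\mathrm{gr}}^I$, I will use that it is a Serre subcategory closed under extensions. Hence the Yoneda $\Ext^1$ between two of its objects agrees with the one computed in the full graded category; equivalently, the middle term of any extension of $T_{i,v}$ by $T_{i,v}$ is supported in $\{v\}\subseteq I$.

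The main obstacle is bookkeeping rather than ideas: tracking the left/right conventions, since maps are written on the right and $C_i$ acts on $f_iX$, and checking the degree conventions under the shift.
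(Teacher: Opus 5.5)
Your proposal is correct and follows the paper's proof essentially step by step. You classify the graded simples via $JT=0$ and the degree-zero concentration of $\Lambda/J$. You then use the graded projective $\Lambda f_i$ to get $\Ext^1_{\mathrm{gr}\Lambda}(T_{i,v},T_{i,v})\cong\Hom_{\mathrm{gr}\Lambda}(Jf_i[-v],T_{i,v})$; shifting it to degree $v$ instead of reducing to $v=0$ is an equivalent variant. Next you pass to $(J/J^2)_0f_i$ and then to the corner $C_i$ using basicness, and you finish with the division-ring and Serre-subcategory remarks. One phrase should be fixed: maps $P\to T_{i,v}$ do not ``factor through'' themselves. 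What you need, and what your argument effectively shows, is that every such map vanishes on $Jf_i[-v]$ because $JT_i=0$.
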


\begin{proof}
Let $T$ be a simple graded module. Since $J$ is homogeneous,
$JT$ is a graded submodule of $T$. The nilpotence of $J$
excludes $JT=T$, so $JT=0$. Thus $T$ is a graded
$\Lambda/J$-module. Since $\Lambda/J$ is concentrated in
degree zero, every homogeneous component of $T$ is a submodule.
Simplicity forces $T$ to be concentrated in a single degree,
where it is isomorphic to some $T_i$. Conversely, every
$T_{i,v}$ is simple.

Since shifts are exact autoequivalences, it suffices to prove
\eqref{c2:eq:graded-simple-self-ext} for $v=0$.
In the full graded module category, consider the exact sequence
\[
 0\longrightarrow Jf_i\longrightarrow\Lambda f_i
 \longrightarrow T_i\longrightarrow0.
\]
Every homomorphism $\Lambda f_i\to T_i$ vanishes on $Jf_i$,
because $JT_i=0$. Applying
$\operatorname{Hom}_{\mathrm{gr}\Lambda}(-,T_i)$ therefore gives
\[
 \operatorname{Ext}^1_{\mathrm{gr}\Lambda}(T_i,T_i)
 \cong
 \operatorname{Hom}_{\mathrm{gr}\Lambda}(Jf_i,T_i).
\]
Every map on the right vanishes on $J^2f_i$. Its induced map
from $(J/J^2)f_i$ can be nonzero only in degree zero.
Consequently,
\begin{align*}
 \operatorname{Hom}_{\mathrm{gr}\Lambda}(Jf_i,T_i)
 &\cong
 \operatorname{Hom}_{\Lambda/J}
 \bigl((J/J^2)_0f_i,T_i\bigr)\\
 &\cong
 \operatorname{Hom}_{C_i}
 \bigl(f_i(J/J^2)_0f_i,C_i\bigr).
\end{align*}
For the last isomorphism, we use that $\Lambda$ is basic,
so $\Lambda/J\cong\prod_{j=1}^sC_j$ and $T_i$ corresponds
to the regular $C_i$-module. This proves
\eqref{c2:eq:graded-simple-self-ext}.

If $f_i\Lambda_0f_i$ is a division ring, we have
\[
 f_iJ_0f_i=0
 \quad\text{and hence}\quad
 f_i(J/J^2)_0f_i=0.
\]

The last assertion follows from the fact that $\Lambda\modu_{\mathrm{gr}}^I$ is a Serre subcategory of $\Lambda\modu_{\mathrm{gr}}^{\mathbb Z^r}$.
\end{proof}

\section{Reduction to finite-dimensional algebras}
\label{c2:sec:replacement}\label{c2:sec:structure}

We first reduce the central base ring to a local one. If
$R=R^{(1)}\times\cdots\times R^{(u)}$ is the decomposition of the
commutative artinian base into local rings, its central idempotents give
$A=A^{(1)}\times\cdots\times A^{(u)}$, omitting zero factors.
Every indecomposable $A$-module is supported on exactly one factor.
Each $A^{(v)}$ is a quotient of $A$, and the finite product has
finite representation type if and only if each factor has finite
representation type. Hence it suffices to work over a fixed
commutative artinian local base $(R,\mathfrak m,k)$.

Throughout the structural reduction, $A$ is finite over this $R$ and
$J=\rad A$. Let $E=E_R(k)$ be the injective envelope of its residue
field, and put $\D=\Hom_R(-,E)$. We use $\ell_R$ for length over the
commutative base. The division rings at primitive vertices will be
written $D_i$, distinct from the duality symbol $\D$.

Note that property $\X$ is invariant under Morita equivalence and passage to the
opposite algebra, and passes to quotient algebras. Representation-finiteness
is invariant under Morita equivalence and passage to the opposite algebra.

We may assume $A$ is basic.
Fix a complete set $e_1,\ldots,e_s$ of pairwise orthogonal primitive
idempotents. Write $S_i$ for the corresponding simple left module,
$A_i=e_iAe_i$, $\mathfrak r_i=e_iJe_i=\rad A_i$, and
$D_i=A_i/\mathfrak r_i$.

The next three lemmas describe the restrictions imposed by $\X$ on
primitive corners and their interaction with the off-diagonal components.

\begin{lemma}\label{c2:lem:equal-top-socle}
Suppose that $A$ satisfies $\X$ and that $0\neq M\in A\modu$.

$(1)$ If $M$ has simple top and simple
socle, both isomorphic to $S$, then $M$ is uniserial and every composition
factor of $M$ is isomorphic to $S$.

$(2)$ If $A$ is a local $R$-algebra, then $A$ is a self-injective Nakayama algebra. In that case, if $J\ne0$, there is
$t\in J$ such that
$J^a=At^a=t^aA$ for every $a\geq0$.
\end{lemma}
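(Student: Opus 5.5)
The plan for $(1)$ is induction on $\ell(M)$, with property $\X$ comparing the radical of $M$ with its quotient by the socle. For $(2)$, I would use $\X$ to identify $A$ with its dual, which makes $A$ self-injective, and then apply $(1)$ on both sides.

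\emph{Part $(1)$.} If $\ell(M)=1$ there is nothing to prove, so assume $\ell(M)\ge2$. Then $\rad M\neq0$ and $\soc M\subseteq\rad M$, because $\soc M$ is simple and $M$ is not.
\begin{itemize}
\item The quotient $M/\soc M$ has simple top $S$, so it is local and hence indecomposable.
\item The submodule $\rad M$ has simple socle $\soc M\cong S$, so it is indecomposable.
\item Both modules have composition vector $[M]-[S]$. Property $\X$ therefore gives $\rad M\cong M/\soc M$.
\end{itemize}
Consequently $\rad M$ has simple top isomorphic to $S$, and its socle is $\soc M\cong S$. By induction, $\rad M$ is uniserial and all of its composition factors are isomorphic to $S$. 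Since $M$ has simple top, the only maximal submodule of $M$ is $\rad M$. Every proper submodule of $M$ therefore lies in the uniserial module $\rad M$, so $M$ is uniserial. Its composition factors are $S$ together with those of $\rad M$, all isomorphic to $S$.

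\emph{Part $(2)$, self-injectivity.} Let $A$ be local with unique simple module $S$.
\begin{itemize}
\item The left module ${}_AA$ has simple top $S$, so it is indecomposable.
\item The module $\D(A_A)$ is the injective envelope of $S$, so it has simple socle $S$ and is indecomposable.
\item Every composition factor of either module is $S$. Matlis duality gives $\ell_R(\D(A_A))=\ell_R(A)$. Hence both modules have multiplicity $\ell_R(A)/\ell_R(S)$ at $S$, so they have the same composition vector.
\end{itemize}
Property $\X$ then gives ${}_AA\cong\D(A_A)$. Thus $A$ is self-injective and $\soc({}_AA)\cong S$ is simple. Part $(1)$ now shows that ${}_AA$ is uniserial. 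Property $\X$ passes to $A^{\mathrm{op}}$, so the same argument shows that $A_A$ is uniserial. Hence $A$ is a self-injective local Nakayama algebra.

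\emph{Part $(2)$, the element $t$.} Suppose $J\neq0$. Uniseriality on each side makes $J/J^2$ simple both as a left and as a right module. Choose $t\in J\setminus J^2$. Then $J=At+J^2$ and $J=tA+J^2$, and Nakayama's lemma gives $J=At=tA$. The powers then follow by induction on $a$. Assuming $J^a=At^a$, we get
\[
J^{a+1}=J\cdot J^a=(tA)(At^a)=tAt^a=(At)t^a=At^{a+1},
\]
and symmetrically $J^{a+1}=t^{a+1}A$.

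The main obstacle is the composition-factor count in the self-injectivity step. It requires matching multiplicities through $R$-lengths, using $\ell_R(\D(X))=\ell_R(X)$ and the fact that $S$ is the only simple module. Every other step is formal.
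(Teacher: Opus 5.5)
Your proof is correct and follows the paper's route. For $(1)$ you use $\X$ to identify $\rad M$ with $M/\soc M$ and induct on length; for $(2)$ you use $\X$ to obtain ${}_AA\cong\D(A_A)$, apply $(1)$ on both sides, and take $t\in J\setminus J^2$, making explicit what the paper leaves implicit (the multiplicity count via $\ell_R(\D(A_A))=\ell_R(A)$, and the Nakayama-lemma argument for $J=At=tA$ and its powers).
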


\begin{proof}
$(1)$ Suppose that $M$ is not simple. Then $\rad M$ and $M/\soc M$ are indecomposable $A$-modules and have
the same composition factors. Property $\X$ gives $\rad M \cong M/\soc M$. In particular, $\rad M$ has
simple top isomorphic to $S$. Induction on the length of $M$ now shows that $\rad M$ is uniserial
and that all its composition
factors are isomorphic to $S$. Since every proper submodule of $M$ is contained in $\rad M$ and
$M/\rad M \cong S$, the same
assertions hold for $M$.

$(2)$ Since $A$ is local, the indecomposable modules $A$ and ${\D}(A_A)$ have the same composition factors. Thus $A\cong  \D(A_A)$ by $\X$. Consequently, $A$
is self-injective and its socle is simple. Applying $(1)$, and then the analogous argument to the
opposite algebra, shows that $A$ is a local Nakayama algebra. Any $t$ in $J$ outside $J^2$ generates $J$
on both sides, and the assertion about its powers follows.
\end{proof}

\begin{lemma}\label{c2:lem:no-return}
Let $A$ be basic and satisfy $\X$, and choose pairwise orthogonal
primitive idempotents $e_1,\ldots,e_s$ with sum $1$. For $i\ne j$,
\[
 e_iAe_jAe_i=0.
\]
Consequently $A_i=e_iAe_i$ is a quotient algebra of $A$ and satisfies
$\X$.
\end{lemma}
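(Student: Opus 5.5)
The plan is to argue by contradiction using Lemma~\ref{c2:lem:equal-top-socle}(1). From a nonzero element of $e_iAe_jAe_i$ I will build a module with simple top and simple socle, both isomorphic to $S_i$, which nevertheless has $S_j$ as a composition factor. Since $i\neq j$ and $A$ is basic, $S_j\not\cong S_i$, so such a module contradicts the lemma.

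Suppose $e_iAe_jAe_i\neq0$. Then there are $a\in e_iAe_j$ and $b\in e_jAe_i$ with $z=ab\neq0$; here $z\in e_iAe_i\subseteq Ae_i$. Regard $Ae_i$ as a left module. Since $A$ is Artinian, $Ae_i$ has finite length, so among its submodules not containing $z$ there is a maximal one, $N$ (for instance $N=0$ is a candidate). Put $M=Ae_i/N$ and write $\bar z,\bar b$ for the images of $z,b$ in $M$.

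I then check three properties of $M$.
\begin{itemize}
\item[(i)] $M$ has simple top $S_i$. It is a nonzero quotient of the local projective $Ae_i$, and it is nonzero because $\bar z\neq0$.
\item[(ii)] $M$ has simple socle $S_i$. By maximality of $N$, every nonzero submodule of $M$ contains $\bar z$. Hence $A\bar z$ is the unique minimal submodule of $M$, so it is simple and equals $\soc M$. Since $e_i\bar z=\bar z\neq0$, this simple module is isomorphic to $S_i$.
\item[(iii)] $S_j$ occurs as a composition factor of $M$. We have $\bar b\in e_jM$, and $\bar b\neq0$ because $a\bar b=\bar z\neq0$. Thus $e_jM\neq0$, which gives $[M:S_j]\neq0$.
\end{itemize}
By Lemma~\ref{c2:lem:equal-top-socle}(1), properties (i) and (ii) force every composition factor of $M$ to be $S_i$, contradicting (iii). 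Hence $e_iAe_jAe_i=0$.

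For the consequence, put $I=A(1-e_i)A$, a two-sided ideal. Since $1-e_i=\sum_{j\ne i}e_j$, the first part gives $e_iIe_i=\sum_{j\neq i}e_iAe_jAe_i=0$. Because $1-e_i\in I$, the quotient $A/I$ is spanned by the image of $e_iAe_i$. The kernel of $e_iAe_i\to A/I$ is $e_iAe_i\cap I=e_iIe_i=0$, so $e_iAe_i\cong A/I$ as rings. Property $\X$ passes to quotient algebras, as noted earlier, so $A_i$ satisfies $\X$.

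The only delicate point is the choice of $N$ maximal avoiding $z$. This choice is what produces both a simple socle and the nonvanishing of $\bar b$. The remaining steps are routine.
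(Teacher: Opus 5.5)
Your proof is correct and follows the same argument as the paper. You produce a quotient of $Ae_i$ with simple top and simple socle both isomorphic to $S_i$ in which the product $ab$ survives, apply Lemma~\ref{c2:lem:equal-top-socle}(1), and get a contradiction from the $e_j$-part; $\X$ then passes to $A_i$ as a quotient of $A$. The only difference is minor: the paper obtains this module as the image of a map $Ae_i\to\D(e_iA)$ built from an $R$-linear functional that does not vanish on $x$, whereas your choice of $Ae_i/N$, with $N$ maximal among submodules avoiding $z$, gives the simple socle directly without using the duality.
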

\begin{proof}
Suppose that $0\ne x\in e_iAe_jAe_i$. There is an $R$-module homomorphism
$\lambda:e_iAe_i\to E$ with $(x)\lambda\ne0$. The formula
\[
 f:Ae_i\longrightarrow\D(e_iA),\qquad
 (b)((p)f)=(bp)\lambda
\]
defines an $A$-module homomorphism, and $(e_i)((x)f)\ne0$.
Put $M=\im f$. Then $\topm M\cong\soc M\cong S_i$ as $A$-modules.
Lemma~\ref{c2:lem:equal-top-socle} implies
that all composition factors of $M$ are isomorphic to $S_i$. Basicness and $j\ne i$ therefore give
$e_jM=0$. Writing $x$ as a sum of products $ab$ with
$a\in e_iAe_j$ and $b\in e_jAe_i$ now gives
$(x)f=\sum a((b)f)=0$, a contradiction.

The map $A\to A_i$, $a\mapsto e_iae_i$, is multiplicative: the omitted
terms in $e_iab e_i$ factor through some $e_j$ with $j\ne i$, and vanish
by what was proved. It is a unital epimorphism when the identity of
$A_i$ is taken to be $e_i$. Quotient permanence proves the last assertion.
\end{proof}

\begin{lemma}\label{2.3}
Let $A$ be a basic Artin algebra satisfying $\X$. Then
\[
 \mathfrak r_i(e_iAe_j)=0=(e_iAe_j)\mathfrak r_j
 \qquad(i\ne j).
\]
\end{lemma}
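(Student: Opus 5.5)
The plan is to argue by contradiction and produce two nonisomorphic indecomposable modules with the same composition factors. It suffices to prove $\mathfrak r_i(e_iAe_j)=0$. The identity $(e_iAe_j)\mathfrak r_j=0$ is the same statement for $A^{\mathrm{op}}$, with the roles of $i$ and $j$ exchanged. Property $\X$ and basicness pass to the opposite algebra, so it follows once the first identity is proved.

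By Lemma~\ref{c2:lem:no-return}, $A_i$ is a quotient of $A$ satisfying $\X$. Lemma~\ref{c2:lem:equal-top-socle}(2) then makes $A_i$ a local Nakayama algebra with $\mathfrak r_i^a=A_it^a=t^aA_i$ for a suitable $t$. Hence $\mathfrak r_i(e_iAe_j)=A_i\,t\,(e_iAe_j)$, and it suffices to show $tx=0$ for every $x\in e_iAe_j$. Suppose instead $tx\neq0$ for some such $x$.

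First I build a module with top $S_j$ and socle $S_i$. Let $W\subseteq Ae_j$ be a submodule maximal with respect to $tx\notin W$, and put $L=Ae_j/W$. Then $L$ is local with top $S_j$, and its socle is simple, generated by the image of $tx$, hence isomorphic to $S_i$. The images $\bar x$ and $t\bar x$ are nonzero, so the submodule $A\bar x$ has top $S_i$ (it is generated by $\bar x\in e_iL$) and socle $S_i$. By Lemma~\ref{c2:lem:equal-top-socle}(1), $A\bar x$ is uniserial with all composition factors $S_i$. Replacing $W$ by a larger submodule, I pass to a quotient $L'$ of $L$ with exactly three composition factors $S_j,S_i,S_i$. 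It has socle $t\bar x$, and its radical is the uniserial module $A\bar x$ of length $2$, a length-$2$ module over the Nakayama algebra $A_i$. I use throughout the vanishing $e_iAe_kAe_i=0$ for $k\neq i$ (Lemma~\ref{c2:lem:no-return}). It forces every element of $e_iJe_j$ to act on $L'$ through $A_i$, so the $A$-structure of $L'$ is determined by the left $A_i$-action on the image of $e_iAe_j$ together with the uniserial $A_i$-module $\operatorname{rad}L'$.

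Next I produce a second indecomposable module with the same composition factors, not isomorphic to $L'$. The candidate is an indecomposable module $N$ with factors $S_j,S_i,S_i$ whose top is not $S_j$. Possible choices are a nonsplit extension of $S_j$ by $S_i$ glued to a further $S_i$, or the dual construction applied to $\D(e_jA)$, which gives a module with socle $S_j$-side data; either way $N$ would have a different top or socle from $L'$. Alternatively I would build $N$ with top $S_j$ but radical $S_i\oplus S_i$. This would use an element $x'\in e_iAe_j$ outside $A_itx$, and exploit the fact that $\bar x$ and $t\bar x$ are linearly independent over $D_i$ in $e_iL'/e_iJ^2L'$ to separate the two $S_i$'s. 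Once such an $N$ exists, the top or socle distinguishes it from $L'$, contradicting $\X$.

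I expect the main obstacle to be exactly this second module. The existence of $N$ is the delicate point, because $e_jAe_i$ may vanish, which blocks the naive module with top $S_i$. I would first try the radical-$S_i\oplus S_i$ version. To construct it, I would take the pushout of $0\to\operatorname{rad}L'\to L'\to S_j\to0$ along a suitable map, or the quotient of $Ae_j$ by the submodule generated by $t\,\bar x-\bar x$-type relations with an appropriate twist. I would then check indecomposability with the brick-extension criterion Lemma~\ref{c2:lem:brick-extension}.
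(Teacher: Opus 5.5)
Your overall strategy (contradict $\X$ with two nonisomorphic indecomposables that have the same composition factors) is the paper's, but the reduction to a module $L'$ with composition factors exactly $S_j,S_i,S_i$ fails. Because $W$ is maximal with $tx\notin W$, every strictly larger submodule contains $tx$. Equivalently, $\soc L$ is simple and lies in $A\bar x$, so every nonzero submodule of $L$ meets $A\bar x$, and every proper quotient of $L$ kills $t\bar x$. (Also, $A\bar x$ already has length $2$, since $J\bar x=\mathfrak r_i\bar x$ lies in the simple socle.) Thus $L'$ exists only if $\rad L=A\bar x$ from the start. In general, no module with top $S_j$ and composition factors $S_j,S_i,S_i$ exists at all. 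Take the bound quiver algebra over a field with primitive idempotents $e_i,e_j,e_k$ and arrows $x\in e_iJe_j$, $\beta\in e_kJe_j$, $\delta\in e_iJe_k$, $t\in\mathfrak r_i$, subject to $t^2=0=t\delta$ and $tx=\delta\beta$. A module $Q$ with top $S_j$ and no composition factor $S_k$ is a quotient of $Ae_j$ in which $\beta\bar e_j\in e_kQ=0$. Hence $t\bar x=\delta\beta\bar e_j=0$ and $[Q:S_i]\le1$. Yet Lemmas~\ref{c2:lem:equal-top-socle}(2) and~\ref{c2:lem:no-return}, and everything else you derived, hold for this algebra; here $L=Ae_j$ has composition factors $S_j,S_k,S_i,S_i$. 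So the contradiction must come from a module that may carry further composition factors.

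Second, the module $N$, which is the heart of the proof, is never constructed, and the variant you would try first does not exist in general. A module with top $S_j$ and radical $S_i\oplus S_i$ is annihilated by $J^2$, hence is a quotient of $Ae_j/J^2e_j$. Its $e_i$-part $e_iJe_j/e_iJ^2e_j$ is one-dimensional over $D_i$ for the algebra with just the arrow $x$ and a loop $t$ with $t^2=0$. Your claimed independence of $\bar x$ and $t\bar x$ in $e_iL'/e_iJ^2L'$ is false, since $t\bar x\in J^2L'$. The candidate you mention only in passing, with top $S_j\oplus S_i$, is the right one. When $\rad L=A\bar x$ it even finishes the proof via Lemma~\ref{c2:lem:brick-extension}, since $L/\soc L$ is then a brick. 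In general it has to be built inside a longer module, as the paper does. Put $P_0=Ae_j/A(1-e_i)Ae_iAe_j$. By Lemma~\ref{c2:lem:no-return}, $e_iP_0\cong e_iAe_j$ is an $A$-submodule on which $A$ acts through $A_i$, so it maps onto $T=A_i/\mathfrak r_i^2$. The quotient $M$ of $P_0$ by the kernel of this map has top $S_j$ and $e_iM=T\subseteq JM$. Gluing $M/\soc T$ and $T$ along $T/\soc T\cong\soc T$ by a pushout gives $N$ with the same composition factors as $M$ but $\topm N\cong S_j\oplus S_i$. Indecomposability is then not a direct instance of Lemma~\ref{c2:lem:brick-extension}, since nothing makes $M/\soc T$ a brick. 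Instead, the indecomposable submodules $e_iN\cong T$ and $Ae_jN\cong M/\soc T$ are stable under every endomorphism, meet in $\soc T\ne0$, and span $N$. Hence every idempotent endomorphism of $N$ is $0$ or $1$.
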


\begin{proof}
Since property $\X$ is invariant under passage to the
opposite algebra, it suffices to prove
$\mathfrak r_i(e_iAe_j)=0$ for $i\ne j$.

Suppose that
$\mathfrak r_i(e_iAe_j)\ne0$.
We derive a contradiction in two steps. First, we construct
$A$-modules $M$ and $N$ such that $M$ is indecomposable and $N$
has the same composition factors as $M$ but is not isomorphic
to it.

By Lemma~\ref{c2:lem:no-return},
\[
 \pi: A\longrightarrow e_iAe_i,\qquad a\mapsto e_iae_i
\]
is a surjective ring homomorphism. Hence $e_iAe_i$ satisfies $\X$
and is a local Nakayama algebra by Lemma~\ref{c2:lem:equal-top-socle}(2).

Let
\[
 U=A(1-e_i)Ae_iAe_j,\qquad P_0=Ae_j/U.
\]
Since $A$ is basic and $i\ne j$, we have
$e_iAe_j=e_iJe_j\subseteq Je_j$, and hence $U\subseteq Je_j$.
Thus $\operatorname{top}P_0\cong S_j$ as $A$-modules.
Lemma~\ref{c2:lem:no-return} also gives
\[
 e_iU=e_iA(1-e_i)Ae_iAe_j=0,
\]
so $e_iP_0\cong e_iAe_j$ as $e_iAe_i$-modules.
Since
$(1-e_i)Ae_iP_0=0$, $e_iP_0$ is an $A$-submodule of $P_0$, and its action is
given by $\pi$.

Since $\mathfrak r_i(e_iP_0)\ne0$, the $A_i$-module $e_iP_0$ has an indecomposable direct summand of length
at least two. Thus there is an epimorphism
\[
 \varphi:e_iP_0\twoheadrightarrow
 T:=(e_iAe_i)/\mathfrak r_i^2.
\]
Regard $T$ as an $A$-module via $\pi$.
Put $K=\ker\varphi$ and $M=P_0/K$.
The inclusion $e_iP_0\subseteq P_0$ induces an embedding
\[
 \lambda: T\cong e_iP_0/K\hookrightarrow P_0/K=M
\]
whose image is $e_iM$. Since $K\subseteq e_iP_0\subseteq JP_0$,
we have $\operatorname{top}M\cong S_j$, so $M$ is indecomposable.
We identify $T$ with $e_iM\subseteq JM$.
As an $A$-module, $T$ is uniserial of length two, and every composition factor of $T$ is isomorphic to $S_i$.

Consider the pushout diagram in $A\modu$:
\[
\begin{tikzcd}[column sep=large, row sep=large]
0 \arrow{r}
  & T/S_i \arrow[hook]{r}{\bar{\lambda}} \arrow[hook]{d}{\theta}
  & M/S_i \arrow{r}{q} \arrow[hook]{d}{\alpha}
  & M/T \arrow{r} \arrow[double, double distance=1pt, -]{d}
  & 0 \\
0 \arrow{r}
  & T \arrow[hook]{r}{\beta}
  & N \arrow{r}{\rho}
  & M/T \arrow{r}
  & 0 ,
\end{tikzcd}
\]
where $\bar{\lambda}$ is induced by $\lambda$ and $\theta$ is the canonical inclusion. This gives the following exact sequence in $A\modu$:
\[
 0\longrightarrow T/S_i
 \xrightarrow{(\bar{\lambda},-\theta)}(M/S_i)\oplus T
 \longrightarrow N\longrightarrow0.
\]
Thus $M$ and $N$ have the same composition factors.
Since $\im (\bar{\lambda},-\theta)$ is contained in $J((M/S_i)\oplus T)$, $\topm(N)\cong S_j\oplus S_i$ as $A$-modules. Therefore $M\not\cong N$ as $A$-modules.

Second, we prove that $N$ is an indecomposable $A$-module, which gives a contradiction to property $\X$.

We note that $\im\alpha\cong M/S_i$ and
$\im\beta\cong T$ are indecomposable $A$-modules, and
\[
 \im\alpha\cap\im\beta
 =(\ker q)\alpha
 =(T/S_i)\bar{\lambda}\alpha
 =(T/S_i)\theta\beta
 =(S_i)\beta\ne0.
\]
Since $\Hom_A(Ae_i,\bar{\lambda})$ is an isomorphism and $e_iT=T$, $e_iN=\im\beta$.
Note that $M/S_i$ is a quotient of $Ae_j$ and $\Hom_A(Ae_j,\alpha)$ is an isomorphism. It follows that $A(e_jN)=\im\alpha$.
Thus both $\im\alpha$ and $\im\beta$ are preserved by every $A$-module endomorphism of $N$. In particular, every idempotent endomorphism of $N$ preserves them.

Let $h$ be an idempotent endomorphism of $N$.
Its restriction to each of these indecomposable submodules is
either zero or the identity. Their intersection is nonzero,
so the two restrictions are either both zero or both identities.
Since $\im\alpha+\im\beta=N$,
we obtain $h=0$ or $h=1_N$.
Hence $N$ is indecomposable. This completes the proof.
\end{proof}

We now use these structural restrictions to construct the
finite-dimensional algebra required for the reduction.
Retain the commutative artinian local base $R$ fixed above, with residue
field $k=R/\mathfrak m$. The construction replaces each primitive corner
while preserving the off-diagonal data. The resulting correspondence
on module isomorphism classes preserves direct sums and composition
multiplicities.

We first recall the definition of a \emph{skew polynomial algebra}. Let $D$ be a division $k$-algebra and let $\sigma$ be a $k$-algebra endomorphism of $D$.
The \emph{skew polynomial algebra} $D[X;\sigma]$ consists of all formal
finite sums
\[
 \sum_{m\geq 0} a_m X^m,\qquad a_m\in D,
\]
with coefficientwise addition and multiplication determined by
\[
 (a X^m)(b X^n)
 =a\bigl((b)\sigma^m\bigr)X^{m+n}
 \qquad(a,b\in D,\ m,n\geq 0),
\]
where $\sigma^0=\mathrm{id}_D$.

\begin{lemma}\label{c2:lem:replacement}

Let $A$ be a basic Artin $R$-algebra satisfying $\X$.
There exists a finite-dimensional basic $k$-algebra $A^0$ and a
bijection between the isomorphism classes of finitely generated
left $A$-modules and those of finitely generated left $A^0$-modules
which is compatible with direct sums and preserves composition
multiplicities under a bijection between the isomorphism classes
of simple modules.
In particular, $A^0$ satisfies $\X$, and $A$ is representation-finite
if and only if $A^0$ is representation-finite.
\end{lemma}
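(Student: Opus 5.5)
The plan is to describe $A$-modules by linear-algebra data that depend on the primitive corners $A_i$ only through a small invariant, and then to build $A^0$ with the same invariant.

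\textbf{The decomposition of $A$.} Write $B_{ij}=e_iAe_j$ for $i\ne j$, so that $A=\bigoplus_iA_i\oplus\bigoplus_{i\ne j}B_{ij}$. By Lemma~\ref{c2:lem:no-return} and Lemma~\ref{c2:lem:equal-top-socle}(2), each $A_i$ is a local Nakayama algebra. Let $n_i$ be its Loewy length, and choose $t_i$ with $\mathfrak r_i^a=A_it_i^a=t_i^aA_i$. By Lemma~\ref{2.3}, each $B_{ij}$ is a $D_i$-$D_j$-bimodule. The multiplication then reduces to three pieces:
\begin{itemize}
\item the multiplication of the $A_i$;
\item $D_i$-$D_l$-bilinear maps $\mu_{ijl}:B_{ij}\otimes_{D_j}B_{jl}\to B_{il}$ for $i,j,l$ pairwise distinct;
\item zero products $B_{ij}B_{ji}=0$, which holds by Lemma~\ref{c2:lem:no-return}.
\end{itemize}
Since $\mathfrak m$ annihilates the simple $D_i$, each $D_i$ is a division $k$-algebra. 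Conjugation by $t_i$ defines a $k$-algebra endomorphism $\sigma_i$ of $D_i$ through $t_ia\equiv (\bar a)\sigma_i\,t_i \pmod{\mathfrak r_i^2}$. Iterating gives $t_i^{m}a\equiv(\bar a)\sigma_i^{m}t_i^{m}\pmod{\mathfrak r_i^{m+1}}$.

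\textbf{The algebra $A^0$.} Define
\[
A^0=\bigoplus_i A_i^0\oplus\bigoplus_{i\ne j}B_{ij},\qquad A_i^0=D_i[X;\sigma_i]/(X^{n_i}),
\]
keeping the bimodules $B_{ij}$ and the maps $\mu_{ijl}$, and letting $X$ act as zero on every $B_{ij}$. Associativity reduces to that of the old data, since all mixed products involving radicals vanish. This $A^0$ is a finite-dimensional basic $k$-algebra. In the same way as for $A$, its corners are local Nakayama with the same $(D_i,\sigma_i,n_i)$.

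\textbf{The data category.} Let $M$ be an $A$-module and put $M_i=e_iM$, an $A_i$-module. Because $B_{ij}\mathfrak r_j=0$ and $\mathfrak r_iB_{ij}=0$, the action of $B_{ij}$ is a $D_i$-linear map
\[
\phi_{ij}:B_{ij}\otimes_{D_j}\topm M_j\longrightarrow \soc M_i,
\]
where $\topm M_j=M_j/\mathfrak r_jM_j$. These maps satisfy the compatibility $\phi_{ij}(1\otimes\phi_{jl})=\phi_{il}(\mu_{ijl}\otimes 1)$, which should be read through the induced maps on top and socle. The relation $B_{ij}B_{ji}=0$ forces $\phi_{ij}$ and $\phi_{ji}$ to compose to zero. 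Conversely, every such tuple $(M_i,\phi_{ij})$ defines an $A$-module, and a morphism of $A$-modules is a family of $A_i$-maps $M_i\to M_i'$ commuting with the $\phi$'s. The same description holds verbatim for $A^0$.

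\textbf{Matching iso classes.} Decompose $M_i\cong\bigoplus_m (A_i/\mathfrak r_i^m)^{c_{i,m}}$. The top of $A_i/\mathfrak r_i^m$ is $D_i$. Its socle $\mathfrak r_i^{m-1}/\mathfrak r_i^m$ is $D_i$ with the module structure twisted by $\sigma_i^{m-1}$, in view of the congruence above. The isomorphism class of $M$ is then the orbit of the tuple $(\phi_{ij})$ under the image $G$ of $\prod_i\operatorname{Aut}_{A_i}(M_i)$ acting on the tops and socles. The crux, and the step I expect to be the main obstacle, is to show that this image, and more generally the induced map
\[
\End_{A_i}(M_i)\longrightarrow \End(\topm M_i)\times\End(\soc M_i),
\]
is the same subring for $A_i$ and for $A_i^0$. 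I would compute it on indecomposables. A map $A_i/\mathfrak r_i^m\to A_i/\mathfrak r_i^{m'}$ is right multiplication by some $a\in A_i$ with $\mathfrak r_i^{m}a\subseteq\mathfrak r_i^{m'}$. Its effect on the top is nonzero only when $m'\le m$ and $a$ is a unit, and then it is multiplication by $\bar a$. Its effect on the socle is nonzero only when $m\le m'$. It is then multiplication by $(\bar a)\sigma_i^{m-1}$ for a unit $a$ when $m=m'$. For $m<m'$ and $a=bt_i^{m'-m}$, it is a twisted multiplication by $\bar b$, where the twist again uses only $\sigma_i$. Both descriptions involve only $D_i$, $\sigma_i$ and the lengths, so they coincide for $A_i$ and $A_i^0$. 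The only genuine check is the mixed entries between indecomposables of different lengths, including length one where top equals socle.

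\textbf{Conclusion.} Once the images agree, the correspondence sends $(\bigoplus_m (A_i/\mathfrak r_i^m)^{c_{i,m}},\phi)$ to $(\bigoplus_m(A_i^0/X^m)^{c_{i,m}},\phi)$. This gives a well-defined bijection on isomorphism classes. It is compatible with direct sums, because the data are additive. It preserves composition multiplicities under $S_i\leftrightarrow$ the $i$-th simple $A^0$-module, because $[M:S_i]=\sum_m m\,c_{i,m}$ on both sides. A bijection compatible with direct sums preserves indecomposability. Hence $\X(A)$ gives $\X(A^0)$, and the two algebras have the same number of indecomposables.
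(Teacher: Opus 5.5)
Your proposal is correct and follows the paper's proof in substance. It uses the same replacement $A^0=\prod_i D_i[X_i;\sigma_i]/(X_i^{\ell_i})\oplus H$. It describes modules the same way, as $B$-modules with an $H$-action $H\otimes_D\topm V\to\soc V$; these are the paper's cleft-extension pairs $(V,\alpha)$, with the compatibility condition passing through $\soc V\to\topm V$. And it rests on the same key check: the images of $\End_{A_i}(M_i)$ and $\End_{A_i^0}(M_i^0)$ in $\End(\topm)\times\End(\soc)$ coincide, which is the paper's equivalence $\mathcal E_{A_i}\simeq\mathcal E_{A_i^0}$ of quotients by $\mathcal N$.

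The only difference is packaging. You read off isomorphism classes directly as orbits of the image of $\operatorname{Aut}_B(V)$, using that an endomorphism is invertible exactly when its top is. The paper instead passes to the categories $\overline{\mathcal C}_\Lambda$ and invokes Pirashvili's lemma for full dense functors with nilpotent kernel.
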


\begin{proof}
If $A=0$, take $A^0=0$. Suppose that $A\ne0$.
We divide the proof into four steps.

\smallskip
\noindent\emph{(1) Construction of $A^0$.}

Retain the notation fixed above. Let $\ell_i$ be the Loewy length of $A_i$.
By Lemmas~\ref{c2:lem:equal-top-socle}
and~\ref{c2:lem:no-return}, each $A_i$ is a local Nakayama algebra.
Moreover, Lemmas~\ref{c2:lem:no-return} and~\ref{2.3} give
\begin{equation}\label{c2:eq:cleft-annihilation}
 e_iAe_jAe_i=0,\qquad
 \mathfrak r_ie_iAe_j=0=e_iAe_j\mathfrak r_j
 \qquad(i\ne j).
\end{equation}

Set
\[
 B=\prod_{i=1}^sA_i,\qquad D=\prod_{i=1}^sD_i,
 \qquad H=\bigoplus_{i\ne j}e_iAe_j,
\]
and let $\pi:B\twoheadrightarrow D$ be the natural epimorphism.
The map $(b_i)_i\mapsto\sum_i b_i$ identifies $B$ with a unital
subalgebra of $A$. Clearly, $A=B\oplus H$
as $R$-modules and $H$ is a two-sided ideal
of $A$. Since $A$ is basic, $e_iAe_j=e_iJe_j$ for $i\neq j$. Thus $H\subset J$, so $H$ is nilpotent. The equalities \eqref{c2:eq:cleft-annihilation} give
$J(B)H=0=HJ(B).$
Consequently $H$ is a $D$-$D$-bimodule, and multiplication induces a $D$-$D$-bimodule homomorphism
\[
 \mu:H\otimes_DH\longrightarrow H,
 \qquad (h\otimes h')\mu=hh'.
\]
Under the decomposition
$A=B\oplus H$, the multiplication of $A$ is
\begin{equation*}
 (b,h)(b',h')
 =\bigl(bb',\ (b)\pi h'+h(b')\pi+(h\otimes h')\mu\bigr).
\end{equation*}

Since $\mathfrak m A_i$ is a nilpotent ideal of $A_i$, we have
$\mathfrak m A_i\subseteq\mathfrak r_i$ and
$\mathfrak m\mathfrak r_i\subseteq\mathfrak r_i^2$.
The image of $R$ in $A$ lies in $B$, and the image of
$\mathfrak m$ lies in $J(B)$. Hence $\mathfrak mH=0$.
It follows that $D_i$, $H$ and $\mathfrak r_i/\mathfrak r_i^2$
are finite-dimensional $k$-vector spaces, and their left and right
scalar actions agree.

Suppose first that $\ell_i>1$. Then $\mathfrak r_i/\mathfrak r_i^2$ has dimension one as
both a left and a right $D_i$-vector space. By Lemma~\ref{c2:lem:equal-top-socle}, we can choose $0\ne\xi_i\in \mathfrak r_i/\mathfrak r_i^2$ such that $\xi_i(A_i/\mathfrak r_i^2)=(A_i/\mathfrak r_i^2)\xi_i$.
For every $d\in A_i$, there is an element $d'\in A_i$ such that $\xi_i \bar{d} =\bar{d'} \xi_i$ in $A_i/\mathfrak r_i^2$. Define $\sigma_i:D_i\to D_i$ by $(\bar{d})\sigma_i=\bar{d'}$. It is not difficult to see that $\sigma_i$ is a well-defined $k$-algebra automorphism of $D_i$.
Define $A_i^0=D_i[X_i;\sigma_i]/(X_i^{\ell_i})$. By \cite[Lemma~4.1]{AAS2003} and \cite[Theorem~2.1]{EG1971}, $A_i^0$ is a local Nakayama algebra of Loewy
length $\ell_i$. We denote $J(A_i^0)$ by $\mathfrak r_i^0$.
Note that $A_i^0/\mathfrak r_i^0\cong D_i$ as $k$-algebras and the map
\begin{equation}\label{c2:eq:cleft-first-layer}
 \mathfrak r_i/\mathfrak r_i^2\longrightarrow \mathfrak r_i^0/(\mathfrak r_i^0)^2,
 \qquad d\xi_i\longmapsto dX_i+(\mathfrak r_i^0)^2
\end{equation}
is a $D_i$-$D_i$-bimodule isomorphism.
For $\ell_i=1$, we set $A_i^0=D_i$.

Let $B^0=\prod_iA_i^0$, and let
$\pi^0:B^0\twoheadrightarrow D$ be the natural epimorphism. Regard $H$ as a $B^0$-bimodule via $\pi^0$. Then
\begin{equation}\label{sec5u}
J(B^0)H=0=HJ(B^0).
\end{equation}
Define $A^0$ on the $k$-vector space $B^0\oplus H$ by
\begin{equation}\label{c2:eq:cleft-new-product}
 (b,h)(b',h')
 =\bigl(bb',\ (b)\pi^0 h'+h(b')\pi^0+(h\otimes h')\mu\bigr).
\end{equation}
Thus $A^0$ is a finite-dimensional $k$-algebra with identity $(1,0)$ and
\begin{equation}\label{c2:eq:cleft-new-radical}
 J(A^0)=J(B^0)\oplus H,\qquad A^0/J(A^0)\cong D.
\end{equation}
In particular, $A^0$ is basic.
Let $\varepsilon_i\in B^0$ be the identity of the $i$th factor,
and put $e_i^0=(\varepsilon_i,0)\in A^0$.
Then
\begin{equation}\label{sec5u2}
 e_i^0A^0e_i^0\cong A_i^0,\qquad
 e_i^0A^0e_j^0\cong e_iAe_j,\qquad e_i^0He_j^0He_i^0=0 \quad(i\ne j),\qquad e_i^0He_i^0=0.
\end{equation}

\smallskip

\noindent\emph{(2) Comparison of quotient categories.}

For an Artin algebra $\Lambda$, define an ideal of the additive category $\Lambda\modu$ by
\[
 \mathcal N_\Lambda(V,V')
 =\{f\in\Hom_\Lambda(V,V'):
       \topm_\Lambda(f)=0=\soc_\Lambda(f)\}.
\]
Write $\mathcal E_\Lambda=(\Lambda\modu)/\mathcal N_\Lambda$, and let $q_\Lambda:\Lambda\modu\longrightarrow\mathcal E_\Lambda$ denote the quotient functor.
The quotient has the same objects as $\Lambda\modu$, with
\[
 \Hom_{\mathcal E_\Lambda}(V,V')
 =
 \Hom_\Lambda(V,V')/\mathcal N_\Lambda(V,V').
\]
Denote the class of $f$ by $[f]$. Thus $[f]=[g]$ precisely
when $f$ and $g$ induce the same maps on both tops and socles.
Moreover, if $J(\Lambda)^d=0$, then $\mathcal N_\Lambda^d=0$:
for $f\in\mathcal N_\Lambda(V,V')$,
\[
 (J(\Lambda)^aV)f\subseteq J(\Lambda)^{a+1}V'
 \qquad(a\geq0),
\]
and the assertion follows by iteration.

We first construct an additive equivalence
\[
 F_i:\mathcal E_{A_i}\longrightarrow\mathcal E_{A_i^0}
\]
for each $i$.
Fix $i$. By Step~(1), $A_i$ and $A_i^0$ are local Nakayama
algebras of Loewy length $\ell_i$. Their indecomposable
finitely generated modules are therefore
\[
 U_p=A_i/\mathfrak r_i^p,\qquad
 U_p^0=A_i^0/(\mathfrak r_i^0)^p
 \qquad(1\leq p\leq\ell_i),
\]
respectively.
On these objects define
\begin{equation*}
 F_i(U_p)=U_p^0\qquad(1\leq p\leq\ell_i).
\end{equation*}
If $\ell_i=1$, take $F_i$ to be the identity under the
identifications $A_i\cong D_i\cong A_i^0$.
Suppose that $\ell_i>1$.
Choose a lift $x_i\in\mathfrak r_i$ of $\xi_i$.
By Nakayama's lemma, $x_i$ has the generating property
in Lemma~\ref{c2:lem:equal-top-socle}(2):
\[
 \mathfrak r_i^a=A_ix_i^a=x_i^aA_i\qquad(a\geq0).
\]
We regard each $\mathfrak r_i^a/\mathfrak r_i^{a+1}$ as a
$D_i$-bimodule via the actions induced by $A_i$.
The definition of $\sigma_i$ in Step~(1) gives, by induction,
the following equality in $\mathfrak r_i^a/\mathfrak r_i^{a+1}$:
\begin{equation}\label{c2:eq:fi-layer-relation}
 (x_i^a+\mathfrak r_i^{a+1})d
 =\bigl((d)\sigma_i^a\bigr)
   (x_i^a+\mathfrak r_i^{a+1})
 \qquad(0\leq a<\ell_i,\ d\in D_i).
\end{equation}
The corresponding relation in $A_i^0$ is
$X_i^ad=(d)\sigma_i^a\,X_i^a$, consistently with
\eqref{c2:eq:cleft-first-layer}.

To define $F_i$ on morphisms, consider $f:U_p\to U_q$.
Write
\[
 (a+\mathfrak r_i^p)f=ab+\mathfrak r_i^q,\qquad
 m=\max\{q-p,0\}.
\]
As verified below, $b\in\mathfrak r_i^m$, and a unique
$c\in D_i$ is determined by the equality
\[
 b+\mathfrak r_i^{m+1}
 =c(x_i^m+\mathfrak r_i^{m+1})
 \quad\text{in }\mathfrak r_i^m/\mathfrak r_i^{m+1}.
\]
Associate to $f$ the homomorphism
\begin{equation}\label{c2:eq:fi-component}
 f^0:U_p^0\longrightarrow U_q^0,\qquad
 (a+(\mathfrak r_i^0)^p)f^0
 =ac\,X_i^m+(\mathfrak r_i^0)^q.
\end{equation}
This is well defined since $p+m\geq q$.
In particular, if $p=q$, then $f^0$ is right multiplication
by $c$.
Set
\begin{equation*}
 F_i([f])=[f^0].
\end{equation*}

We verify that this defines an equivalence.
Evaluation at $1+\mathfrak r_i^p$ gives an isomorphism
of additive groups
\begin{equation}\label{c2:eq:fi-hom}
 \Hom_{A_i}(U_p,U_q)
 \cong\mathfrak r_i^m/\mathfrak r_i^q.
\end{equation}
Indeed, the necessary and sufficient condition on $b$ is
$\mathfrak r_i^p b\subseteq\mathfrak r_i^q$.
It holds for every $b\in\mathfrak r_i^m$.
Conversely, if
$b\in\mathfrak r_i^j\setminus\mathfrak r_i^{j+1}$ with
$j<q-p$, write $b=x_i^ju$ with $u$ a unit.
Then $x_i^pb=x_i^{p+j}u\notin\mathfrak r_i^q$.
This proves \eqref{c2:eq:fi-hom} and justifies the coefficient
$c$ used above.

Identify the tops of $U_p$ and $U_p^0$ with $D_i$.
Their socles are $\mathfrak r_i^{p-1}/\mathfrak r_i^p$
and $(\mathfrak r_i^0)^{p-1}/(\mathfrak r_i^0)^p$,
respectively; identify these with $D_i$ by
\[
 d\longmapsto d(x_i^{p-1}+\mathfrak r_i^p),
 \qquad
 d\longmapsto d(X_i^{p-1}+(\mathfrak r_i^0)^p).
\]
Under these identifications,
\eqref{c2:eq:fi-layer-relation} yields, for $d\in D_i$,
\begin{equation*}
 \begin{array}{c|c|c}
      &(d)\topm_{A_i}(f)&(d)\soc_{A_i}(f)\\ \hline
 p<q&0&d\bigl((c)\sigma_i^{p-1}\bigr)\\
 p>q&dc&0\\
 p=q&dc&d\bigl((c)\sigma_i^{p-1}\bigr).
 \end{array}
\end{equation*}
For the socle calculation when $p\leq q$, the equality in
$\mathfrak r_i^{q-1}/\mathfrak r_i^q=\soc_{A_i}(U_q)$ is
\[
 x_i^{p-1}b+\mathfrak r_i^q
 =\bigl((c)\sigma_i^{p-1}\bigr)
   (x_i^{q-1}+\mathfrak r_i^q).
\]
When $p>q$, $x_i^{p-1}b+\mathfrak r_i^q=0$ in $U_q$.
The same calculation applies to homomorphisms $U_p^0\to U_q^0$.
Every such homomorphism induces the same maps on tops and
socles as a unique homomorphism $f^0$ of the form
\eqref{c2:eq:fi-component}.
Since $\sigma_i$ is an automorphism by Step~(1), it follows that
\[
 \mathcal N_{A_i}(U_p,U_q)
 \cong\mathfrak r_i^{m+1}/\mathfrak r_i^q
\]
under \eqref{c2:eq:fi-hom}.
Consequently $c$ depends only on $[f]$, and
$[f]\mapsto[f^0]$ gives an additive bijection
\[
 \Hom_{\mathcal E_{A_i}}(U_p,U_q)
 \xrightarrow{\sim}
 \Hom_{\mathcal E_{A_i^0}}(U_p^0,U_q^0).
\]
Since a morphism class is determined by its induced maps on tops and socles, the preceding identifications preserve identities and composition. Extending the construction to finite
direct sums by matrices gives a fully faithful additive functor $F_i$. Every finitely generated $A^0_i$-module is a finite direct sum of the modules $U_p^0$, so $F_i$
is dense and hence an equivalence.

Using the product decompositions from Step~(1),
we obtain an additive equivalence
\begin{equation*}
 E:\mathcal E_B\xrightarrow{\sim}\mathcal E_{B^0}.
\end{equation*}

For $\Lambda=B,B^0$, let
$\overline \topm_\Lambda:\mathcal E_\Lambda\to D\modu$
be induced by $\topm_\Lambda$, and let
$\iota_\Lambda:D\modu\to\mathcal E_\Lambda$ be restriction
of scalars along $\Lambda\twoheadrightarrow D$ followed
by $q_\Lambda$.
It is fully faithful, since $\mathcal N_\Lambda$ vanishes
on homomorphisms with semisimple source.
The construction preserves the maps on tops and, on
semisimple modules, the matrices of $D_i$-module homomorphisms.
The additive extension therefore gives natural isomorphisms
\begin{equation}\label{c2:eq:cleft-functor-compatibility}
 \overline \topm_{B^0}E\cong\overline \topm_B,\qquad
 E\iota_B\cong\iota_{B^0}.
\end{equation}
On semisimple modules these identifications are $D$-module isomorphisms.

\smallskip

\noindent\emph{(3) Comparison of module structures.}
For $\Lambda\in\{B,B^0\}$, put $F_\Lambda=H\otimes_\Lambda-$,
and let $\eta^\Lambda:F_\Lambda^2\to F_\Lambda$ be induced by
the multiplication on $H$:
\[
 (h\otimes(h'\otimes v))\eta^\Lambda_V=hh'\otimes v.
\]
Following \cite[\S2.2]{Beligiannis2000}, let
$\mathcal C_\Lambda$ be the additive category whose objects
are pairs $(V,\alpha)$, where $V\in\Lambda\modu$ and
$\alpha:F_\Lambda V\to V$ satisfies
\begin{equation*}
 F_\Lambda(\alpha)\alpha=\eta^\Lambda_V\alpha,
\end{equation*}
and whose morphisms $(V,\alpha)\to(V',\alpha')$ are
$\Lambda$-module homomorphisms $f:V\to V'$ satisfying
\begin{equation*}
 \alpha f=F_\Lambda(f)\alpha'.
\end{equation*}
Composition, addition and finite direct sums are inherited
from $\Lambda\modu$.
The description of modules over cleft extensions in
\cite[\S2.3]{Beligiannis2000} gives additive equivalences
\begin{equation}\label{c2:eq:cleft-module-equivalences}
 A\modu\cong\mathcal C_B,
 \qquad A^0\modu\cong\mathcal C_{B^0}.
\end{equation}

Since $HJ(\Lambda)=0=J(\Lambda)H$, we have a natural
isomorphism
\begin{equation}\label{c2:eq:cleft-tensor-top}
 F_\Lambda V\cong H\otimes_D\topm_\Lambda V.
\end{equation}

Note that $F_\Lambda V$ is semisimple, so
$\soc_\Lambda(F_\Lambda V)=F_\Lambda V$.
Every homomorphism $g:F_\Lambda V\to V'$ has image in
$\soc_\Lambda V'$, so $\soc_\Lambda(g)$ is the same map
with codomain restricted to $\soc_\Lambda V'$.
In particular, $\soc_\Lambda(g)=0$ implies $g=0$.
By the definition of $\mathcal N_\Lambda$, this proves
\begin{equation}\label{c2:eq:cleft-vanishing-ideal}
 \mathcal N_\Lambda(F_\Lambda V,V')=0
 \qquad(V,V'\in\Lambda\modu).
\end{equation}
Since morphism groups in $\mathcal E_\Lambda$ are obtained
by quotienting by $\mathcal N_\Lambda$, $q_\Lambda$
therefore induces an isomorphism
\begin{equation}\label{c2:eq:cleft-unique-lift}
 \Hom_\Lambda(F_\Lambda V,V')
 \xrightarrow{\sim}
 \Hom_{\mathcal E_\Lambda}
       (q_\Lambda F_\Lambda V,q_\Lambda V').
\end{equation}

Since $F_\Lambda(\mathcal N_\Lambda)=0$, we have an
endofunctor $\overline F_\Lambda$ of $\mathcal E_\Lambda$:
\[
 \overline F_\Lambda(q_\Lambda V)=q_\Lambda F_\Lambda V,
 \qquad
 \overline F_\Lambda(q_\Lambda f)=q_\Lambda F_\Lambda(f).
\]
Applying $q_\Lambda$ to the components of $\eta^\Lambda$
gives $\overline\eta^\Lambda:
\overline F_\Lambda^2\to\overline F_\Lambda$.
Let $\overline{\mathcal C}_\Lambda$ have objects $(X,a)$,
where $X\in\mathcal E_\Lambda$ and
$a:\overline F_\Lambda X\to X$ satisfies
\[
 \overline F_\Lambda(a)a=\overline\eta^\Lambda_Xa.
\]
A morphism $(X,a)\to(Y,b)$ is a morphism $u:X\to Y$
in $\mathcal E_\Lambda$ satisfying
$au=\overline F_\Lambda(u)b$.
The quotient functor induces an additive functor
\[
 Q_\Lambda:\mathcal C_\Lambda\longrightarrow
              \overline{\mathcal C}_\Lambda,
 \qquad
 (V,\alpha)\longmapsto(q_\Lambda V,q_\Lambda\alpha),
 \quad f\longmapsto q_\Lambda f.
\]
By \eqref{c2:eq:cleft-unique-lift}, every structure map
$a:\overline F_\Lambda(q_\Lambda V)\to q_\Lambda V$
has a unique lift $\alpha:F_\Lambda V\to V$.
The difference
$F_\Lambda(\alpha)\alpha-\eta^\Lambda_V\alpha$
is annihilated by $q_\Lambda$ and has semisimple source
$F_\Lambda^2V$, so it is zero by
\eqref{c2:eq:cleft-vanishing-ideal}.
Similarly, a morphism between the images of
$(V,\alpha)$ and $(V',\alpha')$ has a representative
$f:V\to V'$, and
\[
 \alpha f-F_\Lambda(f)\alpha':F_\Lambda V\longrightarrow V'
\]
is annihilated by $q_\Lambda$ and hence is zero.
Thus $Q_\Lambda$ is full and dense.
Its kernel consists of the morphisms whose underlying
$\Lambda$-module homomorphisms belong to $\mathcal N_\Lambda$.
Consequently $(\ker Q_\Lambda)^L=0$ for
$L=\max_i\ell_i$, by Step~(2).

It remains to compare $\overline{\mathcal C}_B$ and
$\overline{\mathcal C}_{B^0}$.
Equation~\eqref{c2:eq:cleft-tensor-top} gives
\[
 \overline F_\Lambda
 \cong\iota_\Lambda(H\otimes_D-)\overline{\topm}_\Lambda.
\]
Use the identifications in
\eqref{c2:eq:cleft-functor-compatibility} coming from
Step~(2), chosen to agree on semisimple modules.
They yield a natural isomorphism
\[
 \tau:E\overline F_B\xrightarrow{\sim}\overline F_{B^0}E.
\]
Under these identifications, $E$ preserves the underlying
$D$-module homomorphisms between semisimple modules, and both
multiplications are induced by
\[
 \mu\otimes1:
 H\otimes_DH\otimes_D\overline{\topm}_B X
 \longrightarrow H\otimes_D\overline{\topm}_B X.
\]
Therefore
\begin{equation}\label{c2:eq:cleft-multiplication-compatibility}
 E(\overline\eta^B_X)\tau_X
 =\tau_{\overline F_BX}\,
   \overline F_{B^0}(\tau_X)\,
   \overline\eta^{B^0}_{EX}.
\end{equation}
For any morphism $a:\overline F_BX\to X$, put
$a^0=\tau_X^{-1}E(a)$.
Naturality of $\tau$ and
\eqref{c2:eq:cleft-multiplication-compatibility} give
\[
 \begin{aligned}
 \overline F_{B^0}(a^0)a^0-\overline\eta^{B^0}_{EX}a^0=\overline F_{B^0}(\tau_X^{-1})\,
 \tau_{\overline F_BX}^{-1}\,
 E\bigl(\overline F_B(a)a-\overline\eta^B_Xa\bigr).
 \end{aligned}
\]
Since $E$ is faithful and the two preceding factors are
invertible, $a^0$ satisfies the associativity relation
if and only if $a$ does.
Define
\[
 \widetilde E:\overline{\mathcal C}_B
       \longrightarrow\overline{\mathcal C}_{B^0},
 \qquad
 (X,a)\longmapsto(EX,a^0),\quad u\longmapsto E(u).
\]
For $u:X\to Y$ and $b:\overline F_BY\to Y$, put
$b^0=\tau_Y^{-1}E(b)$.
Naturality of $\tau$ also gives
\[
 a^0E(u)-\overline F_{B^0}(E(u))b^0
 =\tau_X^{-1}E\bigl(au-\overline F_B(u)b\bigr).
\]
Hence $E(u)$ is a morphism between the corresponding
objects if and only if $u$ is.
Since $E$ is fully faithful, so is $\widetilde E$;
addition, identities and composition are preserved.

To prove essential surjectivity, let $(Y,b)$ be an object of
$\overline{\mathcal C}_{B^0}$, and choose an isomorphism
$u:EX\to Y$.
Naturality of $\overline\eta^{B^0}$ and the relation for
$b$ show that
\[
 \overline F_{B^0}(u)b\,u^{-1}:
 \overline F_{B^0}EX\longrightarrow EX
\]
satisfies the associativity relation.
By full faithfulness of $E$, there is a unique
$a:\overline F_BX\to X$ such that
\[
 E(a)=\tau_X\,\overline F_{B^0}(u)b\,u^{-1}.
\]
The preceding calculation shows that $a$ satisfies the
associativity relation.
Moreover, $a^0u=\overline F_{B^0}(u)b$, so $u$ is an
isomorphism $\widetilde E(X,a)\to(Y,b)$.
Thus $\widetilde E$ is an additive equivalence, and
\begin{equation}\label{c2:eq:cleft-pair-equivalence}
 \overline{\mathcal C}_B\cong
 \overline{\mathcal C}_{B^0}.
\end{equation}

\smallskip
\noindent\emph{(4) Isomorphism classes and composition multiplicities.}

For an additive category $\mathcal A$, let
$\operatorname{Iso}(\mathcal A)$ denote the commutative
monoid of isomorphism classes under direct sum.
By Step~(3), $Q_\Lambda$ is full and dense
with nilpotent kernel, and hence induces an equivalence
\[
 \mathcal C_\Lambda/\ker Q_\Lambda
 \cong\overline{\mathcal C}_\Lambda
 \qquad(\Lambda=B,B^0).
\]
Applying \cite[Lemma~2.5.2]{Pirashvili2008} to
$\ker Q_\Lambda$, we obtain
\[
 \operatorname{Iso}(\mathcal C_\Lambda)
 \cong\operatorname{Iso}(\overline{\mathcal C}_\Lambda).
\]
Together with \eqref{c2:eq:cleft-module-equivalences} and
\eqref{c2:eq:cleft-pair-equivalence}, this yields a monoid
isomorphism
\begin{equation}\label{c2:eq:cleft-iso-monoid}
 \operatorname{Iso}(A\modu)
 \xrightarrow{\sim}\operatorname{Iso}(A^0\modu).
\end{equation}
It preserves and reflects indecomposability.
Consequently, $A$ is representation-finite if and only if
$A^0$ is representation-finite.

To compare composition multiplicities, put
\[
 S_i=Ae_i/J(A)e_i,\qquad
 S_i^0=A^0e_i^0/J(A^0)e_i^0
 \qquad(1\leq i\leq s).
\]
Let $M$ and $M^0$ represent corresponding classes under
\eqref{c2:eq:cleft-iso-monoid}.
The construction of $E$ and the lifting in Step~(3) give
\[
 \begin{aligned}
 {}_BM&\cong
 \bigoplus_{i=1}^s\bigoplus_{p=1}^{\ell_i}
       (A_i/\mathfrak r_i^p)^{n_{ip}},\\
 {}_{B^0}M^0&\cong
 \bigoplus_{i=1}^s\bigoplus_{p=1}^{\ell_i}
       \bigl(A_i^0/(\mathfrak r_i^0)^p\bigr)^{n_{ip}}
 \end{aligned}
\]
for the same nonnegative integers $n_{ip}$.
Indeed, the quotient functors and the lifting of the
$H$-action leave the underlying modules unchanged.

The exact functor $e_i(-):A\modu\to A_i\modu$ sends
$S_i$ to $D_i$ and the other simple modules to zero.
Applying it to a composition series of $M$, and arguing
similarly for $e_i^0(-)$, gives
\[
 [M:S_i]
 =\length_{A_i}(e_iM)
 =\sum_{p=1}^{\ell_i}p\,n_{ip}
 =\length_{A_i^0}(e_i^0M^0)
 =[M^0:S_i^0].
\]
Thus \eqref{c2:eq:cleft-iso-monoid} preserves composition
multiplicities under the correspondence
$S_i\leftrightarrow S_i^0$.
Since it also preserves and reflects indecomposability, property
$\X$ for $A$ implies property $\X$ for $A^0$.
\end{proof}

\section{Finite-dimensional algebras over fields}\label{sec:field-cases}

We treat finite and infinite base fields separately. Over a finite
field, representation embeddings provide the required counting
argument. Over an infinite field, we first consider algebras without
simple self-extensions and then pass to graded modules.

\subsection{Finite fields}\label{sec:finite-fields}

\begin{proposition}
\label{c2:prop:finite-field}
Let $k$ be a finite field and let $A$ be a finite-dimensional
$k$-algebra satisfying~$\X$. Then $A$ is representation-finite.
\end{proposition}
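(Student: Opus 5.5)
The plan is to compare two counts of indecomposable modules: a polynomial upper bound forced by $\X$, and an exponential lower bound that any representation-infinite algebra over a finite field must satisfy. Put $q=|k|$ and let $s$ be the number of simple $A$-modules. Under $\X$, an indecomposable $A$-module is determined up to isomorphism by $[M]$. If $\dim_kM\le n$, then every entry of $[M]$ is at most $n$. Hence the number of isomorphism classes of indecomposable $A$-modules of $k$-dimension at most $n$ is at most $(n+1)^s$, which is polynomial in $n$.

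Now suppose $A$ is representation-infinite. First, I pass to a finite extension $K\supseteq k$ that is a splitting field for $A$, so that every simple $A_K=A\otimes_kK$-module has endomorphism ring $K$. Since $k$ is finite, it is perfect and $K/k$ is separable. Two standard facts about separable scalar extension then apply.
\begin{enumerate}
\item $A_K$ is again representation-infinite. This is the Jensen--Lenzing type result: for separable $K/k$, $A$ is representation-finite if and only if $A_K$ is.
\item Every indecomposable $A_K$-module $N$ is a direct summand of $(N|_A)\otimes_kK$, and $N|_A$ has $k$-dimension $[K:k]\dim_KN$.
\end{enumerate}
By Krull--Schmidt, each indecomposable $A_K$-module of $K$-dimension at most $n$ is therefore a summand of $M\otimes_kK$ for some indecomposable $A$-module $M$ of $k$-dimension at most $[K:k]n$. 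Each such $M\otimes_kK$ has at most $[K:k]$ indecomposable summands. Combining this with the first paragraph, the number of indecomposable $A_K$-modules of $K$-dimension at most $n$ is bounded by $[K:k]\,([K:k]n+1)^s$, still polynomial in $n$.

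For the lower bound I would invoke Bongartz's representation embeddings \cite{Bongartz}. The aim is an exact functor $\Phi\colon K[T]\modu\to A_K\modu$, given by tensoring with a suitable $A_K$-$K[T]$-bimodule $B$ that is finitely generated free over $K[T]$. This functor should preserve indecomposability and reflect isomorphism classes, and satisfy $\dim_K\Phi(V)=c\dim_KV$ for a fixed $c$. The indecomposable $K[T]$-modules $K[T]/(p)$ with $p$ monic irreducible of degree at most $n$ number at least of order $|K|^n/n$. Their images are pairwise nonisomorphic indecomposable $A_K$-modules of dimension at most $cn$. This exponential count contradicts the polynomial bound.

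The main obstacle is this last step: producing such a representation embedding for an arbitrary representation-infinite algebra over a finite splitting field, rather than over an algebraically closed field. Any other one-parameter family that realizes infinitely many irreducible polynomials in linear dimension would serve equally well. I would first try to use Bongartz's construction directly. It works over any field once one has a minimal representation-infinite quotient (or a suitable Galois covering, or a tame concealed subcategory) defined over $K$, and splitting the simples over $K$ is exactly what lets his arguments apply. I would also check that its output functor reflects isomorphism on the modules $K[T]/(p)$, for instance via endomorphism rings of the images. If only an embedding of the modules $K[T]/(p^m)$ into a fixed dimension family were available, the same count would still go through.
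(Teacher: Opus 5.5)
Your proposal is correct and follows the paper's proof step for step. Both arguments use the $(n+1)^s$ bound from $\X$, then pass to a finite (automatically separable) splitting field $K$ using ``$N$ is a summand of $K\otimes_kN$'' together with Jensen--Lenzing. Both then get an exponential lower bound from a representation embedding $Z\otimes_{K[X]}-$ with $Z_{K[X]}$ free of finite rank. The paper obtains this embedding exactly as you suggest, by citing Theorem~6(i) of \cite{Bongartz} and the remark after it for $K$-split algebras, so the step you flagged as the main obstacle is settled by that citation.

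The remaining differences are cosmetic. The paper counts monic irreducibles of prime degree $p$, of which there are exactly $(q^p-q)/p$. It also bounds the number of summands of $K\otimes_kM$ by the cruder $hd$ rather than your sharper $[K:k]$. Your bound is correct, but it needs a one-line justification: for $\Delta=\End_A(M)/\rad\End_A(M)$, the algebra $\Delta\otimes_kK$ has left $\Delta$-dimension $[K:k]$.
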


\begin{proof}
We may assume that $A\ne0$. Let
$S_1,\ldots,S_s$ be representatives of the isomorphism classes
of simple left $A$-modules. For a finite-dimensional algebra $C$
over a field $F$ and a positive integer $d$, let $b_C(d)$ denote
the number of isomorphism classes of indecomposable left
$C$-modules of $F$-dimension at most $d$.

If $M$ is an $A$-module with $\dim_kM\le d$, then
\[
 0\le [M:S_i]\le d\qquad(1\le i\le s).
\]
Since $\X(A)$ holds, the composition multiplicities determine
the isomorphism class of an indecomposable $A$-module. Hence
\begin{equation}\label{c2:eq:finite-field-original-count}
 b_A(d)\le(d+1)^s.
\end{equation}

By \cite[Lemma~6.1.1]{DrozdKirichenko}, there exists a finite extension $K/k$ such that $K\otimes_kA$ is a split $K$-algebra. Put
\[
 h=[K:k],\qquad B=K\otimes_kA,\qquad |K|=q.
\]

We claim that
\begin{equation}\label{c2:count-bound}
 b_B(d)\le hd\,b_A(hd)\le hd(hd+1)^s
 \qquad(d\ge1).
\end{equation}

The extension $K/k$ is separable, since finite fields are perfect;
see \cite[Corollary~5.3.9]{DrozdKirichenko}.
By \cite[Theorem~6.1.6]{DrozdKirichenko}, the multiplication map
\[
 K\otimes_kK\longrightarrow K
\]
splits as a $K$-$K$-bimodule homomorphism. For a left $B$-module $N$, it follows that $N$ is a direct summand of
$K\otimes_k N$.

Suppose now that $N$ is an indecomposable $B$-module and $\dim_KN\le d$.
Choose a decomposition of $N$ into indecomposable $A$-modules
$N \cong M_1\oplus\cdots\oplus M_u.$
By the preceding splitting and the Krull--Schmidt theorem, $N$ is isomorphic, as a $B$-module, to
an indecomposable direct summand of $K\otimes_kM_j$ for some $j$.
Moreover,
\[
 \dim_kM_j
 \le\dim_k N
 =h\dim_KN\le hd.
\]
For each indecomposable $A$-module $M$ with $\dim_kM\le hd$,
the module $K\otimes_kM$ has $K$-dimension at most $hd$, and
therefore has at most $hd$ nonzero indecomposable direct summands.
There are at most $b_A(hd)$ possible isomorphism classes of such
$M$. Thus
$b_B(d)\le hd\,b_A(hd),$
and \eqref{c2:eq:finite-field-original-count} proves
\eqref{c2:count-bound}.

Suppose, towards a contradiction, that $A$ is
representation-infinite. Since $K/k$ is a finite separable extension, it is
MacLane-separable. Hence
\cite[Theorem~3.3]{JensenLenzing1982} gives
\[
A\text{ is representation-finite}
\quad\Longleftrightarrow\quad
B\text{ is representation-finite}.
\]
Consequently, $B$ is representation-infinite.
Since $B$ is a $K$-split algebra, \cite[Theorem~6(i) and the following remark]{Bongartz} gives a bimodule
${}_BZ_{K[X]}$ such that
$Z_{K[X]}\cong K[X]^r$
for some positive integer $r$, and the functor
\[
 Z\otimes_{K[X]}-:
 K[X]\text{-}\mathrm{mod}_0
 \longrightarrow B\text{-}\mathrm{mod}
\]
is a representation embedding, that is, $Z\otimes_{K[X]}-$ is an exact functor preserving indecomposable modules and reflecting isomorphisms. Here $K[X]\text{-}\mathrm{mod}_0$ is the category of
finite-dimensional $K[X]$-modules.

Let $p$ be a prime number, and let $\mathcal P_p$ be the set of
monic irreducible polynomials of degree $p$ in $K[X]$.
Choose an extension $K_p/K$ of degree $p$; thus $|K_p|=q^p$.
For every $\alpha\in K_p\setminus K$, the tower formula gives
$[K(\alpha):K]\mid [K_p:K]=p.$
Since $\alpha\notin K$, its minimal polynomial over $K$ has
degree $p$.
Conversely, every polynomial in $\mathcal P_p$ has exactly $p$
distinct roots in $K_p$: its splitting field over $K$ is the
finite field with $q^p$ elements, and finite fields are perfect.
Distinct monic irreducible polynomials have disjoint sets of
roots. Hence these root sets partition $K_p\setminus K$, and
therefore
\[
 p|\mathcal P_p|=q^p-q,\qquad
 |\mathcal P_p|=\frac{q^p-q}{p}.
\]

For each $f\in\mathcal P_p$, put
\[
 U_f=K[X]/(f),\qquad
 N_f=Z\otimes_{K[X]}U_f.
\]
Each $U_f$ is a simple $K[X]$-module of $K$-dimension $p$.
Moreover,
$\ann_{K[X]}(U_f)=(f),$
so the modules $U_f$, for $f\in\mathcal P_p$, are pairwise
nonisomorphic.
The representation embedding constructed above preserves
indecomposability and reflects isomorphisms. Consequently, the
modules $N_f$, for $f\in\mathcal P_p$, are pairwise nonisomorphic
indecomposable left $B$-modules.
Since $Z_{K[X]}\cong K[X]^r$, there is an isomorphism of
$K$-vector spaces
\[
 N_f
 =Z\otimes_{K[X]}U_f
 \cong U_f^{\oplus r}.
\]
Thus $\dim_KN_f=rp$ for every $f\in\mathcal P_p$.
By the definition of $b_B$ and \eqref{c2:count-bound}, we obtain
\[
 \frac{q^p-q}{p}
 =|\mathcal P_p|
 \le b_B(rp)
 \le hrp(hrp+1)^s,
\]
and hence
\[
 1\le
 \frac{hrp^2(hrp+1)^s+q}{q^p}
\]
for every prime number $p$.
However, $q\ge2$ and $h,r,s$ are fixed, so
\[
 \lim_{n\to\infty}
 \frac{hrn^2(hrn+1)^s+q}{q^n}=0.
\]
Since the prime numbers are unbounded, this contradicts the
preceding inequality. Hence $A$ is representation-finite.
\end{proof}

\subsection{Infinite fields}\label{sec:infinite-fields}
\label{c2:sec:brickification}\label{c2:sec:matrix}

In Lemma~\ref{c2:lem:rational-brick-family} and Proposition~\ref{c2:thm:loopless}, $k$ is an infinite field and $A$ is a
finite-dimensional $k$-algebra. We consider the exact sequence
	\[
	0\longrightarrow k[X]/(X)\xrightarrow{\cdot X}k[X]/(X^2)
	\longrightarrow k[X]/(X)\longrightarrow0 \qquad (\natural)
	\]
in $k[X]\modu$, where the second homomorphism is the natural projection.

\begin{lemma}\label{c2:lem:rational-brick-family}
Suppose that $A$ satisfies $\X$. Let $F$ be an
$A\otimes_k k[X]$-module which is free of finite rank over $k[X]$.
Assume that $F/XF$ is a brick as an $A$-module and that the
$A$-modules $F/(X-\lambda)F$, for all $\lambda\in k$, have the same
composition factors. Then the sequence
\begin{equation}\label{c2:eq:family-first-order-extension}
 0\longrightarrow F/XF\xrightarrow{j}F/X^2F
 \xrightarrow{\pi}F/XF\longrightarrow0
\end{equation}
is split exact as a sequence of $A$-modules. Here $j$ is induced
by multiplication by $X$, and $\pi$ is the natural projection.
\end{lemma}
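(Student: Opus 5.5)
The idea is to show that $F$ is, locally at $X=0$, the constant family $V\otimes_kk[X]$ with $V=F/XF$. The sequence \eqref{c2:eq:family-first-order-extension} is then $V\otimes_k(\natural)$, and this splits over $A$ because $(\natural)$ splits over $k$. To get there I will produce an $A\otimes_kk[X]$-homomorphism $\Phi:V\otimes_kk[X]\to F$ which is an isomorphism modulo $X$.

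\emph{Step 1: identify the generic fibres.} Each $F/(X-\lambda)F$ has the same composition factors as the brick $V$. I would like each of them to be isomorphic to $V$, but $\X$ applies only to indecomposable modules. So instead I argue directly with $\Hom$. Consider the $k[X]$-modules $\Hom_A(V,F)$ and $\Ext^1_A(V,F)$, where $k[X]$ acts through $F$. Both are finitely generated, since $V$ is finitely presented over $A$ and $F$ is finitely generated over $A\otimes_kk[X]$. Apply $\Hom_A(V,-)$ to $0\to F\xrightarrow{X-\lambda}F\to F/(X-\lambda)F\to0$. By Lemma~\ref{c2:lem:finite-exceptional-elements} with $a_\lambda=X-\lambda$, multiplication by $X-\lambda$ on $\Ext^1_A(V,F)$ is injective for all but finitely many $\lambda$. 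For such $\lambda$ we therefore get
\[
\Hom_A(V,F)/(X-\lambda)\Hom_A(V,F)\cong\Hom_A\bigl(V,F/(X-\lambda)F\bigr).
\]
Since $k$ is infinite, some such $\lambda$ remains. I then need $\Hom_A(V,F/(X-\lambda)F)\neq0$ for that $\lambda$, which would give $\Hom_A(V,F)\ne0$.

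\emph{Main obstacle.} This nonvanishing is the step I expect to be hardest. The fibre $F/(X-\lambda)F$ has the right composition factors but need not be indecomposable, so $\X$ cannot be applied to it directly. My first attempt is a symmetric semicontinuity argument: the same Ext argument with $\Hom_A(F,V\otimes_kk[X])$ controls $\Hom$ from the fibres to $V$. Alternatively, I would use that having $\dim_k\End_A=1$-type minimality (being a brick) is an open condition, so generic fibres are bricks. A brick is indecomposable, so $\X$ then gives $F/(X-\lambda)F\cong V$ for generic $\lambda$, and in particular $\Hom_A(V,F/(X-\lambda)F)\ne0$.

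\emph{Step 2: normalize at $X$.} Take $0\neq w\in\Hom_A(V,F)$. Apply Lemma~\ref{c2:lem:normalization-over-domain} with $R=k[X]$ and $a=X$; its hypotheses hold because $F$ is free over $k[X]$. This gives $v:V\to F$ with $(V)v\nsubseteq XF$. The composite $V\xrightarrow{v}F\to F/XF=V$ is therefore a nonzero endomorphism of the brick $V$, hence an isomorphism. Let $\Phi:V\otimes_kk[X]\to F$ be the $k[X]$-linear extension of $v$, which is an $A\otimes_kk[X]$-homomorphism. Then $\Phi$ is an isomorphism modulo $X$.

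\emph{Step 3: conclude.} Both $V\otimes_kk[X]$ and $F$ are free over $k[X]$ of rank $\dim_kV$, the latter because its reduction modulo $X$ is $V$. By Nakayama's lemma over the local ring $k[X]_{(X)}$, the localization $\Phi_{(X)}$ is surjective. A surjection between free modules of the same finite rank is an isomorphism. This is an isomorphism of $A\otimes_kk[X]_{(X)}$-modules, as in Lemma~\ref{aux}.

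Reducing modulo $X^2$ gives an $A$-isomorphism $V\otimes_kk[X]/(X^2)\cong F/X^2F$. It is compatible with $j$ and $\pi$, because $\Phi$ commutes with multiplication by $X$. Hence \eqref{c2:eq:family-first-order-extension} is isomorphic to $V\otimes_k(\natural)$. Since $(\natural)$ splits $k$-linearly, tensoring a $k$-splitting with $1_V$ gives an $A$-splitting. So \eqref{c2:eq:family-first-order-extension} is split exact.
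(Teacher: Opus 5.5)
There is a genuine gap, and it is exactly where you flagged it. Nothing in your sketch shows $\Hom_A(V,F/(X-\lambda)F)\neq0$ for some $\lambda$ outside the finite exceptional set.

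Your two proposed fixes both fail.
\begin{itemize}
\item The symmetric argument with $\Hom_{A\otimes_kk[X]}(F,V\otimes_kk[X])$ only identifies $\Hom_A(F/(X-\lambda)F,V)$ with a specialization for generic $\lambda$. It says nothing about nonvanishing, so it reproduces the same problem.
\item The claim that brickness is open is valid only when $\End_A(V)=k$. In that case upper semicontinuity of $\lambda\mapsto\dim_k\End_A(F/(X-\lambda)F)$ forces a one-dimensional endomorphism ring generically. Here $k$ is an arbitrary infinite field, and $\End_A(V)$ may be a division algebra of dimension $d>1$. Semicontinuity then only gives $\dim_k\End\le d$, which forces neither a brick nor indecomposability.
\end{itemize}
For instance, over $k=\mathbb R$ take the Kronecker family $\mathbb R[X]^2\rightrightarrows\mathbb R[X]^2$ with arrows $1$ and $\left(\begin{smallmatrix}0&X-1\\1&0\end{smallmatrix}\right)$. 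The fibre at $\lambda$ has endomorphism ring $\mathbb R[t]/(t^2-\lambda+1)$. This is $\mathbb C$ at $\lambda=0$, but $\mathbb R\times\mathbb R$ for every $\lambda>1$. Moreover $\Hom_A(F/XF,F/(X-\lambda)F)=0$ for all $\lambda\neq0$. So every hypothesis except $\X$ holds, yet the nonvanishing you need fails. The step must therefore use $\X$, and your sketch never does.

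The paper closes the gap by a different mechanism.
\begin{enumerate}
\item Property $\X$ and Krull--Schmidt leave only finitely many isomorphism classes of (possibly decomposable) modules with the given composition factors. By pigeonhole there is an infinite $\Lambda\subseteq k$ and a module $N$ with $F/(X-\lambda)F\cong N$ for all $\lambda\in\Lambda$.
\item Your torsion argument, applied to $\Ext^1_A(N,F)\oplus F$, lifts one such isomorphism to $f\colon N\to F$. Lemma~\ref{aux} then gives $N\otimes_kK\cong F\otimes_{k[X]}K$, where $K=k(X)$.
\item One shows $F\otimes_{k[X]}K$ is a brick over $A\otimes_kK$. Clear denominators of an endomorphism $g$ to get $w\colon F\to F$. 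By Lemma~\ref{c2:lem:normalization-over-domain}, write $w=X^mh$ with $(F)h\nsubseteq XF$. Then $h$ induces an automorphism of the brick $F/XF$, so $h\otimes1_K$, and hence $g$, is invertible by Lemma~\ref{aux}.
\item Thus $N\otimes_kK$ is indecomposable, so $N$ is indecomposable, and $\X$ gives $N\cong F/XF$.
\end{enumerate}
Only at this point is a nonzero map $F/XF\to F$ available. From there your Steps~2 and~3 are correct and match the paper's conclusion. The localization detour is fine, though $s=(vq)^{-1}v\rho$ already splits $\pi$ directly.
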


\begin{proof}
The sequence \eqref{c2:eq:family-first-order-extension} is isomorphic to $F\otimes_{k[X]} (\natural)$. Since $F$ is free over $k[X]$, the sequence
\eqref{c2:eq:family-first-order-extension} is exact.

Since $k$ is infinite and $A$ satisfies $\X$, there is an infinite subset $\Lambda$ of $k$ and a finite-dimensional $A$-module $N$ such
that
\begin{equation*}
 u_\lambda:N\to F/(X-\lambda)F
\end{equation*}
is an $A$-module isomorphism for all $\lambda\in \Lambda$.

Let $E=\Ext_A^1(N,F)$. A projective resolution of $N$ with finitely generated terms shows that $E$ is finitely generated
over $k[X]$.
Consider the finitely generated $k[X]$-module $E\oplus F$ and the family $\{X-\lambda\}_{\lambda\in \Lambda}\subseteq k[X]$. Since $\Lambda$ is an infinite set, there is an element $\mu\in \Lambda$ such that $m_{X-\mu}:E\oplus F\to E\oplus F$ is injective by Lemma~\ref{c2:lem:finite-exceptional-elements}, where $m_{X-\mu}$ is multiplication by $X-\mu$.
Hence
\[
 0\longrightarrow F\xrightarrow{m_{X-\mu}}F
 \xrightarrow{q_{X-\mu}}F/(X-\mu)F\longrightarrow0
\]
is a short exact sequence of left $A$-modules.

Applying $\Hom_A(N,-)$ therefore gives the exact sequence
\[
 \Hom_A(N,F)\xrightarrow{\Hom_A(N,q_{X-\mu})}
 \Hom_A\bigl(N,F/(X-\mu)F\bigr)
 \longrightarrow E\xrightarrow{\Ext_A^1(N,m_{X-\mu})}E.
\]
Note that the map $\Ext_A^1(N,m_{X-\mu})$ is exactly
multiplication by $X-\mu$ on $E$. This implies that $\Ext_A^1(N,m_{X-\mu})$ is injective and hence $\Hom_A(N,q_{X-\mu})$ is surjective.
There is an $A$-module homomorphism
\begin{equation*}
 f:N\longrightarrow F,
\end{equation*}
such that $fq_{X-\mu}=u_\mu$.
In particular, $f\neq0$.

Define
\[
 \widetilde f:N\otimes_k k[X]\longrightarrow F,
 \qquad (x\otimes p(X))\widetilde f=p(X)((x)f).
\]
Clearly, $\widetilde f$ is
an $A\otimes_k k[X]$-module homomorphism. The natural $A$-module isomorphism
\[
 \frac{N\otimes_k k[X]}{(X-\mu)(N\otimes_k k[X])}
 \cong N
\]
is induced by $x\otimes p(X)\mapsto p(\mu)x$; its inverse
sends $x$ to $x\otimes1$ in the quotient module. In that quotient,
$x\otimes p(X)=p(\mu)(x\otimes1)$.
Under this isomorphism, the map induced by $\widetilde f$ on the
quotients by $X-\mu$ is $fq_{X-\mu}=u_\mu$.
If $r$ is the $k[X]$-rank of $F$, a free basis gives
$F/(X-\mu)F\cong k^r$ as $k$-vector spaces. Hence
$\dim_kN=r$, and both $N\otimes_k k[X]$ and $F$
are free $k[X]$-modules of rank $r$. Let $K:=k(X)$.
Applying Lemma~\ref{aux} to $\widetilde f$, we have
\begin{equation}\label{c2:eq:generic-fixed-module}
 N\otimes_kK\cong N\otimes_k k[X]\otimes_{k[X]}K\cong F\otimes_{k[X]}K
\end{equation}
as $A\otimes_kK$-modules.

We next prove that $F\otimes_{k[X]}K$ is a brick as an
$A\otimes_kK$-module. Let
\[
\iota:F\longrightarrow F\otimes_{k[X]}K,
\qquad
(x)\iota=x\otimes1,
\]
be the natural $A\otimes_k k[X]$-module homomorphism.
Since $F$ is free over $k[X]$, the map $\iota$
is injective.

Take
\[
0\ne g\in
\operatorname{End}_{A\otimes_kK}
\bigl(F\otimes_{k[X]}K\bigr).
\]
Choose a finite set $x_1,\ldots,x_r$ of $k[X]$-module
generators of $F$.
Since $K$ is the fraction field of $k[X]$, for each $i$
there exists a nonzero polynomial $q_i(X)\in k[X]$ such that
$q_i(X)\bigl((x_i)\iota g\bigr)\in(F)\iota.$
Put $
p(X)=\prod_{i=1}^r q_i(X).
$
Then $p(X)\ne0$. Since $\iota g$ is a $k[X]$-module homomorphism, we obtain
$p(X)\bigl((F)\iota g\bigr)\subseteq(F)\iota.$

For each $x\in F$, the above inclusion and the injectivity
of $\iota$ give a unique element $(x)w\in F$ satisfying
\[
\bigl((x)w\bigr)\iota
=
p(X)\bigl((x)\iota g\bigr).
\]
This defines an $A\otimes_k k[X]$-module homomorphism $w:F\to F$.
By construction,
$w\iota=p(X)(\iota g).$
Since $(F)\iota$ spans $F\otimes_{k[X]}K$ over $K$ and
$g\ne0$, we have $\iota g\ne0$.
As $p(X)$ is invertible in $K$, it follows that $w\ne0$.

Apply Lemma~\ref{c2:lem:normalization-over-domain} with $R=k[X]$, $a=X$, and $V=F$.
There is an integer $m\ge0$
and an $A\otimes_k k[X]$-module homomorphism
\[
h:F\longrightarrow F
\]
such that
\[
w=X^m h,
\qquad
(F)h\nsubseteq XF.
\]
Thus $h$ induces a nonzero $A$-module endomorphism
\[
F/XF\longrightarrow F/XF.
\]
Since $F/XF$ is a brick as an $A$-module, this induced endomorphism is an
$A$-module isomorphism. By Lemma~\ref{aux}, we have an $A\otimes_kK$-module isomorphism
\[
h\otimes_{k[X]}1_K:
F\otimes_{k[X]}K\longrightarrow F\otimes_{k[X]}K.
\]

Note that
$
g=\frac{X^m}{p(X)}(h\otimes_{k[X]}1_K),
$
and the scalar $X^m/p(X)$ is invertible in $K$. Thus $g$ is an
$A\otimes_kK$-module isomorphism.
Finally, $F/XF\ne0$ implies that the free $k[X]$-module $F$
has positive rank, and hence $F\otimes_{k[X]}K\ne0$.
Therefore $F\otimes_{k[X]}K$ is a brick.

By~\eqref{c2:eq:generic-fixed-module}, $N\otimes_kK$ is an
indecomposable $A\otimes_k K$-module. Hence $N$ is an indecomposable $A$-module. Clearly, $F/XF$ is also an indecomposable $A$-module.
Since $N$ and $F/XF$ have the same composition factors, $N\cong F/XF$ as $A$-modules by property $\X$. We denote this isomorphism $F/XF\to N$ by $\alpha$.

Apply Lemma~\ref{c2:lem:normalization-over-domain} to the nonzero map $\omega = \alpha f : F/XF \to F$.
We obtain an $A$-module homomorphism $v : F/XF \to F$ whose image is not contained in $XF$.
Let $q:F\to F/XF$ and $\rho:F\to F/X^2F$ be the natural
$A$-module projections. Then $\rho\pi=q$, and $vq$ is a nonzero
$A$-module endomorphism of $F/XF$. Since $F/XF$ is a brick as an $A$-module, $vq$ is an automorphism of $F/XF$.

Set
\[
 s=(vq)^{-1}v\rho:F/XF\longrightarrow F/X^2F.
\]
This is an $A$-module homomorphism, and
\[
 s\pi=(vq)^{-1}v\rho\pi=(vq)^{-1}vq=1_{F/XF}.
\]
This proves that
\eqref{c2:eq:family-first-order-extension} splits.
\end{proof}

\begin{proposition}\label{c2:thm:loopless}
Suppose that $A$ satisfies $\X$ and that
$\Ext_A^1(S,S)=0$ for every simple $A$-module $S$.
Then every indecomposable $A$-module is a brick, and $A$ has finite
representation type.
\end{proposition}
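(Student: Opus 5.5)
The plan is to argue by contradiction. Suppose some indecomposable $A$-module is not a brick, and let $M$ be one of minimal composition length. By Lemma~\ref{c2:lem:shortest-nonbrick} there is a nonsplit sequence $\xi:0\to U\to M\to U\to0$ with $U$ a brick, and the pullback of $\xi$ along any proper submodule of $U$ splits. The simple case is excluded at once: if $U$ were simple, $\xi$ would be a nonzero element of $\Ext^1_A(U,U)=0$. So $U$ is not simple, and hence $\soc U\subseteq\rad U$. I will build a polynomial family $F$ whose first-order thickening recovers $M$, and then contradict Lemma~\ref{c2:lem:rational-brick-family}.

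\emph{Normalizing the cocycle.} Choose a $k$-linear splitting $M=U\oplus U$. The action then takes the form $a\cdot(x,y)=(ax+(y)\delta(a),\,ay)$ for a $k$-linear derivation $\delta:A\to\Hom_k(U,U)$, written in the paper's right-action convention. Two normalizations are available.
\begin{itemize}
\item Because the pullback of $\xi$ to $\rad U$ splits, I can modify $\delta$ by an inner derivation, that is, change the splitting, so that $\delta(a)$ vanishes on $\rad U$ for every $a$.
\item Dually, the pushout of $\xi$ along $U\to U/\soc U$ splits. This is the opposite-algebra version of the same minimality argument, since the proof of Lemma~\ref{c2:lem:shortest-nonbrick} is self-dual. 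A second adjustment then makes every $\delta(a)$ take values in $\soc U$.
\end{itemize}
The main obstacle is to perform both normalizations simultaneously. Concretely, I would show that the class of $\xi$ lies in the image of $\Hom$-level data $\topm U\to\soc U$, that is, that the class comes from an extension $0\to\soc U\to ?\to\topm U\to0$ transported along $U\twoheadrightarrow\topm U$ and $\soc U\hookrightarrow U$. I would try to prove this with the long exact sequences for $0\to\rad U\to U\to\topm U\to0$ and $0\to\soc U\to U\to U/\soc U\to0$, using both splitting properties.

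\emph{The family.} Once both normalizations hold, every composite $\delta(b)\delta(a)$ vanishes. Indeed, the first map lands in $\soc U\subseteq\rad U$, and the second map kills $\rad U$. Now put $F=U\otimes_kk[X]$ with action $a\cdot u=au+X\,(u)\delta(a)$. Associativity holds: the Leibniz rule handles the $X$-linear terms, and the $X^2$-term is $\delta(b)\delta(a)=0$. Thus $F$ is an $A\otimes_kk[X]$-module, free of finite rank over $k[X]$. Moreover $F/XF\cong U$ is a brick, and $F/X^2F\cong M$, so the sequence \eqref{c2:eq:family-first-order-extension} is exactly $\xi$, which is nonsplit. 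Finally, the modules $F/(X-\lambda)F$ all have the same composition factors. This is because $[N:S_i]=\dim_k e_iN/\dim_kD_i$, and each $e_iF$ is a free $k[X]$-module of constant rank. Lemma~\ref{c2:lem:rational-brick-family} therefore forces $\xi$ to split, a contradiction. Hence every indecomposable $A$-module is a brick.

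\emph{Finiteness.} For finite type I would again argue by contradiction, now with all indecomposables bricks. If $A$ is representation-infinite over the infinite field $k$, I would produce a family $F$ as above, free over $k[X]$, with pairwise nonisomorphic indecomposable fibres of constant composition factors. The natural source is a Brauer--Thrall type argument, or a representation embedding $k[X]\text{-}\mathrm{mod}_0\to A\modu$ in the split case as in Proposition~\ref{c2:prop:finite-field}, after a field extension. Property $\X$ identifies all these fibres, contradicting the nonisomorphism. I regard producing such a family over an arbitrary infinite field as the second delicate point. I would first try the first-order splitting of Lemma~\ref{c2:lem:rational-brick-family} together with generic freeness.
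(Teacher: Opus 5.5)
Your reduction to Lemma~\ref{c2:lem:rational-brick-family} through a first-order thickening is the right idea. However, the step you flag as the main obstacle is a genuine gap, and the tools you propose do not close it.

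The two splitting properties only say that $[\xi]$ lies in the images of both $\Ext_A^1(\topm U,U)$ and $\Ext_A^1(U,\soc U)$ in $\Ext_A^1(U,U)$. A cocycle that vanishes on $\rad U$ and takes values in $\soc U$ exists exactly when $[\xi]$ lies in the image of $\Ext_A^1(\topm U,\soc U)$, which is a stronger condition. Concretely, let $s:\rad U\to M$ and $t:M\to U/\soc U$ be $A$-linear splittings of the pullback and of the pushout. A common $k$-splitting exists if and only if $s$ and $t$ can be chosen with $\rad U\xrightarrow{s}M\xrightarrow{t}U/\soc U$ equal to zero. This composite is determined only modulo maps factoring through $U\to U/\soc U$ or through $\rad U\hookrightarrow U$. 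Your long exact sequences give no control over the resulting class.

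Note also that for nonsimple $U$ your argument never uses $\Ext_A^1(S,S)=0$. That hypothesis is exactly what should replace the socle condition, provided you also replace $\rad U$ by a maximal submodule. The repair goes as follows.
Take $p:U\to S$ with $S$ simple and put $U'=\ker p$. Using the pullback splitting along $U'$, normalize $\delta$ so that it vanishes on $U'$. Then each $\delta(a)$ factors as $U\xrightarrow{p}S\xrightarrow{\bar\delta(a)}U$, where $\bar\delta$ represents some $[\eta]\in\Ext_A^1(S,U)$ with $p^*[\eta]=[\xi]$. The derivation $a\mapsto\bigl(S\xrightarrow{\bar\delta(a)}U\xrightarrow{p}S\bigr)$ represents $p_*[\eta]\in\Ext_A^1(S,S)=0$. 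Hence it is inner, induced by some $\psi\in\End_k(S)$. Choose a $k$-linear $\Psi:S\to U$ with $S\xrightarrow{\Psi}U\xrightarrow{p}S$ equal to $\psi$, and change the splitting by $U\xrightarrow{p}S\xrightarrow{\Psi}U$. This keeps $\delta(a)|_{U'}=0$ and makes every $\delta(a)$ take values in $U'$. So every composite of two values of $\delta$ vanishes, and your family and the rest of your verification go through unchanged. This is the paper's proof in cocycle form: its $r:E\to S$ with $ir=p$ encodes $p_*[\eta]=0$, and after this normalization its $F=\ker(q_T-Xr_T)$ is your $F$.

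The finiteness half is also not proved: you name possible sources of a one-parameter family but construct none. A $k[X]$-free family with pairwise nonisomorphic indecomposable fibres of constant dimension, over an arbitrary infinite field, is essentially a second Brauer--Thrall theorem. That is not available in this generality. The field-extension route needs a separable splitting field to transfer representation type and to recover $A$-modules as direct summands, as in Proposition~\ref{c2:prop:finite-field}. When $k$ is imperfect, such a field need not exist. None of this is needed. Once every indecomposable $A$-module is a brick, finite representation type follows from a general theorem, which the paper simply quotes as \cite[Corollary~4.6]{Skowronski}.
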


\begin{proof}
	Suppose that some indecomposable $A$-module is not a brick. By
	Lemma~\ref{c2:lem:shortest-nonbrick}, there exist a brick $B$
	and a nonzero class
	$[\xi]\in\Ext_A^1(B,B)$
	whose pullback along the inclusion of any proper submodule of
	$B$ is zero. The module $B$ is not simple, since otherwise
	the hypothesis would give $\Ext_A^1(B,B)=0$.
	
	Choose an epimorphism $p:B\longrightarrow S$ with $S$ simple,
	and let $u:U\longrightarrow B$ be its kernel. Thus
	\[
	0\longrightarrow U\xrightarrow{u}B\xrightarrow{p}S
	\longrightarrow0
	\]
	is exact and $0\ne U\subsetneq B$. The associated long exact
	sequence contains
	\[
	\Ext_A^1(S,B)\xrightarrow{p^*}\Ext_A^1(B,B)
	\xrightarrow{u^*}\Ext_A^1(U,B).
	\]
	Since $([\xi])u^*=0$, there exists
	$[\eta]\in\Ext_A^1(S,B)$ such that
	$([\eta])p^*=[\xi].$
	Choose a short exact sequence representing $[\eta]$:
	\begin{equation*}
		0\longrightarrow B\xrightarrow{i}E\xrightarrow{q}S
		\longrightarrow0.
	\end{equation*}
	
	Applying $\Hom_A(-,S)$ to this sequence gives
	\[
	\Hom_A(E,S)\xrightarrow{\Hom_A(i,S)}\Hom_A(B,S)
	\longrightarrow\Ext_A^1(S,S)=0.
	\]
	Consequently, there exists an $A$-module homomorphism
	$r:E\longrightarrow S$ such that
	$ir=p.$
	
	Let $g:=(r,q):E\longrightarrow S\oplus S$, $j_1:=(1_S,0):S\longrightarrow S\oplus S$ and
	$\pi_2:=\left(\begin{smallmatrix}0\\1_S\end{smallmatrix}\right):S\oplus S\longrightarrow S$ be $A$-module homomorphisms.
	There is a commutative diagram with exact rows
	\[
	\begin{tikzcd}[column sep=large, row sep=large]
    0 \arrow{r}
      & B \arrow{r}{i} \arrow{d}{p}
      & E \arrow{r}{q} \arrow{d}{g}
      & S \arrow{r} \arrow{d}{1}
      & 0 \\
    0 \arrow{r}
      & S \arrow{r}{j_1}
      & S\oplus S \arrow{r}{\pi_2}
      & S \arrow{r}
      & 0 .
    \end{tikzcd}
	\]
	Since $p$ is an epimorphism, the snake lemma shows that $g$
	is an epimorphism.
	
	Put $T=k[X]$. For an $A$-module $Y$ and an $A$-module homomorphism $f$, write
	\[
	Y_T=Y\otimes_kT,
	\qquad
	f_T=f\otimes_k1_T.
	\]
	Define
	\[
	\varphi=q_T-X(r_T):E_T\longrightarrow S_T,
	\]
	and let $v:F\longrightarrow E_T$ be its kernel.
	The morphism $\varphi$ is the composite of the epimorphism
	$g_T:E_T\longrightarrow S_T\oplus S_T$ with the split
	epimorphism
	\[
	\left(\begin{smallmatrix}-X\\1_{S_T}\end{smallmatrix}\right):
	S_T\oplus S_T\longrightarrow S_T.
	\]
	Hence
	\begin{equation}
		\label{c2:eq:kernel-family-sequence}
		0\longrightarrow F\xrightarrow{v}E_T
		\xrightarrow{\varphi}S_T\longrightarrow0
	\end{equation}
	is an exact sequence of $A\otimes_kT$-modules.
	
	As a sequence of $T$-modules,
	\eqref{c2:eq:kernel-family-sequence} splits, because $S_T$
	is free over $T$. Thus $F$ is a finitely generated
	projective $T$-module. Since $T$ is a principal ideal
	domain, $F$ is a free $T$-module of finite rank. Moreover, every base
	change of \eqref{c2:eq:kernel-family-sequence} remains exact.
	
	For $\lambda\in k$, put
	$F_\lambda=F/(X-\lambda)F.$
	For every $A$-module $Y$, denote by
	\[
	\epsilon_{Y,\lambda}:
	Y_T/(X-\lambda)Y_T\xrightarrow{\sim}Y
	\]
	the isomorphism induced by evaluation at $X=\lambda$.
	Let
	\[
	v_\lambda:F_\lambda\longrightarrow E
	\]
	be the composite of the morphism induced by $v$ with
	$\epsilon_{E,\lambda}$. Specializing
	\eqref{c2:eq:kernel-family-sequence} gives an exact sequence in $A\modu$
	\[
	0\longrightarrow F_\lambda\xrightarrow{v_\lambda}E
	\xrightarrow{q-\lambda r}S\longrightarrow0.
	\]
	It follows that $F_\lambda$ and $B$ have the same composition factors as $A$-modules for every $\lambda\in k$.
	
	Put
	\[
	V=F/XF,\qquad H=F/X^2F.
	\]
	At $\lambda=0$, the specialized sequence is
	\[
	0\longrightarrow V\xrightarrow{v_0}E
	\xrightarrow{q}S\longrightarrow0.
	\]
	Both $i:B\longrightarrow E$ and
	$v_0:V\longrightarrow E$ are kernels of $q$.
	The universal property of the kernel therefore gives
	a unique isomorphism
	\[
	\alpha:V\xrightarrow{\sim}B,
	\qquad \alpha i=v_0.
	\]
    In particular, $V$ is a brick.
	
	Since $F$ is free over $T$, tensoring the exact sequence
	\[
	0\longrightarrow T/(X)\xrightarrow{\cdot X}T/(X^2)
	\longrightarrow T/(X)\longrightarrow0
	\]
	with $F$ gives the natural short exact sequence of
	$A$-modules
	\begin{equation*}
		\xi_F:\quad
		0\longrightarrow V\xrightarrow{\iota}H
		\xrightarrow{\pi}V\longrightarrow0.
	\end{equation*}
	Lemma~\ref{c2:lem:rational-brick-family} gives
	\begin{equation}
		\label{c2:eq:family-extension-zero}
		[\xi_F]=0
		\qquad\text{in }\Ext_A^1(V,V).
	\end{equation}
	
	We compare $[\xi_F]$ with $[\xi]$ using the naturality of Yoneda extensions. Put $D=T/(X^2)$, and denote the image of $X$
	in $D$ by $\bar X$. For every $A$-module $Y$, write
	$Y_D=Y\otimes_kD$. There is a natural split exact sequence
	of $A$-modules
	\[
	0\longrightarrow Y\xrightarrow{a_Y}Y_D
	\xrightarrow{b_Y}Y\longrightarrow0,
	\]
	where $a_Y$ is induced by the homomorphism of $k$-vector spaces
	$k\longrightarrow D$ sending $1$ to $\bar X$, and $b_Y$
	is induced by the augmentation $D\longrightarrow k$.
	There are
	\[
	c_Y:Y\longrightarrow Y_D,\qquad d_Y:Y_D\longrightarrow Y
	\]
	such that
	\[
	c_Yb_Y=1_Y,\qquad a_Y d_Y=1_Y,\qquad d_Ya_Y+b_Yc_Y=1_{Y_D},\qquad c_Y d_Y=0.
	\]
	All these morphisms are natural in $Y$.
	
	Let $m_Y=b_Y a_Y:Y_D\longrightarrow Y_D$. Then
	$m_Y d_Y=b_Y$.
	Reduction of $v$ modulo $X^2$ induces a monomorphism
	\[
	v_2:H\longrightarrow E_D.
	\]
	Compatibility with reduction and multiplication by $X$
	gives
	\begin{equation}
		\label{c2:eq:base-change-compatibility}
    v_2 b_E=\pi v_0,
		\qquad\iota v_2=v_0 a_E.
	\end{equation}
	Moreover, reduction of $v \varphi=0$ gives
	$v_2(q_D-r_D m_S)=0,$
	where $q_D=q\otimes_k1_D$ and $r_D=r\otimes_k1_D$.
	
	Define
	\[
	h=v_2 d_E:H\longrightarrow E.
	\]
	By \eqref{c2:eq:base-change-compatibility},
	\[
	\iota h
	=v_0 a_E d_E
	=v_0
	=\alpha i.
	\]
	Naturality of the morphisms $b_Y$ and $d_Y$ gives
	\[
    \begin{aligned}
		h q & = v_2 q_D d_S = v_2 r_D m_S d_S = v_2 r_D b_S\\
		    &=v_2 b_E r =\pi v_0 r =\pi \alpha i r =\pi \alpha p.
	\end{aligned}
    \]
	Consequently, there is a commutative diagram of short
	exact sequences
	\[
	\begin{tikzcd}[column sep=large, row sep=large]
    0 \arrow{r}
      & V \arrow{r}{\iota} \arrow{d}{\alpha}
      & H \arrow{r}{\pi} \arrow{d}{h}
      & V \arrow{r} \arrow{d}{\alpha p}
      & 0 \\
    0 \arrow{r}
      & B \arrow{r}{i}
      & E \arrow{r}{q}
      & S \arrow{r}
      & 0 .
    \end{tikzcd}
	\]
	
	Naturality of Yoneda extensions now yields
	\begin{equation}
		\label{c2:eq:yoneda-comparison}
		([\xi_F])\alpha_*
		=([\eta])(\alpha p)^*
		=(([\eta]) p^*)\alpha^*
		=([\xi])\alpha^*
		\qquad\text{in }\Ext_A^1(V,B).
	\end{equation}
	Here $\alpha_*$ denotes pushout along $\alpha$ in the second
	variable, and $\alpha^*$ denotes pullback along $\alpha$ in
	the first variable.
	
	By \eqref{c2:eq:family-extension-zero}, the left-hand side
	of \eqref{c2:eq:yoneda-comparison} is zero. Since $\alpha$
	is an isomorphism, $[\xi]=0$, contradicting its choice.
	
	Every indecomposable finitely generated left $A$-module
	is consequently a brick. By
	\cite[Corollary~4.6]{Skowronski}, $A$ has finite representation type.
\end{proof}

We next pass to graded modules, using the notation of
Subsection~\ref{sec:prelim-graded}. The first proposition gives a
finiteness criterion for modules with support in a fixed finite set.
The following proposition supplies the required uniform support bound.
In these propositions,
$\Lambda$ is a finite-dimensional basic $\mathbb Z^r$-graded algebra,
and $f_1,\ldots,f_s$ are primitive orthogonal idempotents of degree zero.

\begin{proposition}
\label{c2:prop:finite-support-criterion}
Let $k$ be an infinite field, and let $\Lambda$ be a
finite-dimensional basic $\mathbb Z^r$-graded $k$-algebra
satisfying $\X$.
Let $f_1,\ldots,f_s\in\Lambda_0$ be a complete set of
pairwise orthogonal primitive idempotents, and put $J=J(\Lambda)$.
Assume that
\begin{equation}\label{c2:eq:zero-degree-self-extension-condition}
 f_i(J/J^2)_0f_i=0
 \qquad(1\le i\le s).
\end{equation}
Then, for every finite nonempty subset $I\subseteq\mathbb Z^r$,
the category $\Lambda\modu_{\mathrm{gr}}^I$
has only finitely many isomorphism classes of indecomposable
objects. Moreover, every indecomposable graded $\Lambda$-module
$M$ has a division endomorphism ring
$\operatorname{End}_{\mathrm{gr}\Lambda}(M)$.

These two conclusions hold, in particular, if every
$f_i\Lambda_0f_i$ is a division ring.

If, in addition, there exists a finite nonempty subset
$I_0\subseteq\mathbb Z^r$ such that every indecomposable
graded $\Lambda$-module has a shift supported in $I_0$,
then $\Lambda$ is representation-finite.
\end{proposition}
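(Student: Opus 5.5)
The plan is to realize $\Lambda\modu_{\mathrm{gr}}^I$, for a finite nonempty $I$, as the module category of a finite-dimensional $k$-algebra and then apply Proposition~\ref{c2:thm:loopless}. Put $P=\bigoplus_{v\in I}\bigoplus_{i=1}^s\bigl(\Lambda f_i[-v]\bigr)/N_v^i$, where $N_v^i$ is the largest graded submodule with support outside $I$. That is, take the graded projectives $\Lambda f_i$ placed in degree $v$ and truncate them to support in $I$. Then $P$ is a projective generator of $\Lambda\modu_{\mathrm{gr}}^I$: for $M$ supported in $I$, graded maps from $\Lambda f_i[-v]$ factor through the truncation, and $\Hom$ from it is $f_iM_v$, which is exact. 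Hence $\Gamma_I:=\End_{\mathrm{gr}\Lambda}(P)^{\mathrm{op}}$ is a finite-dimensional $k$-algebra and $\Lambda\modu_{\mathrm{gr}}^I\simeq\Gamma_I\modu$. This equivalence is exact, so it matches simple objects and composition multiplicities.

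Next, transfer the two hypotheses of Proposition~\ref{c2:thm:loopless} to $\Gamma_I$.

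\emph{Property $\X$.} By Lemma~\ref{c2:lem:graded-composition-factors}, indecomposable objects of $\Lambda\modu_{\mathrm{gr}}^I$ with equal graded multiplicities are isomorphic. The simple objects there are the $T_{i,v}$ with $v\in I$ (Lemma~\ref{c2:lem:graded-simple-self-extensions}), so graded multiplicities are exactly composition multiplicities over $\Gamma_I$. Thus $\X(\Gamma_I)$ holds.

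\emph{No simple self-extensions.} Lemma~\ref{c2:lem:graded-simple-self-extensions}, applied in $\Lambda\modu_{\mathrm{gr}}^I$, together with \eqref{c2:eq:zero-degree-self-extension-condition} gives $\Ext^1(T_{i,v},T_{i,v})=0$ there. Ext groups are preserved by the equivalence.

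Since $k$ is infinite, Proposition~\ref{c2:thm:loopless} shows that $\Gamma_I$ is representation-finite and all its indecomposables are bricks. This gives the first claim. For the second, any indecomposable graded $M$ has finite support, so it lies in some $\Lambda\modu_{\mathrm{gr}}^I$. Graded endomorphisms of $M$ are computed in that full subcategory, so $\End_{\mathrm{gr}\Lambda}(M)$ is a division ring. The ``in particular'' clause is the last part of Lemma~\ref{c2:lem:graded-simple-self-extensions}.

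Finally, assume every indecomposable graded module has a shift supported in $I_0$. Only finitely many graded indecomposables, up to isomorphism, are supported in $I_0$. Every indecomposable graded module is a shift of one of these, so only finitely many underlying ungraded modules arise. It remains to see that every indecomposable ungraded $\Lambda$-module is gradable. This is the main obstacle, and I would resolve it with Gordon--Green \cite{GordonGreen1982}: if a $\mathbb Z^r$-graded finite-dimensional algebra has only finitely many indecomposable graded modules up to shift, then it is representation-finite and every indecomposable is gradable. For $r>1$ I would use the version in Farnsteiner \cite{Farnsteiner2009}. If needed, the $r>1$ case can be reduced to the $r=1$ case by composing the grading with a generic homomorphism $\mathbb Z^r\to\mathbb Z$ that is injective on the finite set $I_0-I_0$ together with the degrees of $\Lambda$. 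That would give representation-finiteness of $\Lambda$.
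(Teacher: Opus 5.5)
Your proposal follows the paper's proof. You identify $\Lambda\modu_{\mathrm{gr}}^I$ with $\Gamma\modu$, transfer $\X$ by Lemma~\ref{c2:lem:graded-composition-factors} and the vanishing of simple self-extensions by Lemma~\ref{c2:lem:graded-simple-self-extensions}, apply Proposition~\ref{c2:thm:loopless}, take $I=\operatorname{supp}(M)$ for the endomorphism ring, and finish with the Gordon--Green argument. There are two differences. You construct the equivalence by hand instead of citing \cite[Theorem~2.1]{Farnsteiner2009}, and you cite rather than prove the $\mathbb Z^r$-graded Gordon--Green implication. Both places need a repair.

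First, your $N_v^i$ is the wrong submodule: the largest graded submodule supported outside $I$ need not give a quotient supported in $I$. Take $\Lambda=k[x]/(x^3)$ with $\deg x=1$, generator in degree $0$, and $I=\{0,2\}$. Every nonzero submodule contains $x^2$, which has degree $2\in I$. So that submodule is $0$, and your ``truncation'' is $\Lambda$ itself, which is not an object of $\Lambda\modu_{\mathrm{gr}}^I$. Instead, factor out the submodule generated by the homogeneous components in degrees outside $I$. With that choice your projectivity and generation arguments are correct.

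Second, your fallback reduction to $r=1$ via a generic $\phi:\mathbb Z^r\to\mathbb Z$ is not justified. Graded modules over the coarsened $\mathbb Z$-graded algebra need not a priori refine to $\mathbb Z^r$-gradings. So finiteness up to shift does not pass to the coarsening without a push-down density (covering) argument. Injectivity of $\phi$ on finitely many degrees does not supply one.

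The paper avoids this by proving the $\mathbb Z^r$ statement directly:
\begin{itemize}
\item in each block, choose an indecomposable projective, which is gradable by \cite[Proposition~1.4(2)]{Farnsteiner2009};
\item its Auslander--Reiten component consists of gradable modules by \cite[Corollary~2.4]{Farnsteiner2009}, so it is finite with bounded lengths;
\item \cite[Theorem~VI.1.4]{ARS} then makes the block representation-finite.
\end{itemize}
Use that argument in place of the reduction.
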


\begin{proof}
Fix a finite nonempty subset $I\subseteq\mathbb Z^r$, and put
$\mathcal C=\Lambda\modu_{\mathrm{gr}}^I$.
By \cite[Theorem~2.1]{Farnsteiner2009}, there is a
finite-dimensional $k$-algebra $\Gamma$ and an equivalence
\[
 \Phi:\mathcal C\xrightarrow{\sim}\Gamma\modu.
\]
By Lemma~\ref{c2:lem:graded-composition-factors}, any two
indecomposable objects of $\mathcal C$ with the same composition
multiplicities are isomorphic. Since $\Phi$ is an equivalence, $\Gamma$ satisfies $\X$.

By Lemma~\ref{c2:lem:graded-simple-self-extensions} and
\eqref{c2:eq:zero-degree-self-extension-condition}, every
simple object of $\mathcal C$ has no nontrivial self-extension.
The equivalence $\Phi$ is exact, so
\[
 \operatorname{Ext}^1_\Gamma(T,T)=0
 \qquad\text{for every simple $\Gamma$-module $T$}.
\]
Proposition~\ref{c2:thm:loopless} now implies that $\Gamma$
has finite representation type and that every indecomposable
left $\Gamma$-module is a brick. Transferring these conclusions
through $\Phi$ proves that $\mathcal C$ has finitely many
isomorphism classes of indecomposable objects and that each of its
indecomposable objects has a division endomorphism ring.

For an arbitrary indecomposable graded $\Lambda$-module $M$,
apply the preceding argument with $I=\operatorname{supp}(M)$.
This proves the assertion about
$\operatorname{End}_{\mathrm{gr}\Lambda}(M)$.
The assertion concerning division corners follows from
Lemma~\ref{c2:lem:graded-simple-self-extensions}.

It remains to prove the assertion under the additional
uniform support hypothesis. Fix the finite set $I_0$ and
choose representatives $N_1,\ldots,N_t$ of the isomorphism classes of indecomposable objects of $\Lambda\modu_{\mathrm{gr}}^{I_0}$.
Indecomposability in this category agrees with
indecomposability in the full graded module category,
since graded direct summands remain supported in $I_0$.
Let $F$ denote the forgetful functor. By
\cite[Corollary~1.3]{Farnsteiner2009}, each $F(N_j)$ is
indecomposable. Conversely, any ungraded indecomposable
module admitting a grading is indecomposable with that
grading, and the hypothesis puts some shift of it in
$\Lambda\modu_{\mathrm{gr}}^{I_0}$.
Since shifts do not change the underlying module, every
gradable ungraded indecomposable $\Lambda$-module is
isomorphic to one of $F(N_1),\ldots,F(N_t)$.

We now use the $\mathbb Z^r$-graded form of the implication
\textup{(ii)}$\Rightarrow$\textup{(i)} in
\cite[Theorem~4.5]{GordonGreen1982}.
For completeness, its proof in the present setting is as follows.
Let $C$ be any block of the underlying ungraded algebra $\Lambda$,
and choose an indecomposable projective $C$-module $P$.
As an ungraded $\Lambda$-module, $P$ is projective and is
gradable by \cite[Proposition~1.4(2)]{Farnsteiner2009}.
Let $\Theta$ be the component containing $P$ in the Auslander--Reiten quiver of $\Lambda$.
By \cite[Corollary~2.4]{Farnsteiner2009}, every module in
$\Theta$ is gradable. Hence $\Theta$ has finitely many vertices,
and the composition lengths of its modules are bounded.
The ungraded module category of $\Lambda$ is the product of
the module categories of its blocks, so $\Theta$ is also a
component of the Auslander--Reiten quiver of $C$.
By \cite[Theorem~VI.1.4, p.~193]{ARS}, $C$
is representation-finite. Since $\Lambda$ has finitely
many blocks, $\Lambda$ is representation-finite as well.
\end{proof}

The next proposition bounds the support of an indecomposable graded
module in each coordinate direction. The bounds depend only on the
Loewy lengths of the primitive corners.

\begin{proposition}
\label{c2:prop:bounded-decomposition}
Let $k$ be an infinite field. In the above notation, assume that
$r=s$ and that $\Lambda$ satisfies $\X$, and let
$\varepsilon_1,\ldots,\varepsilon_s$ be the standard basis of
$\mathbb Z^s$. Put $J=J(\Lambda)$.
Suppose that $f_i(J/J^2)f_i$ is concentrated
in degree $\varepsilon_i$ for every $i$, and that
$f_i(J/J^2)f_j$ is concentrated in degree zero whenever $i\ne j$.
Then every finite-dimensional indecomposable graded
$\Lambda$-module $M$ satisfies
\[
 \max_{v\in\operatorname{supp}(M)}v_i
 -
 \min_{v\in\operatorname{supp}(M)}v_i
 \le \operatorname{LL}(f_i\Lambda f_i)-1
 \qquad(1\le i\le s).
\]
\end{proposition}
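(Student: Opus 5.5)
Fix $i$ and an indecomposable graded $\Lambda$-module $M$. The plan is to split $M$ along the $i$-th coordinate into ``slabs'' that are glued only through the loop at vertex $i$, and then use the uniserial structure of $f_i\Lambda f_i$ to show that gluing cannot cross more than $\operatorname{LL}(f_i\Lambda f_i)-1$ steps.

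\emph{Step 1 (degrees of the generators).} Since $\Lambda$ is basic, $J$ is generated as an ideal modulo $J^2$ by homogeneous lifts of $f_a(J/J^2)f_b$. By hypothesis these lie in degree $\varepsilon_a$ when $a=b$ and in degree $0$ when $a\ne b$, and $\Lambda_0/J_0$ sits in degree zero. Hence the $i$-th coordinate of the degree of a homogeneous path-like product $f_a x f_b$ equals the number of loop factors at $i$ it contains.

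\emph{Step 2 (bound on loop factors).} Such a product contains $t$ loop factors at $i$ separated by other factors. If two consecutive factors are interrupted by a detour through $j\ne i$, then Lemma~\ref{c2:lem:no-return} (as far as it applies to $\Lambda$) forces the product to vanish. This needs care, since for $\Lambda$ we only have $\X$; I would rederive $f_i\Lambda f_j\Lambda f_i\subseteq$ the appropriate part from the proof of Lemma~\ref{c2:lem:no-return}, which uses only $\X$ and basicness. With that, all nonzero products raising the $i$-degree lie in $f_i\Lambda f_i$ or are built from a single segment of $f_iJf_i$-powers, flanked by degree-zero arrows in and out of $i$. So the $i$-degree of any nonzero homogeneous element of $\Lambda$ lies in $\{0,\dots,\operatorname{LL}(f_i\Lambda f_i)-1\}$, because $(f_iJf_i)^{\ell}=0$ for $\ell=\operatorname{LL}(f_i\Lambda f_i)$.

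\emph{Step 3 (decomposition).} Suppose the $i$-range of $\operatorname{supp}(M)$ exceeds $\ell-1$. Consider the graded submodules generated by homogeneous pieces, and decompose $M$ by residue classes. More concretely, let $c=\min v_i$ and set $M'=\Lambda\bigl(\bigoplus_{v_i\le c+\ell-1}M_v\bigr)$ and $M''$ generated by the rest. By Step 2, $M'$ misses nothing, but this alone only gives a filtration and not a splitting. To get indecomposability to fail, I would instead use Proposition~\ref{c2:prop:finite-support-criterion}'s consequence that $\operatorname{End}_{\mathrm{gr}}(M)$ is a division ring, via Lemma~\ref{c2:lem:graded-simple-self-extensions}: here $f_i(J/J^2)_0f_i=0$ because loops have degree $\varepsilon_i\neq0$. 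Take a simple top summand $T_{i,v}$ of minimal $i$-degree and a simple socle summand of maximal $i$-degree, in the spirit of the gap. If the range is at least $\ell$, one can produce a nonzero nilpotent graded endomorphism, namely $M\to\topm M\to T\hookrightarrow\soc M\to M$ when top and socle share a simple of the same degree. Otherwise, I would use the inverse shift $M[\varepsilon_i]$-type comparison together with Lemma~\ref{c2:lem:graded-composition-factors}.

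\emph{Main obstacle.} The hardest part is Step 3: turning the local degree bound on $\Lambda$ into a global bound on indecomposable $M$, since the connectivity through vertices $j\ne i$ (with degree-zero arrows) can chain many loop-segments at $i$ across different elements of $M$. My first attempt would be to show that the subcategory of graded modules is equivalent, coordinatewise, to a covering in which vertex $i$ appears with $i$-degree $n$ and arrows between different $i$-layers exist only through $f_iJf_i$. I would then argue, using $\X$ in graded form (Lemma~\ref{c2:lem:graded-composition-factors}) and the brick property, that an indecomposable graded module whose $i$-layers span more than $\ell$ values has two distinct graded submodule quotients with equal graded composition multiplicities. This mimics the proof of Lemma~\ref{2.3}, with the pushout construction there giving a contradiction.
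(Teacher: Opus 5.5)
Your Steps 1 and 2 are essentially right. The observation that $\operatorname{End}_{\mathrm{gr}\Lambda}(M)$ is a division ring (since $f_i(J/J^2)_0f_i=0$) also matches the paper. Lemmas~\ref{c2:lem:no-return} and~\ref{2.3} need no rederivation: they are stated for basic algebras satisfying $\X$, so they apply to $\Lambda$ directly. But Step 3, which carries the whole content, is not an argument, so there is a genuine gap.

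A bound on the $i$-degrees of homogeneous elements of $\Lambda$ does not bound the supports of graded indecomposables; think of a Kronecker algebra with arrows in degrees $0$ and $1$. What must be excluded, using $\X$, is an indecomposable $M$ that glues several $B_i$-uniserial pieces with staggered $i$-ranges through degree-zero arrows at other vertices. Your one concrete device, the endomorphism $M\to\topm M\to T\hookrightarrow\soc M\to M$, is valid but applies only when some simple occurs in the $\Lambda$-top and the $\Lambda$-socle of $M$ in the same degree. Nothing forces this when the $i$-range is at least $\ell$. The ``otherwise'' case (a shift comparison, a covering, a pushout as in Lemma~\ref{2.3}) is left unspecified. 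Lemma~\ref{c2:lem:graded-composition-factors} gives nothing unless you actually construct a second indecomposable with the same graded multiplicities.

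The missing idea is to work with the $B$-module structure, $B=\prod_if_i\Lambda f_i$, instead of $\Lambda$-tops and $\Lambda$-socles.
\begin{itemize}
\item Since $\Lambda=B\oplus H$ with $H=\bigoplus_{i\ne j}f_i\Lambda f_j$ in degree $0$ and $J(B)H=HJ(B)=0$, one has $HM\subseteq\soc_BM$ and $HJ(B)M=0$.
\item Hence every $B$-endomorphism of $M$ that kills $\soc_BM$ and lands in $J(B)M$ is automatically $\Lambda$-linear.
\item Decompose $f_iM$ into graded uniserial $B_i$-modules, which are shifts of $B_i/J_i^h$ supported on segments of $\varepsilon_i$-lines. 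A top-to-socle map between two nonsimple summands, extended by zero, is then a square-zero graded $\Lambda$-endomorphism of $M$.
\item The division-ring property, reapplied after coarsening the grading modulo $\mathbb Z\delta_0$ for the primitive vector $\delta_0$ of the degree, shows such endomorphisms have degrees in $\bigcup_j(\mathbb Z\setminus\{0\})\varepsilon_j$. This forces all nonsimple summands onto a single $\varepsilon_i$-line.
\item A suitable degree-zero map between two overlapping summands shows their supports are nested or disjoint.
\item Cut $M$ along the $i$-th coordinate at the two ends of a summand with maximal support. This gives a graded $\Lambda$-decomposition $M=M^-\oplus M^0\oplus M^+$, because $B_j$ for $j\ne i$ and $H$ preserve the $i$-th coordinate.
\item Indecomposability gives $M=M^0$, so the range is at most $h-1\le\ell_i-1$. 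The case $J_iM=0$ is handled by the same slicing.
\end{itemize}
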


\begin{proof}
Fix a nonzero indecomposable graded $\Lambda$-module $M$. Put
\[
 B_i=f_i\Lambda f_i,\qquad
 J_i=J(B_i)=f_iJf_i,\qquad
 \ell_i=\operatorname{LL}(B_i),
\]
and regard
\[
 B=\prod_{i=1}^sB_i,\qquad
 H=\bigoplus_{i\ne j}f_i\Lambda f_j
\]
as subspaces of $\Lambda$, with $B$ a unital subalgebra.

We will use two consequences of the hypotheses throughout the proof.
First, the underlying ungraded $\Lambda$-module of $M$ is indecomposable by
\cite[Corollary~1.3]{Farnsteiner2009}.
Second, since $f_i(J/J^2)f_i$ is concentrated in the nonzero
degree $\varepsilon_i$, one has
\[
 f_i(J/J^2)_0f_i=0\qquad(1\le i\le s).
\]
Proposition~\ref{c2:prop:finite-support-criterion} therefore implies
that $\operatorname{End}_{\mathrm{gr}\Lambda}(M)$ is a division ring.
In particular, $M$ has no nonzero degree-zero endomorphism whose
square is zero.

\smallskip
\noindent\textbf{Claim 1.}
The radical $J$ is homogeneous, $\Lambda/J$ is concentrated
in degree zero, and each $B_i$ is a local Nakayama algebra.
Moreover, with $J_i^0=B_i$, one has
\begin{equation}
\label{c2:eq:abstract-corner-grading}
 J_i^q
 =
 \bigoplus_{p=q}^{\ell_i-1}(B_i)_{p\varepsilon_i}
 \quad(0\le q\le\ell_i),
 \qquad
 (B_i)_0\cong C_i=B_i/J_i.
\end{equation}
The subspace $H$ is concentrated in degree zero, and
\begin{equation}
\label{c2:eq:abstract-diagonal-decomposition}
 \Lambda=B\oplus H,\qquad
 J=J(B)\oplus H,\qquad
 J(B)H=HJ(B)=0.
\end{equation}
The direct sums in \eqref{c2:eq:abstract-diagonal-decomposition}
are direct sums of graded vector spaces.

\smallskip
\noindent\emph{Proof of Claim 1.}
Since $\Lambda$ is basic, $\Lambda/J$ is a product of division
algebras and hence has no nonzero nilpotent elements.
Lemma~\ref{c2:prop:graded-radical-reduced-quotient} shows that
$J$ is homogeneous and $\Lambda/J$ is concentrated in degree zero.
As the $f_i$ have degree zero, all the subspaces $B_i$, $J_i$,
$B$, and $H$ are homogeneous. Moreover, basicness gives
$f_i\Lambda f_j=f_iJf_j$ for $i\ne j$. Thus
\[
 \Lambda=B\oplus H,\qquad
 J=\bigoplus_iJ_i\oplus H=J(B)\oplus H.
\]
In particular, $J(B)\subseteq J$.
By Lemma~\ref{c2:lem:no-return},
\begin{equation}\label{iajai}
 f_i\Lambda f_j\Lambda f_i=0\qquad(i\ne j),
\end{equation}
and there is an algebra epimorphism $\Lambda\to B_i$. Thus $B_i$ satisfies $\X$ and is a local Nakayama
algebra by Lemma~\ref{c2:lem:equal-top-socle}(2).
Lemma~\ref{2.3} also gives
\begin{equation}
\label{c2:eq:abstract-corner-annihilation}
 J_i(f_i\Lambda f_j)=0=(f_i\Lambda f_j)J_j
 \qquad(i\ne j).
\end{equation}
By (\ref{iajai}), we have
\[
 f_iJ^2f_i
 =
 J_i^2+\sum_{j\ne i}f_iJf_jJf_i
 =
 J_i^2.
\]
Consequently, the map
\[
 J_i/J_i^2\longrightarrow f_i(J/J^2)f_i,\qquad
 x+J_i^2\longmapsto x+J^2
\]
is an isomorphism of graded bimodules. In particular,
$J_i/J_i^2$ is concentrated in degree $\varepsilon_i$.

The quotient $B_i/J_i$ is concentrated in degree zero.
For $q\ge1$, every element of $J_i^q/J_i^{q+1}$ is a sum of
classes of products $x_1\cdots x_q$, with $x_\nu\in J_i$.
Choose $u_\nu\in(J_i)_{\varepsilon_i}$ such that
$x_\nu-u_\nu\in J_i^2$. Expanding the product gives the equality
in the quotient
\[
 x_1\cdots x_q+J_i^{q+1}
 =
 u_1\cdots u_q+J_i^{q+1}.
\]
Indeed, every other term contains one factor from $J_i^2$
and $q-1$ factors from $J_i$.
Thus $J_i^q/J_i^{q+1}$ is concentrated in degree
$q\varepsilon_i$.

In the finite filtration
$B_i\supseteq J_i\supseteq\cdots\supseteq J_i^{\ell_i}=0$,
the only possibly nonzero quotient in degree $p\varepsilon_i$
is $J_i^p/J_i^{p+1}$. For $q=0,\ldots,p-1$, the degree-$p\varepsilon_i$ component
of $J_i^q/J_i^{q+1}$ is zero. Hence
$(J_i^q)_{p\varepsilon_i} = (J_i^{q+1})_{p\varepsilon_i}.$
Successively applying these equalities, with $J_i^0=B_i$, gives
\[
 (J_i^q)_{p\varepsilon_i}
 =
 (B_i)_{p\varepsilon_i}
 \qquad(0\le q\le p).
\]

Similarly, for $q=p+1,\ldots,\ell_i-1$, the
degree-$p\varepsilon_i$ component of $J_i^q/J_i^{q+1}$
is zero. Since $J_i^{\ell_i}=0$, we obtain
\[
 (J_i^{p+1})_{p\varepsilon_i}
 =
 \cdots
 =
 (J_i^{\ell_i})_{p\varepsilon_i}
 =
 0.
\]
Every degree outside $\{0,\varepsilon_i,\ldots,
(\ell_i-1)\varepsilon_i\}$ vanishes throughout the filtration.
This proves \eqref{c2:eq:abstract-corner-grading}.
In particular, the quotient map restricts to an algebra
isomorphism $(B_i)_0\to C_i$.

The identities \eqref{c2:eq:abstract-corner-annihilation} imply
$J(B)H=HJ(B)=0$. Since
$J(B)=\bigoplus_iJ_i$ is an ideal of $B$ and $\Lambda=B\oplus H$,
it is a two-sided ideal of $\Lambda$; it is homogeneous by the
preceding observations.
Form the graded quotient and its radical
\[
 \overline\Lambda=\Lambda/J(B),\qquad
 N=J(\overline\Lambda)=J/J(B).
\]
One has graded isomorphisms
\[
 N/N^2
 \cong J/(J^2+J(B))
 \cong
 (J/J^2)\big/\bigl((J^2+J(B))/J^2\bigr).
\]
The image of $J(B)$ in $J/J^2$ is exactly
\[
 (J^2+J(B))/J^2
 =
 \bigoplus_i f_i(J/J^2)f_i.
\]
Indeed, the image of $J_i=f_iJf_i$ is
$f_i(J/J^2)f_i$, and multiplication by the orthogonal
idempotents separates these diagonal components.
The remaining components are $f_i(J/J^2)f_j$ with $i\ne j$,
all of degree zero by hypothesis. Hence $N/N^2$ is
concentrated in degree zero.

The same radical-filtration argument shows that $N^q/N^{q+1}$ is concentrated in degree zero
for every $q \ge 1$. Since $\overline\Lambda/N\cong\Lambda/J$ is also concentrated in degree zero and $N$ is nilpotent,
$\overline{\Lambda}$ is concentrated in degree zero.
For $g\ne0$, this implies $\Lambda_g\subseteq J(B)$.
Since $H\cap J(B)=0$ and $H$ is homogeneous, $H\subseteq\Lambda_0$.
Together with the decompositions established at the beginning,
this proves Claim~1.

\smallskip
\noindent\textbf{Claim 2.}
For each $i$, fix a decomposition of $f_iM$ into indecomposable
graded $B_i$-modules. Combining these decompositions gives a
graded $B$-module decomposition of $M$.
Every summand $U$ belonging to $f_iM$ has,
for some $a\in\mathbb Z^s$ and $1\le h\le\ell_i$,
a graded isomorphism
$U\cong(B_i/J_i^h)[-a].$
Its support and radical powers are
\begin{equation}
\label{c2:eq:p55-local-support}
 \operatorname{supp}(U)
 =\{a,a+\varepsilon_i,\ldots,a+(h-1)\varepsilon_i\},
 \qquad
 J_i^pU=\bigoplus_{q=p}^{h-1}U_{a+q\varepsilon_i}
 \quad(0\le p\le h).
\end{equation}
Its top is concentrated in degree $a$, and
$\operatorname{soc}_{B_i}U=U_{a+(h-1)\varepsilon_i}$ is simple.

\smallskip
\noindent\emph{Proof of Claim 2.}
Such a graded decomposition exists by induction on dimension.
By \cite[Corollary~1.3]{Farnsteiner2009}, the underlying ungraded
$B_i$-module of every summand $U$ is indecomposable.
Since $B_i$ is a local Nakayama algebra by Claim~1,
this underlying ungraded module is isomorphic to $B_i/J_i^h$
for some $1\le h\le\ell_i$.
The ideal $J_i^h$ is homogeneous by
\eqref{c2:eq:abstract-corner-grading}, so $B_i/J_i^h$
has its quotient grading. Its underlying ungraded module is
indecomposable, and hence it is also indecomposable as a
graded module. By \cite[Theorem~1.5]{Farnsteiner2009},
there exists $a\in\mathbb Z^s$ such that
$U\cong(B_i/J_i^h)[-a]$
as graded modules. Under this isomorphism, the class
$1+J_i^h$ corresponds to a homogeneous element $u\in U_a$
satisfying
\[
 U=B_i u,\qquad \operatorname{ann}_{B_i}(u)=J_i^h.
\]

Formula \eqref{c2:eq:abstract-corner-grading} now gives
the support and radical-power descriptions in
\eqref{c2:eq:p55-local-support}.
Every displayed support component is nonzero: for
$0\le q<h$, the equality $J_i^q=J_i^{q+1}$ would imply,
by iteration and nilpotence, that $J_i^q=0$, contrary
to $q<\ell_i$.
Finally, $B_i/J_i^h$ is uniserial of length $h$.
Consequently, the top of $U$ is concentrated in degree $a$,
and its socle is the simple module
$J_i^{h-1}U=U_{a+(h-1)\varepsilon_i}$.
This proves Claim~2.

\smallskip
\noindent\textbf{Claim 3.}
Every $B$-module endomorphism $\varphi$ of $M$ satisfying
\begin{equation}
\label{c2:eq:p55-extension-condition}
 (\operatorname{soc}_B M)\varphi=0,\qquad
 (M)\varphi\subseteq J(B)M
\end{equation}
is a $\Lambda$-module homomorphism.

\smallskip
\noindent\emph{Proof of Claim 3.}
By Claim~1, we have
\[
 HM\subseteq\operatorname{soc}_B M,\qquad
 HJ(B)M=0.
\]
For $h\in H$ and $m\in M$, condition
\eqref{c2:eq:p55-extension-condition} gives
$(hm)\varphi=0=h((m)\varphi).$
Thus $\varphi$ respects the action of $H$ as well as that
of $B$. Since $\Lambda=B+H$, it is a $\Lambda$-module homomorphism. This proves Claim~3.

In particular, let $U$ and $W$ be distinct summands in the
fixed graded $B$-module decomposition of $M$, and let
$\theta:U\to W$ be a $B$-module homomorphism such that
\[
 (\operatorname{soc}_B U)\theta=0,
 \qquad (U)\theta\subseteq J(B)W.
\]
Extending $\theta$ by zero on every other summand gives a
$\Lambda$-module endomorphism of $M$ whose square is zero.
Indeed, the socle and the radical commute with these finite
direct sums, so Claim~3 applies, and the extension vanishes on
$W$, which contains its image. If $\theta$ is homogeneous,
then the extension has the same degree.

\smallskip
\noindent\textbf{Claim 4.}
If $\varphi$ is a nonzero homogeneous $\Lambda$-module endomorphism
of $M$ and $\varphi^2=0$, then
\begin{equation*}
 \deg\varphi\in
 \bigcup_{j=1}^s(\mathbb Z\setminus\{0\})\varepsilon_j.
\end{equation*}

\smallskip
\noindent\emph{Proof of Claim 4.}
By Proposition~\ref{c2:prop:finite-support-criterion}, we have $\deg\varphi\neq0$.
Put $\delta=\deg\varphi\ne0$. Suppose that $\delta$ has at
least two nonzero coordinates. Set
\[
 d=\gcd(|\delta_1|,\ldots,|\delta_s|)>0,\qquad
 \delta_0=\delta/d.
\]
The vector $\delta_0$ is primitive. Let
\[
 \pi:\mathbb Z^s\longrightarrow
 G:=\mathbb Z^s/\mathbb Z\delta_0
 \cong\mathbb Z^{s-1}
\]
be the canonical projection.
Since $\delta$
has at least two nonzero coordinates, $(\varepsilon_j)\pi\ne0$ for every $j$.

Coarsen the gradings along $\pi$ by defining
\[
 \Lambda^\pi_g=\bigoplus_{(v)\pi=g}\Lambda_v,\qquad
 M^\pi_g=\bigoplus_{(v)\pi=g}M_v
 \qquad(g\in G).
\]
These formulas give a graded algebra and a graded module
because $\pi$ is a group homomorphism. The idempotents $f_j$
still have degree zero. Since $J$ and $J^2$ are homogeneous,
the grading induced on $J/J^2$ is its coarsening along $\pi$.
The underlying module of $M^\pi$ is the indecomposable module underlying $M$. Hence
$M^\pi$ is indecomposable as a graded module.
The radical component $f_j(J/J^2)f_j$ now has
degree $(\varepsilon_j)\pi\ne0$. After identifying
$G$ with $\mathbb Z^{s-1}$, Proposition~
\ref{c2:prop:finite-support-criterion} therefore gives a
division ring
$\operatorname{End}_{\mathrm{gr}\Lambda^\pi}(M^\pi)$.
But $(\delta)\pi=0$, so $\varphi$ is a nonzero degree-zero
square-zero endomorphism of $M^\pi$, a contradiction.
This proves Claim~4.

\smallskip
\noindent\textbf{Claim 5.}
Fix $i$. There exists $a\in\mathbb Z^s$ such that
$\operatorname{supp}(U)\subseteq a+\mathbb Z\varepsilon_i$
for every nonsimple summand $U$ in the chosen decomposition
of $f_iM$ into indecomposable graded $B_i$-modules.

\smallskip
\noindent\emph{Proof of Claim 5.}
Consider two distinct nonsimple summands $U,W$ with
first and last support degrees
\[
 a,\ a+q\varepsilon_i,\qquad
 c,\ c+t\varepsilon_i,\qquad q,t>0.
\]
By Claim~2, these top and socle degrees are well-defined.
The top of $U$ and the socle of $W$ are isomorphic to the
unique simple left $B_i$-module. Choose an isomorphism
between them and form the nonzero composite
\[
 U\twoheadrightarrow U/J_iU
 \xrightarrow{\ \sim\ }\operatorname{soc}_{B_i}W
 \hookrightarrow W.
\]
It is homogeneous of degree $c+t\varepsilon_i-a$.
Because $U$ and $W$ are nonsimple uniserial modules,
this map kills $\operatorname{soc}_{B_i}U$ and its image
lies in $J_iW$. Extend it by zero on all other summands
of the graded $B$-module $M$.
The extension satisfies
\eqref{c2:eq:p55-extension-condition}, so it is a
$\Lambda$-module homomorphism by Claim~3. Clearly, it is square-zero.
Interchanging $U$ and $W$ gives another such endomorphism.
By Claim~4, both degrees
\[
 d_1=c+t\varepsilon_i-a,\qquad
 d_2=a+q\varepsilon_i-c
\]
are nonzero integer multiples of individual standard
basis vectors. Their sum is
$d_1+d_2=(q+t)\varepsilon_i\ne0.$
If $d_1=m\varepsilon_j$ with $j\ne i$ and $m\ne0$,
then $d_2=(q+t)\varepsilon_i-m\varepsilon_j$ would have
two nonzero coordinates, a contradiction. Thus both
$d_1$ and $d_2$ belong to $\mathbb Z\varepsilon_i$,
and consequently $c-a\in\mathbb Z\varepsilon_i$.
Fixing one nonsimple summand proves that every such
support is contained in the same set
$a+\mathbb Z\varepsilon_i$. If there is at most one
nonsimple summand, the assertion is immediate.

\smallskip
\noindent\textbf{Claim 6.}
Fix $i$, and let $U$ and $W$ be any two nonsimple summands
in the chosen decomposition of $f_iM$ into indecomposable
graded $B_i$-modules. Then at least one of the following holds:
\[
 \operatorname{supp}(U)\cap\operatorname{supp}(W)=\varnothing,
 \qquad
 \operatorname{supp}(U)\subseteq\operatorname{supp}(W),
 \qquad
 \operatorname{supp}(W)\subseteq\operatorname{supp}(U).
\]

\smallskip
\noindent\emph{Proof of Claim 6.}
Suppose that
$\operatorname{supp}(U)\cap\operatorname{supp}(W)\ne\varnothing$
and that neither support is contained in the other. Put
\[
 \begin{aligned}
 a&=\min\{v_i\mid v\in\operatorname{supp}(U)\},
 &
 b&=\max\{v_i\mid v\in\operatorname{supp}(U)\},\\
 c&=\min\{v_i\mid v\in\operatorname{supp}(W)\},
 &
 d&=\max\{v_i\mid v\in\operatorname{supp}(W)\}.
 \end{aligned}
\]
By Claim~2,
\[
 \{v_i\mid v\in\operatorname{supp}(U)\}
 =\{a,a+1,\ldots,b\},
 \qquad
 \{v_i\mid v\in\operatorname{supp}(W)\}
 =\{c,c+1,\ldots,d\}.
\]
Moreover, Claim~5 gives
\[
 v_j=w_j
 \qquad
 \bigl(
 v\in\operatorname{supp}(U),\
 w\in\operatorname{supp}(W),\
 j\ne i
 \bigr).
\]
Thus intersection and inclusion of the two supports are
equivalent to intersection and inclusion of their sets of
$i$th coordinates. Without loss of generality, we may assume
\begin{equation*}
 c<a\le d<b.
\end{equation*}

Choose $v\in\operatorname{supp}(U)$ with $v_i=a$, and put
$h=b-a+1$. By Claim~2, there is a homogeneous cyclic
generator $u\in U_v$ such that
\[
 U=B_i u,
 \qquad
 \operatorname{ann}_{B_i}(u)=J_i^h,
 \qquad
 \operatorname{soc}_{B_i}U
 =U_{v+(h-1)\varepsilon_i}.
\]
Since $c<a\le d$ and all coordinates other than the $i$th
agree on the two supports, Claims~2 and~5 imply that
$v\in\operatorname{supp}(W)$. Choose $0\ne w\in W_v$.

By Claim~1,
\[
 J_i^h w
 \subseteq
 \bigoplus_{p=h}^{\ell_i-1}W_{v+p\varepsilon_i}
 =0.
\]
Indeed, every degree occurring on the right has $i$th
coordinate at least
$a+h=b+1>d,$
whereas $d$ is the largest $i$th coordinate in
$\operatorname{supp}(W)$.
Since $\operatorname{ann}_{B_i}(u)=J_i^h$, the assignment
\[
 \theta:U\longrightarrow W,
 \qquad
 (xu)\theta=xw
 \quad(x\in B_i)
\]
is a well-defined $B_i$-module homomorphism. It is nonzero because
$(u)\theta=w\ne0$.
The map $\theta$ has degree zero.

Furthermore, the $i$th coordinate of
$v+(h-1)\varepsilon_i$ is $b>d$, so
\[
 (\operatorname{soc}_{B_i}U)\theta
 =
 (U_{v+(h-1)\varepsilon_i})\theta
 \subseteq
 W_{v+(h-1)\varepsilon_i}
 =0.
\]
Since $a>c$, Claim~2 also gives $W_v\subseteq J_iW$.
Consequently,
$(U)\theta=B_iw\subseteq J_iW.$

The summands $U$ and $W$ are distinct, since neither support contains the other. By the observation following Claim 3, extending $\theta$ by zero on the remaining B-summands gives a nonzero
degree-zero $\Lambda$-module endomorphism $\varphi$ of $M$ with $\varphi^2=0$. This contradicts the fact that $\End_{\mathrm gr \Lambda}(M)$ is a division ring. This proves Claim 6.

\smallskip
\noindent\emph{Completion of the proof.}
Fix $i$. We prove
\[
 \max_{v\in\operatorname{supp}(M)}v_i
 -
 \min_{v\in\operatorname{supp}(M)}v_i
 \le \ell_i-1.
\]

Suppose first that $J_iM=0$. By Claim~1, every homogeneous
component of $B_i$ of nonzero degree is contained in $J_i$
and therefore acts trivially on $M$. The degree-zero
component of $B_i$ preserves each subspace
\[
 \bigoplus_{v_i=n}M_v
 \qquad(n\in\mathbb Z).
\]
For $j\ne i$, every nonzero homogeneous component of $B_j$
has degree $q\varepsilon_j$ for some $q\ge0$, so its action
also preserves the $i$th coordinate. The same holds for $H$,
which is concentrated in degree zero by Claim~1.
Since $\Lambda=B+H$, each of the above subspaces is a
graded $\Lambda$-submodule. Hence
\[
 M=\bigoplus_{n\in\mathbb Z}
       \left(\bigoplus_{v_i=n}M_v\right)
\]
is a graded $\Lambda$-module decomposition.
The indecomposability of $M$ implies that precisely one
summand is nonzero. Thus
\[
 \max_{v\in\operatorname{supp}(M)}v_i
 -
 \min_{v\in\operatorname{supp}(M)}v_i
 =0,
\]
and the required inequality follows.

Suppose now that $J_iM\ne0$. Then the chosen decomposition
of $f_iM$ contains a nonsimple summand. Since this
decomposition is finite, we may choose a nonsimple summand
$U$ such that $\operatorname{supp}(U)$ is maximal under
inclusion among the supports of all nonsimple summands
of $f_iM$. Put
\[
 a=\min\{v_i\mid v\in\operatorname{supp}(U)\},
 \qquad
 b=\max\{v_i\mid v\in\operatorname{supp}(U)\}.
\]

Let $W$ be any nonsimple summand in the same decomposition.
If
$\operatorname{supp}(W)\cap\operatorname{supp}(U) \ne\varnothing,$
then Claim~6 and the maximality of
$\operatorname{supp}(U)$ imply
$\operatorname{supp}(W)\subseteq\operatorname{supp}(U).$
If instead the two supports are disjoint, Claims~2 and~5
imply that
\[
 \max_{v\in\operatorname{supp}(W)}v_i<a
 \qquad\text{or}\qquad
 \min_{v\in\operatorname{supp}(W)}v_i>b.
\]

Define the graded subspaces
\[
 M^-=\bigoplus_{v_i<a}M_v,
 \qquad
 M^0=\bigoplus_{a\le v_i\le b}M_v,
 \qquad
 M^+=\bigoplus_{v_i>b}M_v.
\]
The preceding argument shows that every nonsimple summand
of $f_iM$ is contained in one of these three subspaces.
By Claim~2, every simple summand has support consisting
of a single degree, so it is also contained in one of
these subspaces. Consequently, each of
\[
 f_iM\cap M^-,
 \qquad
 f_iM\cap M^0,
 \qquad
 f_iM\cap M^+
\]
is a direct sum of whole summands from the chosen graded
$B_i$-module decomposition of $f_iM$. In particular,
each is a $B_i$-submodule.

We verify that $M^-$, $M^0$, and $M^+$ are
$\Lambda$-submodules. Let $V$ be any one of these three
subspaces. Since $f_i$ has degree zero,
$f_iV=f_iM\cap V.$
For $x\in B_i$ and $m\in V$, it follows that
$xm=x(f_im)\in f_iM\cap V\subseteq V.$
Thus $V$ is stable under $B_i$.

For $j\ne i$, Claim~1 gives
\[
 B_j=\bigoplus_{q=0}^{\ell_j-1}(B_j)_{q\varepsilon_j}.
\]
Multiplication by an element of
$(B_j)_{q\varepsilon_j}$ preserves the $i$th coordinate,
so $B_jV\subseteq V$. Likewise, $HV\subseteq V$ because
$H\subseteq\Lambda_0$. Since $\Lambda=B+H$, the subspace
$V$ is a graded $\Lambda$-submodule.

We have therefore obtained a direct sum decomposition
$M=M^-\oplus M^0\oplus M^+$
into graded $\Lambda$-submodules. The chosen summand $U$
is contained in $M^0$, so $M^0\ne0$. The indecomposability
of $M$ now gives
$M=M^0.$
By Claim~2, the length $h$ of $U$ satisfies
$b-a=h-1\le\ell_i-1.$
Since every degree in $\operatorname{supp}(M)$ has
$i$th coordinate between $a$ and $b$, we conclude that
\[
 \max_{v\in\operatorname{supp}(M)}v_i
 -
 \min_{v\in\operatorname{supp}(M)}v_i
 \le b-a
 =h-1
 \le\ell_i-1.
\]
Since $i$ was arbitrary, the proposition follows.
\end{proof}

We now apply the preceding results to the algebra $A^0$
constructed in Lemma~\ref{c2:lem:replacement}. Its primitive corner
radicals can be assigned distinct coordinate directions, while the
off-diagonal components remain in degree zero.
Retain the notation of that lemma and assume that the residue
field $k$ is infinite. By Step~(1) of its proof and
\eqref{c2:eq:cleft-new-product},
\[
 A^0=B^0\oplus H,\qquad B^0=\prod_{i=1}^s A_i^0,
 \qquad
 A_i^0=
 \begin{cases}
 D_i[X_i;\sigma_i]/(X_i^{\ell_i}),&\ell_i>1,\\
 D_i,&\ell_i=1,
 \end{cases}
\]
where
$\ell_i=\operatorname{LL}(e_iAe_i) =\operatorname{LL}(A_i^0).$
Write $\mathfrak r_i^0=J(A_i^0)$, and set $X_i=0$ when
$\ell_i=1$. We identify $A_i^0$ with $e_i^0A^0e_i^0$
using \eqref{sec5u2}, and regard each $D_i$ as the
constant-coefficient subalgebra of $A_i^0$.
The multiplication in \eqref{c2:eq:cleft-new-product} makes
$D\oplus H$ a subalgebra of $A^0$, where $D=\prod_iD_i$.
The identities \eqref{sec5u}, \eqref{sec5u2}, and
\eqref{c2:eq:cleft-new-radical} give, respectively,
\begin{equation*}
 J(B^0)H=HJ(B^0)=0,\qquad
 e_i^0He_i^0=0,\qquad
 J(A^0)=J(B^0)\oplus H.
\end{equation*}
Let $\varepsilon_1,\ldots,\varepsilon_s$ be the standard
basis of $\mathbb Z^s$. Define
\begin{equation}\label{c2:eq:coordinate-decomposition}
 (A^0)_0=D\oplus H,\qquad
 (A^0)_{q\varepsilon_i}=D_iX_i^q
 \quad(1\le q<\ell_i),
\end{equation}
with all other homogeneous components equal to zero.
These subspaces define a $\mathbb Z^s$-grading on $A^0$:
multiplication within each $A_i^0$ respects the powers of
$X_i$, products between distinct factors of $B^0$ vanish,
and the positive-degree components annihilate $H$ on both
sides by \eqref{sec5u}.
In this grading, all the primitive idempotents $e_i^0$
have degree zero.

\begin{proposition}
\label{c2:prop:infinite-field}
Let $(R,\mathfrak m,k)$ be a commutative artinian local ring
with infinite residue field $k$, and let $A$ be a basic
Artin $R$-algebra satisfying $\X$.
Let $A^0$ be the finite-dimensional basic $k$-algebra
constructed in Lemma~\ref{c2:lem:replacement}.
Then both $A^0$ and $A$ are representation-finite.
\end{proposition}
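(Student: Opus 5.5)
We apply Propositions~\ref{c2:prop:bounded-decomposition} and \ref{c2:prop:finite-support-criterion} to $\Lambda=A^0$ with the $\mathbb Z^s$-grading \eqref{c2:eq:coordinate-decomposition} and $f_i=e_i^0$, and then transfer the conclusion back to $A$ via Lemma~\ref{c2:lem:replacement}. That lemma already gives two inputs: $A^0$ is a finite-dimensional basic $k$-algebra satisfying $\X$, and $A$ is representation-finite if and only if $A^0$ is. It therefore suffices to prove that $A^0$ is representation-finite.

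First we verify the grading hypotheses. The idempotents $e_i^0$ lie in $D\subseteq(A^0)_0$ and form a complete set of pairwise orthogonal primitive idempotents, since $A^0/J(A^0)\cong D$ by \eqref{c2:eq:cleft-new-radical}. Write $J=J(A^0)=J(B^0)\oplus H$. Because $e_i^0He_i^0=0$ and $e_i^0He_j^0He_i^0=0$, we get $e_i^0Je_i^0=\mathfrak r_i^0$ and $e_i^0J^2e_i^0=(\mathfrak r_i^0)^2$. Hence $e_i^0(J/J^2)e_i^0\cong \mathfrak r_i^0/(\mathfrak r_i^0)^2=D_iX_i+(\mathfrak r_i^0)^2$, which is concentrated in degree $\varepsilon_i$; when $\ell_i=1$ it is zero. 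For $i\ne j$, we have $e_i^0Je_j^0=e_i^0He_j^0\subseteq(A^0)_0$, so $e_i^0(J/J^2)e_j^0$ is concentrated in degree zero. With $r=s$, Proposition~\ref{c2:prop:bounded-decomposition} now applies. It shows that for every indecomposable graded $A^0$-module $M$ and every $i$, the $i$th coordinates of $\operatorname{supp}(M)$ lie in an interval of length at most $\ell_i-1$.

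Shifting $M$ so that $\min_{v\in\operatorname{supp}(M)}v_i=0$ for every $i$, we see that some shift of $M$ is supported in the finite box
\[
 I_0=\prod_{i=1}^s\{0,1,\ldots,\ell_i-1\}.
\]
The same computation gives $e_i^0(J/J^2)_0e_i^0=0$, since this component sits in degree $\varepsilon_i\neq0$ (or vanishes). This is the hypothesis \eqref{c2:eq:zero-degree-self-extension-condition}. Since $k$ is infinite and $A^0$ satisfies $\X$, the final assertion of Proposition~\ref{c2:prop:finite-support-criterion} shows that $A^0$ is representation-finite. Lemma~\ref{c2:lem:replacement} then gives that $A$ is representation-finite.

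The only delicate step is the bookkeeping of degrees in $J/J^2$. One has to confirm that passing from $A^0$ to its corners does not introduce extra contributions to $e_i^0J^2e_i^0$ through $H$; this is exactly where the no-return identity in \eqref{sec5u2} is used. One also has to check that the grading \eqref{c2:eq:coordinate-decomposition} is well defined with respect to the twisted relation $X_id=(d)\sigma_iX_i$, which holds because $\sigma_i$ preserves $D_i$. Everything else is a direct citation of the preceding propositions.
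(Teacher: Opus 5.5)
Your proposal is correct and follows the paper's proof essentially step for step. You check the degree conditions on $e_i^0(J/J^2)e_j^0$ via $e_i^0He_i^0=0$ and $e_i^0He_j^0He_i^0=0$, use Proposition~\ref{c2:prop:bounded-decomposition} to place a shift of every indecomposable graded module in the box $I_0=\prod_i\{0,\ldots,\ell_i-1\}$, and conclude with the final assertion of Proposition~\ref{c2:prop:finite-support-criterion} and Lemma~\ref{c2:lem:replacement}. The paper states two points explicitly that you leave implicit: the trivial case $A=0$, and the equality $\operatorname{LL}(e_i^0A^0e_i^0)=\ell_i$ that fixes the size of the box.
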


\begin{proof}
If $A=0$, the construction in
Lemma~\ref{c2:lem:replacement} gives $A^0=0$, and there is
nothing to prove. Assume that $A\ne0$.
By Lemma~\ref{c2:lem:replacement}, $A^0$ is a
finite-dimensional basic $k$-algebra satisfying $\X$.
We equip it with the grading
\eqref{c2:eq:coordinate-decomposition}.
The idempotents $e_1^0,\ldots,e_s^0$ constructed in
Step~(1) of the proof of Lemma~\ref{c2:lem:replacement}
form a complete set of pairwise orthogonal primitive
idempotents and belong to $(A^0)_0$.

\smallskip
\noindent\emph{(1) We verify the hypotheses of
Propositions~\ref{c2:prop:finite-support-criterion}
and~\ref{c2:prop:bounded-decomposition}.}

Put $J^0=J(A^0)$.
By \eqref{c2:eq:cleft-new-radical} and \eqref{sec5u},
\[
 J^0=J(B^0)\oplus H,
 \qquad
 (J^0)^2=J(B^0)^2\oplus H^2.
\]
The second sum is direct because
$J(B^0)^2\subseteq B^0$, whereas
$H^2\subseteq H$ by \eqref{c2:eq:cleft-new-product}.
All these subspaces are homogeneous by
\eqref{c2:eq:coordinate-decomposition}.

For each $i$, the identities in \eqref{sec5u2} imply
\[
 e_i^0H^2e_i^0
 =\sum_{j=1}^s e_i^0He_j^0He_i^0
 =0.
\]
Indeed, the term with $j=i$ vanishes because
$e_i^0He_i^0=0$, and all the other terms vanish by the
same equation. Since $B^0=\prod_iA_i^0$, we obtain
\[
 e_i^0J^0e_i^0=\mathfrak r_i^0,
 \qquad
 e_i^0(J^0)^2e_i^0=(\mathfrak r_i^0)^2.
\]
Consequently, there is a graded isomorphism
\[
 e_i^0\bigl(J^0/(J^0)^2\bigr)e_i^0
 \cong\mathfrak r_i^0/(\mathfrak r_i^0)^2.
\]
The truncated skew polynomial description in Step~(1)
of the proof of Lemma~\ref{c2:lem:replacement}, together
with \eqref{c2:eq:coordinate-decomposition}, gives
\[
 \mathfrak r_i^0
 =\bigoplus_{q=1}^{\ell_i-1}D_iX_i^q,
 \qquad
 (\mathfrak r_i^0)^2
 =\bigoplus_{q=2}^{\ell_i-1}D_iX_i^q.
\]
Thus $e_i^0(J^0/(J^0)^2)e_i^0$ is concentrated in degree
$\varepsilon_i$. If $\ell_i=1$, this space is zero, so
the same assertion holds.

For $i\ne j$, one has $e_i^0B^0e_j^0=0$ because
$B^0=\prod_iA_i^0$. Thus the two radical decompositions above give
\[
 e_i^0\bigl(J^0/(J^0)^2\bigr)e_j^0
 \cong e_i^0He_j^0\big/e_i^0H^2e_j^0.
\]
This space is concentrated in degree zero because
$H\subseteq(A^0)_0$ by
\eqref{c2:eq:coordinate-decomposition}.
We have therefore verified all the hypotheses of
Proposition~\ref{c2:prop:bounded-decomposition} for $A^0$.
In particular,
\[
 e_i^0\bigl(J^0/(J^0)^2\bigr)_0e_i^0=0
 \qquad(1\le i\le s),
\]
so Proposition~\ref{c2:prop:finite-support-criterion}
also applies.

Finally, the corner identification in \eqref{sec5u2}
and Step~(1) of the proof of
Lemma~\ref{c2:lem:replacement} give
$\operatorname{LL}(e_i^0A^0e_i^0) =\operatorname{LL}(A_i^0)=\ell_i.$

\smallskip
\noindent\emph{(2) Application of the uniform support criterion.}

Set
\[
 I_0=\prod_{i=1}^s\{0,1,\ldots,\ell_i-1\}
 \subseteq\mathbb Z^s.
\]
Let $M$ be any nonzero indecomposable graded $A^0$-module,
and put
\[
 a_i=\min\{v_i\mid v\in\operatorname{supp}(M)\},
 \qquad a=(a_1,\ldots,a_s).
\]
By Proposition~\ref{c2:prop:bounded-decomposition} and
$\operatorname{LL}(e_i^0A^0e_i^0)=\ell_i$, proved in Step~(1),
\[
 0\le v_i-a_i\le\ell_i-1
 \qquad(v\in\operatorname{supp}(M),\ 1\le i\le s).
\]
Consequently,
$\operatorname{supp}(M[a]) =\operatorname{supp}(M)-a\subseteq I_0.$
The set $I_0$ is finite, nonempty, and independent of $M$.
The final assertion of
Proposition~\ref{c2:prop:finite-support-criterion} therefore
shows that $A^0$ is representation-finite.
By Lemma~\ref{c2:lem:replacement}, equivalently by the
indecomposable-class correspondence induced by
\eqref{c2:eq:cleft-iso-monoid}, $A$ is representation-finite
as well.
\end{proof}

\section{Proof of the main theorem}\label{c2:sec:completion}

We combine the reduction to a local central base ring with the two
field cases established in Section~\ref{sec:field-cases}.

\begin{proof}[Proof of Theorem~\ref{artin:main}]
Let $A$ be an Artin $R$-algebra.
We may assume that $A\ne0$.

First, we reduce to the case where $R$ is local.
Write
$R\cong R_1\times\cdots\times R_u,$
where each $R_v$ is a commutative artinian local ring.
Let $\varepsilon_1,\ldots,\varepsilon_u$ be the corresponding
central orthogonal idempotents, and put
$A_v=\varepsilon_v A.$
Omitting zero factors, we obtain
$A\cong A_1\times\cdots\times A_u,$
where $A_v$ is an Artin $R_v$-algebra.
Every indecomposable $A$-module is supported on exactly one
factor $A_v$. Inflation along the projection $A\to A_v$
preserves indecomposability, isomorphism classes, and
composition multiplicities. Hence, if $A$ satisfies $\X$, then $A_v$ satisfies $\X$
for every $v$.
Moreover, $A$ is representation-finite if and only if all
the factors $A_v$ are representation-finite.
It therefore suffices to prove the assertion when the
commutative artinian base ring is local.
Write this base ring as $(R,\mathfrak m,k)$, where
$k=R/\mathfrak m$.

Next, we may assume $A$ is basic.
By Lemma~\ref{c2:lem:replacement}, there exists a
finite-dimensional basic $k$-algebra $A^0$ satisfying
$\X$ such that
\[
 A\text{ is representation-finite}
 \quad\Longleftrightarrow\quad
 A^0\text{ is representation-finite}.
\]

If $k$ is infinite, Proposition~\ref{c2:prop:infinite-field}
gives the result.
If $k$ is finite, Proposition~\ref{c2:prop:finite-field}
shows that $A^0$ is representation-finite, and Lemma~\ref{c2:lem:replacement} gives the same conclusion for $A$.
\end{proof}

\bigskip
\noindent\textbf{Acknowledgements.} ChatGPT assisted the authors in exploring ideas and organizing some of the arguments during the preparation of this
paper.

\Needspace{12\baselineskip}

\bigskip
\noindent\begin{minipage}{\textwidth}
\normalsize
\setlength{\parindent}{1.5em}
\textsc{Liang Chen: School of Mathematical Sciences, Capital Normal
University, 100048 Beijing, China}\par
\textit{E-mail address:}
\href{mailto:2210501003@cnu.edu.cn}{\texttt{2210501003@cnu.edu.cn}}\par
\end{minipage}

\bigskip
\noindent\begin{minipage}{\textwidth}
\normalsize
\setlength{\parindent}{1.5em}
\textsc{Zhenxian Chen: School of Mathematical Sciences, Capital Normal
University, 100048 Beijing, China}\par
\textit{E-mail address:}
\href{mailto:czx18366459216@163.com}{\texttt{czx18366459216@163.com}}\par
\end{minipage}

\bigskip
\noindent\begin{minipage}{\textwidth}
\normalsize
\setlength{\parindent}{1.5em}
\textsc{Yongyi Wen: School of Mathematical Sciences, Capital Normal
University, 100048 Beijing, China}\par
\textit{E-mail address:}
\href{mailto:2240501002@cnu.edu.cn}{\texttt{2240501002@cnu.edu.cn}}\par
\end{minipage}

\bigskip
\noindent\begin{minipage}{\textwidth}
\normalsize
\setlength{\parindent}{1.5em}
\textsc{Weiheng Xia: School of Mathematical Sciences, Capital Normal
University, 100048 Beijing, China}\par
\textit{E-mail address:}
\href{mailto:xia.weiheng@qq.com}{\texttt{xia.weiheng@qq.com}}
\end{minipage}

\end{document}